\newcommand{\tnorm}{\@ifstar\@tnorms\@tnorm}
\newcommand{\@tnorms}[1]{%
  \left|\mkern-1.5mu\left|\mkern-1.5mu\left|
   #1
  \right|\mkern-1.5mu\right|\mkern-1.5mu\right|
}
\newcommand{\@tnorm}[2][]{%
  \mathopen{#1|\mkern-1.5mu#1|\mkern-1.5mu#1|}
  #2
  \mathclose{#1|\mkern-1.5mu#1|\mkern-1.5mu#1|}
}
\newtheorem{theorem}{Theorem}[section]
\newtheorem{lemma}[theorem]{Lemma}
\newtheorem{remark}{Remark}
\numberwithin{equation}{section}
\title{Space-time hybridizable
  discontinuous Galerkin method for advection-diffusion on deforming
  domains: \\The advection-dominated regime}
\author{Y. Wang\thanks{Department of Applied Mathematics, University of
    Waterloo, ON, Canada (\url{yuan.wang@uwaterloo.ca}),
    \url{http://orcid.org/0009-0006-8092-4378}
 } \and
  S. Rhebergen\thanks{Department of Applied Mathematics, University of
    Waterloo, ON, Canada (\url{srheberg@uwaterloo.ca}),
    \url{http://orcid.org/0000-0001-6036-0356}}}
\begin{document}
%------------------------------------------------------------------------------
\maketitle
%------------------------------------------------------------------------------
\begin{abstract}
	We analyze a space-time hybridizable discontinuous Galerkin
	method to solve the time-dependent advection-diffusion equation on
	deforming domains. We prove stability of the discretization in the
	advection-dominated regime by using weighted test functions and
	derive a priori space-time error estimates. A numerical example
	illustrates the theoretical results.
\end{abstract}
%------------------------------------------------------------------------------
\section{Introduction}
\label{sec:introduction}

In this paper we analyze a space-time hybridizable discontinuous
Galerkin (HDG) method for the time-dependent advection-diffusion
problem on a time-dependent polygonal $(d=2$) or polyhedral ($d=3$)
domain $\Omega(t) \subset\mathbb{R}^d$, that evolves continuously in
the time interval $t\in\sbr{0,T}$. This problem is given by
\begin{equation}
  \label{eq:adr}
  \partial_t u
  +
  \overline{\nabla}\cdot\del[0]{\bar{\beta}u}
  -
  \varepsilon\overline{\nabla}^2u
  =f
  \quad
  \text{in }
  \Omega(t),
  \;
  0< t\leq T,
\end{equation}
in which
$\overline{\nabla} = \del[0]{\partial_{x_1}, \partial_{x_2}, \dots,
  \partial_{x_d}}$ denotes the spatial gradient, $\bar{\beta}$ is the
given divergence-free advective field, $\varepsilon>0$ is the constant
diffusion coefficient, and $f$ is a forcing term. The focus in this
paper is the advection-dominated regime ($\varepsilon \ll 1$).

Many different finite element methods have been designed and analyzed
for advection-diffusion problems. These include, for example,
streamline upwind Petrov--Galerkin (SUPG)
\citep{Brooks:1982,Burman:2010,John:2011,Novo:2016}, continuous
interior penalty (CIP) \citep{Burman:2004,Burman:2005,Burman:2009},
discontinuous Galerkin (DG)
\citep{Cockburn:1998,Cockburn:2001,Ern:2006,Reed:1973}, and HDG
\citep{Chen:2012,Egger:2010,Nguyen:2009,Wells:2011} methods. Of
particular interest to the current work is the analysis presented by
\citet{Ayuso:2009}. They introduce a weighted test function to analyze
a DG method for the stationary advection-diffusion-reaction
problem. For this they assume that $\bar{\beta}$ has no closed curves
nor stationary points which implies the existence of a smooth function
$\psi$ such that $\bar{\beta} \cdot \overline{\nabla} \psi(x) \ge b_0$
for some constant $b_0 > 0$ depending on the inverse of the diameter
of the domain. The function $\psi$ is used to define a weighting
function $\varphi := \exp(-\psi) + \chi$, with $\chi$ a free to choose
positive constant, which is used to show stability of the DG method.
They are able to show stability for the advection-diffusion-reaction
problem independent of the diffusion parameter $\varepsilon$, and
therefore stability of the DG method in the advection-dominated
regime, in a norm that provides control of the streamline
derivative. Furthermore, their analysis also holds for the
advection-diffusion problem with solenoidal advective field, thereby
relaxing the usual coercivity condition that
$\mu(x) + \tfrac{1}{2}\overline{\nabla}\cdot\bar{\beta} > 0$, where
$\mu(x)$ is the reaction coefficient. The same idea is also used to
analyze the HDG method for the stationary advection-diffusion problem
in the advection-dominated regime in \citet{Fu:2015}.

To discretize the advection-diffusion problem on a time-dependent
domain we consider a fully discrete space-time formulation of
\cref{eq:adr} using DG time stepping \citep{Jamet:1975,Jamet:1978}. DG
time stepping can be combined with different spatial
discretizations. On fixed domains, for example, DG time stepping
combined with SUPG was analyzed for the advection-diffusion equation
in \citet{Hughes:1987}, while space-time DG, in which DG is applied
both in space and time, was analyzed for a nonlinear
advection-diffusion problem in \citet{Feistauer:2011}. The space-time
DG method for the (linear) advection-diffusion problem on a
time-dependent domain was analyzed in \citet{Sudirham:2006} by
considering the space-time discretization on a space-time mesh
consisting of anisotropic (in space and time) elements. This enabled
them to obtain error estimates in terms of the spatial mesh size and
the time step, extending the analysis of DG methods for the stationary
advection-diffusion problem on anisotropic spatial meshes
\citep{Georgoulis:thesis} to space-time.

DG methods are known to be expensive due to the large number of
degrees-of-freedom (dofs) compared to, for example, a continuous
finite element method on the same mesh. This resulted in the
introduction of the HDG method by \citet{Cockburn:2009}, a DG method
that uses static condensation to reduce the number of globally coupled
dofs. The extension of HDG to space-time, in which HDG is used to
discretize a PDE in both space and time, was presented in
\citep{Rhebergen:2012,Rhebergen:2013}. Furthermore, it was
demonstrated in \citet{Sivas:2021} that nonsymmetric algebraic
multigrid, based on approximate ideal restriction
\citep{Manteuffel:2019, Manteuffel:2018}, is an effective
preconditioner for space-time HDG discretizations of the
advection-diffusion problem in the advection-dominated regime.

The space-time HDG method for \cref{eq:adr} was analyzed in
\citet{Kirk:2019} following the space-time anisotropic framework used
in \citet{Sudirham:2006}. However, despite the space-time HDG method
performing well in practice for $\varepsilon \ll 1$, the
well-posedness result proven in \citet{Kirk:2019} does not hold in the
advection dominated regime. To address this discrepancy between
practice and theory, we revisit the analysis in \citet{Kirk:2019} and
focus on its extension to the advection-dominated regime. Like the
analysis of \citet{Ayuso:2009} discussed above, we will use a weighted
test function. However, our weighting function is constructed
explicitly for the time-dependent advection-diffusion equation and
only depends on the time variable $t$ and final time $T$, not on the
space variable $x$. We prove stability of the space-time HDG method in
a norm providing control on the streamline derivative and present
error estimates for the advection-dominated regime that are also valid
on moving meshes.

The paper is organized as follows. In \cref{s:advecdiffuprob}, we
introduce the space-time formulation of the time-dependent
advection-diffusion problem. In \cref{s:spacetimehdg} we describe the
finite element spaces, present inequalities that will be used in the
analysis, and the space-time HDG discretization of the
advection-diffusion problem. The main result of \cref{s:stability} is
inf-sup stability of the discretization. The a priori error analysis
is then presented in \cref{s:conv_anal}. We provide pre-asymptotic and
asymptotic convergence rates, with the transition from the former to
the latter featuring a drop in the rate of convergence from $p+1/2$ to
$p$, with $p$ the degree of the polynomial approximation. In
\cref{s:numerics}, we numerically confirm the error analysis by
solving a time-dependent advection-diffusion problem on a deforming
mesh that contains hanging nodes in both spatial and temporal
directions.

%------------------------------------------------------------------------------
\section{The space-time formulation of the advection-diffusion problem}
\label{s:advecdiffuprob}

In this section we formulate \cref{eq:adr} as a problem in
$(d+1)$-dimensional space-time. For this, we define the
$(d+1)$-dimensional polyhedral space-time domain as
$\mathcal{E}:=\{ (t,x): x\in\Omega(t), 0<t<T
\}\subset\mathbb{R}^{d+1}$. Its boundary, $\partial\mathcal{E}$,
consists of $\Omega(0):=\{(t,x) \in \partial\mathcal{E}\,:\, t=0\}$,
$\Omega(T):=\{(t,x) \in \partial\mathcal{E}\,:\, t=T\}$, and
$\mathcal{Q}_{\mathcal{E}}:=\{(t,x) \in \partial\mathcal{E}\,:\,
0<t<T\}$. The outward space-time normal vector to
$\partial\mathcal{E}$ is denoted by ${n}:=\del[0]{n_t,\bar{n}}$, where
$n_t$ and $\bar{n}$ are the temporal and spatial components of the
space-time normal vector, respectively. Introducing the space-time
advective field $\beta:=\del[0]{1,\bar{\beta}}$ and space-time
gradient operator $\nabla:=\del[0]{\partial_t,\overline{\nabla}}$, the
space-time formulation of \cref{eq:adr} is given by
\begin{subequations}
  \label{eq:st_adr}
  \begin{equation}
    \nabla\cdot\del{\beta u}
    -\varepsilon\overline{\nabla}^2u
    =f
    \text{ in }
    \mathcal{E}.
  \end{equation}
  We consider a nonoverlapping partition of the domain boundary,
  $\partial\mathcal{E} = \partial\mathcal{E}_D \cup
  \partial\mathcal{E}_N$, and impose the boundary conditions
  \begin{equation}
    -\zeta^- u \beta \cdot{n}
    +\varepsilon\overline{\nabla}u\cdot\bar{n}
    =g
    \text{ on }
    \partial\mathcal{E}_N,
    \qquad
    u=0
    \text{ on }
    \partial\mathcal{E}_D.
  \end{equation}
\end{subequations}
The Dirichlet $\partial\mathcal{E}_D$ and Neumann
$\partial\mathcal{E}_N$ boundaries are defined here by:
\begin{equation*}
  \partial\mathcal{E}_D
  :=
  \cbr[0]{
    \del[0]{t,x}:
    x\in\Gamma_D(t),
    0<t\leq T
  },
  \qquad
  \partial\mathcal{E}_N
  :=
  \cbr[0]{
    \del[0]{t,x}:
    x\in\Gamma_N(t)\cup\Omega(0)\cup\Omega(T),
    0<t\leq T
  },
\end{equation*}
where we also prescribe a nonoverlapping partition of the boundary of
$\Omega(t)$, i.e.,
$\partial\Omega(t)=\Gamma_D(t)\cup\Gamma_N(t)$. Furthermore, $\zeta^-$
is an indicator function for the inflow (where $\beta \cdot n < 0$)
part of the boundary of $\mathcal{E}$. Therefore, the boundary
condition on $\partial\mathcal{E}_N$ also imposes the initial
condition $u(x,0)=g(x)$ on $\Omega(0)$.

We assume that the forcing term $f$ lies in $L^2(\mathcal{E})$ and
that the Neumann boundary data $g$ lies in
$L^2(\partial\mathcal{E}_N)$. Furthermore, we assume that
$\bar{\beta} \in \sbr[0]{W^{1,\infty}(\mathcal{E})}^{d}$,
$\norm[0]{\bar{\beta}}_{L^{\infty}(\mathcal{E})}\le 1$ and, following
\citet{Ayuso:2009}, that
$\norm[0]{\bar{\beta}}_{W^{1,\infty}(\mathcal{E})}\leq
c\norm[0]{\bar{\beta}}_{L^{\infty}(\mathcal{E})}\le c$.

%------------------------------------------------------------------------------
\section{The space-time hybridizable discontinuous Galerkin method}
\label{s:spacetimehdg}

%------------------------------------------------------------------------------
\subsection{Description of space-time slabs, faces, and elements}
\label{ss:description-stslabsfaceselements}

An initial partition of the space-time domain $\mathcal{E}$ consists
of dividing the time interval $\sbr[0]{0,T}$ into time levels
$0=t_0<t_1<\cdots<t_N=T$ and defining the $n$th time interval as
$I_n=\del[0]{t_n,t_{n+1}}$. The space-time domain is divided into
space-time slabs
$\mathcal{E}^n := \mathcal{E}\cap\del[0]{I_n\times\mathbb{R}^d}$,
which are then divided into space-time elements,
$\mathcal{E}^n=\cup_j\mathcal{K}^n_j$. To construct the space-time
element $\mathcal{K}^n_j$, we divide the domain $\Omega(t_n)$ into
nonoverlapping spatial elements $K^n_j$ so that
$\Omega(t_n)=\cup_jK^n_j$. Let $\Upsilon$ be the transformation
describing the deformation of the domain. The spatial elements
$K^{n+1}_j$ at $t_{n+1}$ are obtained by mapping the nodes of the
elements $K^n_j$ into their new position via the transformation
$\Upsilon$. Each space-time element $\mathcal{K}^n_j$ is obtained by
connecting the elements $K^n_j$ and $K^{n+1}_j$ via linear
interpolation in time following \citet{Vegt:2002}. We denote the set of
all space-time elements tessellating the space-time domain by
$\mathcal{T}_h$.

The boundary of a space-time element $\mathcal{K}$ with
$t \in (t_*,t^*)$ is partitioned as
$\partial \mathcal{K} = \mathcal{Q}_{\mathcal{K}} \cup
\mathcal{R}_{\mathcal{K}}$ where
$\mathcal{R}_{\mathcal{K}} := K_* \cup K^*$,
$\mathcal{Q}_{\mathcal{K}} \cap \mathcal{R}_{\mathcal{K}} =
\emptyset$, and where $K_*$ denotes the face of $\mathcal{K}$ at time
$t=t_*$ and $K^*$ denotes the face of $\mathcal{K}$ at time $t^*$. On
$\partial\mathcal{K}$, the outward unit space-time normal vector is
denoted by
$n^{\mathcal{K}} = \del[0]{n_t^{\mathcal{K}},\bar{n}^{\mathcal{K}}}$,
where $n_t^{\mathcal{K}}$ and $\bar{n}^{\mathcal{K}}$ are the temporal
and spatial components of the space-time normal vector,
respectively. Note that $\bar{n}$ is the zero vector on an
$\mathcal{R}$-face, i.e., that $\bar{n}=0$ on $K^*$ and $K_*$, and
that $\bar{n} \ne 0$ on a $\mathcal{Q}$-face.

The set of all faces in the mesh shared between at most two elements
is denoted by $\mathcal{F}_h$. Within this set, we denote by
$\mathcal{F}_h^i$, $\mathcal{F}_h^b$, $\mathcal{F}_{\mathcal{Q},h}$,
and $\mathcal{F}_{\mathcal{R},h}$ all the interior faces, boundary
faces, $\mathcal{Q}$-faces, and $\mathcal{R}$-faces respectively. When
the mesh is viewed from an element's perspective, we denote by
$\partial\mathcal{T}_h$ the set of element boundaries. Within this
set, we denote by $\mathcal{Q}_h$ and $\mathcal{R}_h$ the set of
element boundary $\mathcal{Q}$-faces and element boundary
$\mathcal{R}$-faces, respectively. Finally, the union of all faces in
$\mathcal{F}_h$ is denoted by $\Gamma$.  An illustration of a
$(d+1)$-dimensional space-time mesh in slab $\mathcal{E}^n$, with
$d=2$, is shown in \cref{fig:st_dom_ex}.
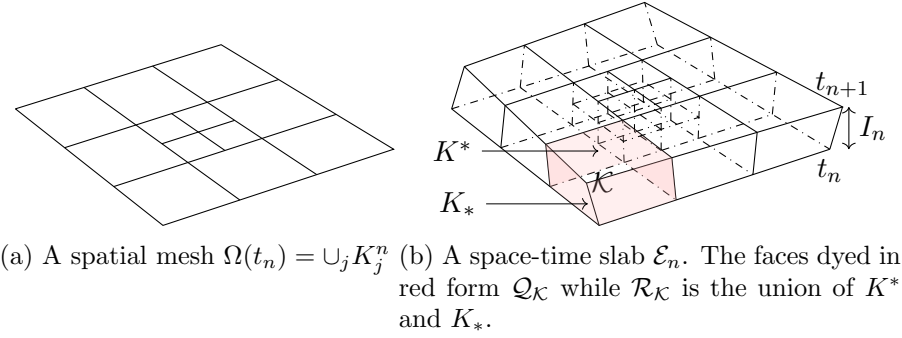
\begin{figure}[tbp]
  \centering
  \subfloat[A spatial mesh $\Omega(t_n)=\cup_{j}K_j^n$]{
    \tikzmath{
      % coordinates numbering
      % starting from top left corner, ccw
      % outer quad:
      % 1, 5, 9, 13, 14, 15, 16, 12, 8, 4, 3, 2
      % inner quad:
      % 17, 10, 19, 7, 20, 11, 18, 6
      % center node:
      % 21.
      %
      coordinate \s{01}, \s{02}, \s{03}, \s{04};
      \s{01} = (0,0);
      \s{04} = (3,-0.5);
      \s{13} = (0.8,-2.7);
      \s{16} = (4.2,-3.2);
      coordinate \x{01}, \x{02}, \x{03}, \x{04};
      coordinate \x{05}, \x{06}, \x{07}, \x{08};
      coordinate \x{09}, \x{10}, \x{11}, \x{12};
      coordinate \x{13}, \x{14}, \x{15}, \x{16};
      \a = 30; % rotation angle
      \t = 1.2; % horizontal stretching
      \x{01} =
      ({\t*(\sx{01}*cos(\a)-\sy{01}*sin(\a))},{\sx{01}*sin(\a)+\sy{01}*cos(\a)});
      \x{04} =
      ({\t*(\sx{04}*cos(\a)-\sy{04}*sin(\a))},{\sx{04}*sin(\a)+\sy{04}*cos(\a)});
      \x{13} =
      ({\t*(\sx{13}*cos(\a)-\sy{13}*sin(\a))},{\sx{13}*sin(\a)+\sy{13}*cos(\a)});
      \x{16} =
      ({\t*(\sx{16}*cos(\a)-\sy{16}*sin(\a))},{\sx{16}*sin(\a)+\sy{16}*cos(\a)});
      \x{02} = ({(2*\xx{01}+\xx{04})/3},{(2*\xy{01}+\xy{04})/3});
      \x{14} = ({(2*\xx{13}+\xx{16})/3},{(2*\xy{13}+\xy{16})/3});
      \x{03} = ({(2*\xx{04}+\xx{01})/3},{(2*\xy{04}+\xy{01})/3});
      \x{15} = ({(2*\xx{16}+\xx{13})/3},{(2*\xy{16}+\xy{13})/3});
      \x{05} = ({(2*\xx{01}+\xx{13})/3},{(2*\xy{01}+\xy{13})/3});
      \x{08} = ({(2*\xx{04}+\xx{16})/3},{(2*\xy{04}+\xy{16})/3});
      \x{09} = ({(2*\xx{13}+\xx{01})/3},{(2*\xy{13}+\xy{01})/3});
      \x{12} = ({(2*\xx{16}+\xx{04})/3},{(2*\xy{16}+\xy{04})/3});
      \x{06} = ({(\xx{05}+\xx{08})/2},{(\xy{05}+\xy{08})/2});
      \x{07} = ({(\xx{09}+\xx{12})/2},{(\xy{09}+\xy{12})/2});
      \x{10} = ({(\xx{02}+\xx{14})/2},{(\xy{02}+\xy{14})/2});
      \x{11} = ({(\xx{03}+\xx{15})/2},{(\xy{03}+\xy{15})/2});
    }
    \begin{tikzpicture}[scale=0.8]
      \draw[] (\x{01}) -- (\x{04}) -- (\x{16}) -- (\x{13}) -- (\x{01});
      \draw[] (\x{02}) -- (\x{14});
      \draw[] (\x{03}) -- (\x{15});
      \draw[] (\x{05}) -- (\x{08});
      \draw[] (\x{09}) -- (\x{12});
      \draw[] (\x{06}) -- (\x{07});
      \draw[] (\x{10}) -- (\x{11});
    \end{tikzpicture}
  }
  \subfloat[A space-time slab $\mathcal{E}_n$. The
  faces dyed in red form $\mathcal{Q}_{\mathcal{K}}$ while
  $\mathcal{R}_{\mathcal{K}}$ is the union of $K^*$ and $K_*$.
  \label{fig:localtimestep}
  ]{
    \tikzmath{
      coordinate \s{01}, \s{02}, \s{03}, \s{04};
      \s{01} = (0,0);
      \s{04} = (3,-0.5);
      \s{13} = (0.8,-2.7);
      \s{16} = (4.2,-3.2);
      coordinate \x{01}, \x{02}, \x{03}, \x{04};
      coordinate \x{05}, \x{06}, \x{07}, \x{08};
      coordinate \x{09}, \x{10}, \x{11}, \x{12};
      coordinate \x{13}, \x{14}, \x{15}, \x{16};
      coordinate \x{17}, \x{18}, \x{19}, \x{20}, \x{21};
      coordinate \x{22}; % center of K_*
      coordinate \x{23}; % center of Q-face
      \a = 30; % rotation angle
      \t = 1.2; % horizontal stretching
      \x{01} =
      ({\t*(\sx{01}*cos(\a)-\sy{01}*sin(\a))},{\sx{01}*sin(\a)+\sy{01}*cos(\a)});
      \x{04} =
      ({\t*(\sx{04}*cos(\a)-\sy{04}*sin(\a))},{\sx{04}*sin(\a)+\sy{04}*cos(\a)});
      \x{13} =
      ({\t*(\sx{13}*cos(\a)-\sy{13}*sin(\a))},{\sx{13}*sin(\a)+\sy{13}*cos(\a)});
      \x{16} =
      ({\t*(\sx{16}*cos(\a)-\sy{16}*sin(\a))},{\sx{16}*sin(\a)+\sy{16}*cos(\a)});
      \x{02} = ({(2*\xx{01}+\xx{04})/3},{(2*\xy{01}+\xy{04})/3});
      \x{14} = ({(2*\xx{13}+\xx{16})/3},{(2*\xy{13}+\xy{16})/3});
      \x{03} = ({(2*\xx{04}+\xx{01})/3},{(2*\xy{04}+\xy{01})/3});
      \x{15} = ({(2*\xx{16}+\xx{13})/3},{(2*\xy{16}+\xy{13})/3});
      \x{05} = ({(2*\xx{01}+\xx{13})/3},{(2*\xy{01}+\xy{13})/3});
      \x{08} = ({(2*\xx{04}+\xx{16})/3},{(2*\xy{04}+\xy{16})/3});
      \x{09} = ({(2*\xx{13}+\xx{01})/3},{(2*\xy{13}+\xy{01})/3});
      \x{12} = ({(2*\xx{16}+\xx{04})/3},{(2*\xy{16}+\xy{04})/3});
      \x{06} = ({(\xx{05}+\xx{08})/2},{(\xy{05}+\xy{08})/2});
      \x{07} = ({(\xx{09}+\xx{12})/2},{(\xy{09}+\xy{12})/2});
      \x{10} = ({(\xx{02}+\xx{14})/2},{(\xy{02}+\xy{14})/2});
      \x{11} = ({(\xx{03}+\xx{15})/2},{(\xy{03}+\xy{15})/2});
      \x{17} = ({(2*\xx{02}+\xx{14})/3},{(2*\xy{02}+\xy{14})/3});
      \x{19} = ({(2*\xx{14}+\xx{02})/3},{(2*\xy{14}+\xy{02})/3});
      \x{18} = ({(2*\xx{03}+\xx{15})/3},{(2*\xy{03}+\xy{15})/3});
      \x{20} = ({(2*\xx{15}+\xx{03})/3},{(2*\xy{15}+\xy{03})/3});
      \x{21} = ({(\xx{10}+\xx{11})/2},{(\xy{10}+\xy{11})/2});
      \x{22} = ({(\xx{09}+\xx{14})/2},{(\xy{09}+\xy{14})/2});
      coordinate \y{01}, \y{02}, \y{03}, \y{04};
      coordinate \y{05}, \y{06}, \y{07}, \y{08};
      coordinate \y{09}, \y{10}, \y{11}, \y{12};
      coordinate \y{13}, \y{14}, \y{15}, \y{16};
      coordinate \y{17}, \y{18}, \y{19}, \y{20}, \y{21};
      coordinate \y{22}; % center of K^*
      coordinate \y{23}; % center of Q-face
      \y{01} = ({\xx{01}+6},{\xy{01}+20});
      \y{04} = ({\xx{04}-6},{\xy{04}+20});
      \y{13} = ({\xx{13}-6},{\xy{13}+20});
      \y{16} = ({\xx{16}+6},{\xy{16}+20});
      \y{02} = ({(2*\yx{01}+\yx{04})/3},{(2*\yy{01}+\yy{04})/3});
      \y{14} = ({(2*\yx{13}+\yx{16})/3},{(2*\yy{13}+\yy{16})/3});
      \y{03} = ({(2*\yx{04}+\yx{01})/3},{(2*\yy{04}+\yy{01})/3});
      \y{15} = ({(2*\yx{16}+\yx{13})/3},{(2*\yy{16}+\yy{13})/3});
      \y{05} = ({(2*\yx{01}+\yx{13})/3},{(2*\yy{01}+\yy{13})/3});
      \y{08} = ({(2*\yx{04}+\yx{16})/3},{(2*\yy{04}+\yy{16})/3});
      \y{09} = ({(2*\yx{13}+\yx{01})/3},{(2*\yy{13}+\yy{01})/3});
      \y{12} = ({(2*\yx{16}+\yx{04})/3},{(2*\yy{16}+\yy{04})/3});
      \y{06} = ({(\yx{05}+\yx{08})/2},{(\yy{05}+\yy{08})/2});
      \y{07} = ({(\yx{09}+\yx{12})/2},{(\yy{09}+\yy{12})/2});
      \y{10} = ({(\yx{02}+\yx{14})/2},{(\yy{02}+\yy{14})/2});
      \y{11} = ({(\yx{03}+\yx{15})/2},{(\yy{03}+\yy{15})/2});
      \y{17} = ({(2*\yx{02}+\yx{14})/3},{(2*\yy{02}+\yy{14})/3});
      \y{19} = ({(2*\yx{14}+\yx{02})/3},{(2*\yy{14}+\yy{02})/3});
      \y{18} = ({(2*\yx{03}+\yx{15})/3},{(2*\yy{03}+\yy{15})/3});
      \y{20} = ({(2*\yx{15}+\yx{03})/3},{(2*\yy{15}+\yy{03})/3});
      \y{21} = ({(\yx{10}+\yx{11})/2},{(\yy{10}+\yy{11})/2});
      \y{22} = ({(\yx{09}+\yx{14})/2},{(\yy{09}+\yy{14})/2});
      \x{23} = ({(\xx{13}+\yx{14})/2},{(\xy{13}+\yy{14})/2});
      %
      % local time stepping
      coordinate \z{17}, \z{10}, \z{19}, \z{07};
      coordinate \z{20}, \z{11}, \z{18}, \z{06};
      coordinate \z{21};
      \z{06} = ({(\xx{06}+\yx{06})/2},{(\xy{06}+\yy{06})/2});
      \z{07} = ({(\xx{07}+\yx{07})/2},{(\xy{07}+\yy{07})/2});
      \z{10} = ({(\xx{10}+\yx{10})/2},{(\xy{10}+\yy{10})/2});
      \z{11} = ({(\xx{11}+\yx{11})/2},{(\xy{11}+\yy{11})/2});
      \z{17} = ({(\xx{17}+\yx{17})/2},{(\xy{17}+\yy{17})/2});
      \z{18} = ({(\xx{18}+\yx{18})/2},{(\xy{18}+\yy{18})/2});
      \z{19} = ({(\xx{19}+\yx{19})/2},{(\xy{19}+\yy{19})/2});
      \z{20} = ({(\xx{20}+\yx{20})/2},{(\xy{20}+\yy{20})/2});
      \z{21} = ({(\xx{21}+\yx{21})/2},{(\xy{21}+\yy{21})/2});
    }
    \begin{tikzpicture}[scale=0.8]
      \draw[dash dot] (\x{01}) -- (\x{04}) -- (\x{16});
      \draw[] (\x{16}) -- (\x{13}) -- (\x{01});
      \draw[dash dot] (\x{02}) -- (\x{14});
      \draw[dash dot] (\x{03}) -- (\x{15});
      \draw[dash dot] (\x{05}) -- (\x{08});
      \draw[dash dot] (\x{09}) -- (\x{12});
      \draw[dash dot] (\x{06}) -- (\x{07});
      \draw[dash dot] (\x{10}) -- (\x{11});
      \draw[] (\y{01}) -- (\y{04}) -- (\y{16}) -- (\y{13}) -- (\y{01});
      \draw[] (\y{02}) -- (\y{14});
      \draw[] (\y{03}) -- (\y{15});
      \draw[] (\y{05}) -- (\y{08});
      \draw[] (\y{09}) -- (\y{12});
      \draw[] (\y{06}) -- (\y{07});
      \draw[] (\y{10}) -- (\y{11});
      \draw[] (\x{01}) -- (\y{01});
      \draw[dash dot] (\x{02}) -- (\y{02});
      \draw[dash dot] (\x{03}) -- (\y{03});
      \draw[dash dot] (\x{04}) -- (\y{04});
      \draw[] (\x{05}) -- (\y{05});
      \draw[dash dot] (\x{06}) -- (\y{06});
      \draw[dash dot] (\x{07}) -- (\y{07});
      \draw[dash dot] (\x{08}) -- (\y{08});
      \draw[] (\x{09}) -- (\y{09});
      \draw[dash dot] (\x{10}) -- (\y{10});
      \draw[dash dot] (\x{11}) -- (\y{11});
      \draw[dash dot] (\x{12}) -- (\y{12});
      \draw[] (\x{13}) -- (\y{13});
      \draw[] (\x{14}) -- (\y{14});
      \draw[] (\x{15}) -- (\y{15});
      \draw[] (\x{16}) -- (\y{16});
      \draw[dash dot] (\x{17}) -- (\y{17});
      \draw[dash dot] (\x{18}) -- (\y{18});
      \draw[dash dot] (\x{19}) -- (\y{19});
      \draw[dash dot] (\x{20}) -- (\y{20});
      \draw[dash dot] (\x{21}) -- (\y{21});
      % time step I_n
      % \draw[] (\xx{16},\xy{16}) -- (\yx{16}+7,\xy{16});
      % \draw[] (\yx{16},\yy{16}) -- (\yx{16}+7,\yy{16});
      \draw[<->] (\yx{16}+3,\xy{16}) -- (\yx{16}+3,\yy{16}) node[midway,right]{$I_n$};
      \node[below] at (\x{16}) {$t_{n}$};
      \node[above] at (\y{16}) {$t_{n+1}$};
      % K
      \node[below left] at (\x{19}) {$\mathcal{K}$};
      % K's Q-faces
      \filldraw[fill=red!30,fill opacity=0.2,draw opacity=0] (\x{09}) --
      (\x{13}) -- (\y{13}) -- (\y{09}) -- (\x{09});
      \filldraw[fill=red!30,fill opacity=0.2,draw opacity=0] (\x{13}) --
      (\x{14}) -- (\y{14}) -- (\y{13}) -- (\x{13});
      \filldraw[fill=red!30,fill opacity=0.2,draw opacity=0] (\x{14}) --
      (\x{19}) -- (\y{19}) -- (\y{14}) -- (\x{14});
      \filldraw[fill=red!30,fill opacity=0.2,draw opacity=0] (\x{19}) --
      (\x{09}) -- (\y{09}) -- (\y{19}) -- (\x{19});
      % K's top and bottom faces
      \draw[very thin,->] (\yx{22}-60,\yy{22})
      node[left]{$K^*$} -- (\yx{22}-5,\yy{22});
      \draw[very thin,->] (\xx{22}-60,\xy{22}-5)
      node[left]{$K_*$} -- (\xx{22}-12,\xy{22}-5) ;
      % local time-stepping
      \draw[dash dot] (\z{17}) -- (\z{10}) --
			(\z{19}) -- (\z{07}) --
			(\z{20}) -- (\z{11}) --
			(\z{18}) -- (\z{06}) -- (\z{17});
		\draw[dash dot] (\z{10}) -- (\z{11});
		\draw[dash dot] (\z{06}) -- (\z{07});
    \end{tikzpicture}
  }
  \caption{Illustration of a moving spatial domain
    $\Omega(t) \subset \mathbb{R}^d$ for $t \in I_n$ resulting in the
    space-time slab $\mathcal{E}^n \subset \mathbb{R}^{(d+1)}$ (with
    $d=2$). Local time-stepping within a space-time slab is featured
    in \cref{fig:localtimestep}. Here $K^*=K^{n+1}$ and $K_*=K^n$.}
  \label{fig:st_dom_ex}
\end{figure}

To define the finite element spaces, we require the mapping
$\Phi_{\mathcal{K}}$ between a fixed reference element
$\widehat{\mathcal{K}}=\del[0]{-1,1}^{d+1}$ and space-time element
$\mathcal{K}\in\mathcal{T}_h$. Following \citet{Georgoulis:thesis} and
\citet{Sudirham:2006}, this mapping
$\Phi_{\mathcal{K}}(\widehat{\mathcal{K}}) = \mathcal{K}$ is
decomposed into two parts. First,
$G_{\mathcal{K}}(\widehat{\mathcal{K}}) = \widetilde{\mathcal{K}}$
denotes the affine mapping defined by
$G_{\mathcal{K}}(\widehat{x})=A_{\mathcal{K}}\widehat{x}+b$, where
$A_{\mathcal{K}}=\mathrm{diag}\;\del[0]{{\delta t_{\mathcal{K}}}/2,
  h_K/2,\dots, h_K/2}$ and $b\in\mathbb{R}^{d+1}$ a constant
translation vector such that the brick
$\widetilde{\mathcal{K}}:=(0,h_K)^d\times(0,\delta t_{\mathcal{K}})$,
see \cref{fig:st_element_construction}. In the following, $h_K$ is
used to denote the spatial mesh size and $\delta t_{\mathcal{K}}$ the
time-step. We then define
$\Phi_{\mathcal{K}}:=\phi_{\mathcal{K}}\circ G_{\mathcal{K}}$, where
$\phi_{\mathcal{K}}$ is a diffeomorphism such that
$\phi_{\mathcal{K}}(\widetilde{\mathcal{K}}) = \mathcal{K}$ (see
\cref{fig:st_element_construction}). Note that $G_{\mathcal{K}}$ sets
the size of the element $\mathcal{K}$ while $\phi_{\mathcal{K}}$ sets
its shape. Following \citet{Georgoulis:thesis} and
\citet{Sudirham:2006}, we assume that $\phi_{\mathcal{K}}$ is close to
the identity, i.e., we will assume that $\phi_{\mathcal{K}}$
satisfies:
\begin{equation}
  \label{eq:diffeom_regular}
  c^{-1}
  \leq
  \envert[0]{\det J_{\phi_{\mathcal{K}}}}
  \leq
  c
  ,
  \quad
  \norm[0]{
    \del[0]{J_{\phi_{\mathcal{K}}}}_{ij}
  }_{L^\infty(\widetilde{\mathcal{K}})}
  \leq
  c
  \quad
  0\leq i,j\leq d,
  \quad
  \forall\mathcal{K}\in\mathcal{T}_h,
\end{equation}
where $c$ is a generic constant independent of $h_K$,
$\delta t_{\mathcal{K}}$, $\varepsilon$, and $T$, where
$J_{\phi_{\mathcal{K}}} \in \mathbb{R}^{(n+1)\times (n+1)}$ is the
Jacobian of the diffeomorphism $\phi_{\mathcal{K}}$, and where the
index $0$ denotes the time coordinate direction. Since $t$ only
depends on $\widetilde{t}$,
\begin{equation*}
  \norm[0]{
    \partial_{\widetilde{x}_k} t
  }_{L^\infty(\widetilde{\mathcal{K}})}
  =
  \norm[0]{
    \del{J_{\phi_\mathcal{K}}}_{0k}
  }_{L^\infty(\widetilde{\mathcal{K}})}
  =0,
  \quad
  1\leq k\leq d,
  \quad
  \forall\mathcal{K}\in\mathcal{T}_h.
\end{equation*}
For the inverse of $J_{\phi_\mathcal{K}}$, let
$\det J_{\phi_\mathcal{K}\setminus{mn}}$ denote the $\del[0]{m,n}$
minor of $J_{\phi_\mathcal{K}}$. We will assume that:
\begin{equation}
  \label{eq:diffeom_regular_inv}
  c^{-1}
  \leq
  \envert[0]{\det J^{-1}_{\phi_\mathcal{K}}}
  \leq
  c
  ,
  \quad
  \norm[0]{
    \det
    J_{\phi_\mathcal{K}\setminus{mn}}
  }_{L^\infty(\widetilde{\mathcal{K}})}
  \leq
  c
  ,
  \quad
  \forall\mathcal{K}\in\mathcal{T}_h.
\end{equation}
Let $F_{\mathcal{Q}}^j$ be a $\mathcal{Q}$-face where
$\widetilde{x}_j$ is fixed in its affine domain. The parametrization
of $F_{\mathcal{Q}}^j$, obtained from the restriction of
$\phi_{\mathcal{K}}$ to the boundary of $\widetilde{\mathcal{K}}$
where $\widetilde{x}_j$ is fixed, is denoted by
$\phi_{F_{\mathcal{Q}}}$. Then, \citep[see][Theorem 21.3 and
Definition on page 189]{Munkres:book},
\begin{equation}
  \label{eq:k_surface}
  \int_{F_{\mathcal{Q}}^j} f(x)\dif s
  =
  \int_{\widetilde{F}_{\mathcal{Q}}^j}
  f\del{\phi_{F_{\mathcal{Q}}}(\widetilde{x})}
  \del[1]{
    \det\del[1]{
      \del[0]{
        J_{\phi_\mathcal{K}}^{j}
      }^\intercal
      J_{\phi_\mathcal{K}}^{j}
    }
  }^{1/2}
  \dif \widetilde{s},
\end{equation}
where $J_{\phi_\mathcal{K}}^{j} \in \mathbb{R}^{(n+1)\times n}$ is
obtained by removing the $j^{\text{th}}$ column vector from
$J_{\phi_{\mathcal{K}}}$. We will assume that
\begin{equation}
  \label{eq:diffeom_regular_d}
  c^{-1}
  \leq
  \del[1]{
    \det\del[0]{
      \del[0]{
        J_{\phi_\mathcal{K}}^{i}
      }^\intercal
      J_{\phi_\mathcal{K}}^{i}
    }
  }^{1/2}
  \leq
  c
  ,
  \quad
  0\leq i\leq d.
\end{equation}

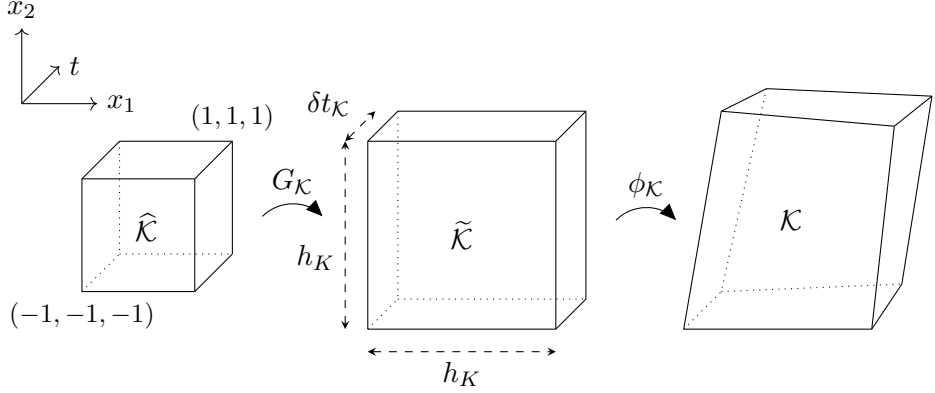
\begin{figure}[tbp]
  \centering
  \tikzmath{
    % coordinate system xyz
    \x0 = 0.2; \y0 = -1.5;
    \x1 = \x0 + 1; \y1 =\y0;
    \x2 = \x0 + 0.5; \y2 =\y0 + 0.5;
    \x3 = \x0; \y3 =\y0 + 1;
    % ========================================
    % reference element K_hat
    % front face index:
    %	starting from left bottom corner,
    %	anti-clockwise,
    %	0, 1, 2, 3;
    % back face index:
    %	4, 5, 6, 7;
    \a0 = 1; \b0 = -4;
    \c0 = 1.5; \c1 = 0.5; % c0 edge length of unit cube; c1 depth
    \a1 = \a0 + \c0; \b1 = \b0;
    \a2 = \a0 + \c0; \b2 = \b0 + \c0;
    \a3 = \a0; \b3 = \b0 + \c0;
    \a4 = \a0 + \c1; \b4 = \b0 + \c1;
    \a5 = \a4 + \c0; \b5 = \b4;
    \a6 = \a4 + \c0; \b6 = \b4 + \c0;
    \a7 = \a4; \b7 = \b4 + \c0;
    \a8 = (3*\a0 + 4*\a1)/7; \b8 = (3*\b0 + 4*\b3)/7;
    % ========================================
    % affine element K_tilde
    \s0 = 4.8; \t0 = -4.5;
    \r0 = 2.5; \r1 = 2.5; \r2 = 0.4; % r0 width; r1 height; r2 depth
    \s1 = \s0 + \r0; \t1 = \t0;
    \s2 = \s0 + \r0; \t2 = \t0 + \r1;
    \s3 = \s0; \t3 = \t0 + \r1;
    \s4 = \s0 + \r2; \t4 = \t0 + \r2;
    \s5 = \s4 + \r0; \t5 = \t4;
    \s6 = \s4 + \r0; \t6 = \t4 + \r1;
    \s7 = \s4; \t7 = \t4 + \r1;
    \s8 = (\s0 + \s1)/2; \t8 = (\t0 + \t3)/2;
    % ========================================
    % physical elemenn K
    % m0, n0: coordinates
    %	of the left bottom node
    %	of the front face
    % l's and h's (1-3),
    %	displacements w.r.t. to previous one;
    % l's and h's (4),
    %	displacements w.r.t. to m0, n0;
    % l's and h's (5-7),
    %	displacements w.r.t. to previous one;
    \m0 = 9; \n0 = -4.5;
    \l1 = 2.5; \h1 = 0;
    \l2 = 0.3; \h2 = 2.7;
    \l3 = -2.3; \h3 = 0.2;
    \l4 = 0.5; \h4 = 0.5;
    \l5 = 2.4; \h5 = 0.1;
    \l6 = 0.4; \h6 = 2.5;
    \l7 = -2.2; \h7 = 0.1;
    \m1 = \m0 + \l1; \n1 = \n0 + \h1;
    \m2 = \m1 + \l2; \n2 = \n1 + \h2;
    \m3 = \m2 + \l3; \n3 = \n2 + \h3;
    \m4 = \m0 + \l4; \n4 = \n0 + \h4;
    \m5 = \m4 + \l5; \n5 = \n4 + \h5;
    \m6 = \m5 + \l6; \n6 = \n5 + \h6;
    \m7 = \m6 + \l7; \n7 = \n6 + \h7;
    \m8 = (3*\m0 + 4*\m1)/7; \n8 = (\n0 + \n3)/2;
    % ========================================
    % mappings
    % a,b: reference
    % s,t: affine
    % m,n: physical
    % F_K
    \p1 = \a5 + 0.4; \q1 = (\b1 + \b6)/2;
    \p2 = \s0 - 0.6; \q2 = (\b1 + \b6)/2;
    % phi_K
    \p3 = \s5 + 0.4; \q3 = (\t1 + \t6)/2;
    \p4 = \m0 - 0.1; \q4 = (\t1 + \t6)/2;
  }
  \begin{tikzpicture}
    % coordinate system xyz
    \draw[->] (\x0,\y0) -- (\x1,\y1) node[right] {$x_1$};
    \draw[->] (\x0,\y0) -- (\x2,\y2) node[right] {$t$};
    \draw[->] (\x0,\y0) -- (\x3,\y3) node[above] {$x_2$};
    % ========================================
    % reference element K_hat
    \draw[] (\a0,\b0) -- (\a1,\b1);
    \draw[] (\a1,\b1) -- (\a2,\b2);
    \draw[] (\a2,\b2) -- (\a3,\b3);
    \draw[] (\a3,\b3) -- (\a0,\b0) node[below] {\small$\del{-1,-1,-1}$};
    \draw[dotted] (\a4,\b4) -- (\a5,\b5);
    \draw[] (\a5,\b5) -- (\a6,\b6) node[above] {\small$\del{1,1,1}$};;
    \draw[] (\a6,\b6) -- (\a7,\b7);
    \draw[dotted] (\a7,\b7) -- (\a4,\b4);
    \draw[] (\a1,\b1) -- (\a5,\b5);
    \draw[] (\a2,\b2) -- (\a6,\b6);
    \draw[] (\a3,\b3) -- (\a7,\b7);
    \draw[dotted] (\a0,\b0) -- (\a4,\b4);
    \node at (\a8, \b8) {$\widehat{\mathcal{K}}$};
    % ========================================
    % affine element K_tilde
    \draw[] (\s0,\t0) -- (\s1,\t1);
    \draw[] (\s1,\t1) -- (\s2,\t2);
    \draw[] (\s2,\t2) -- (\s3,\t3);
    \draw[] (\s3,\t3) -- (\s0,\t0);
    \draw[dotted] (\s4,\t4) -- (\s5,\t5);
    \draw[] (\s5,\t5) -- (\s6,\t6);
    \draw[] (\s6,\t6) -- (\s7,\t7);
    \draw[dotted] (\s7,\t7) -- (\s4,\t4);
    \draw[] (\s1,\t1) -- (\s5,\t5);
    \draw[] (\s2,\t2) -- (\s6,\t6);
    \draw[] (\s3,\t3) -- (\s7,\t7);
    \draw[dotted] (\s0,\t0) -- (\s4,\t4);
    \node at (\s8, \t8) {$\widetilde{\mathcal{K}}$};
    \draw[stealth-stealth,dashed] (\s0, \t0-0.3) -- node[below] {$h_K$} (\s1, \t1-0.3);
    \draw[stealth-stealth,dashed] (\s0-0.3, \t0) -- node[below left] {$h_K$} (\s3-0.3, \t3);
    \draw[stealth-stealth,dashed] (\s3-0.3, \t3) -- node[above left] {${\delta t_{\mathcal{K}}}$} (\s7-0.3, \t7);
    %% vectors for M_K
    % \draw[-{Stealth[scale=1.5]}] (\s0, \t0) -- (\s1, \t1) node[right] {$\mathbf{v}_\mathcal{K}^{x_1}$} ;
    % \draw[-{Stealth[scale=1.5]}] (\s0, \t0) -- (\s3, \t3) node[below right] {$\mathbf{v}_\mathcal{K}^{x_2}$} ;
    % \draw[-{Stealth[scale=1.5]}] (\s0, \t0) -- (\s4, \t4) node[above right] {$\mathbf{v}_\mathcal{K}^t$} ;
    % ========================================
    % physical element K
    \draw[] (\m0,\n0) -- (\m1,\n1);
    \draw[] (\m1,\n1) -- (\m2,\n2);
    \draw[] (\m2,\n2) -- (\m3,\n3);
    \draw[] (\m3,\n3) -- (\m0,\n0);
    \draw[dotted] (\m4,\n4) -- (\m5,\n5);
    \draw[] (\m5,\n5) -- (\m6,\n6);
    \draw[] (\m6,\n6) -- (\m7,\n7);
    \draw[dotted] (\m7,\n7) -- (\m4,\n4);
    \draw[] (\m1,\n1) -- (\m5,\n5);
    \draw[] (\m2,\n2) -- (\m6,\n6);
    \draw[] (\m3,\n3) -- (\m7,\n7);
    \draw[dotted] (\m0,\n0) -- (\m4,\n4);
    \node at (\m8, \n8) {${\mathcal{K}}$};
    % ========================================
    % mappings
    \draw[>=triangle 45,->] (\p1, \q1) to [out=45, in=135]
    node[above] {${G}_{\mathcal{K}}$} (\p2, \q2);
    \draw[>=triangle 45,->] (\p3, \q3) to [out=45, in=135]
    node[above] {$\phi_\mathcal{K}$} (\p4, \q4);
  \end{tikzpicture}
  \caption{ Construction of the space-time element $\mathcal{K}$
    through an affine mapping
    ${G}_{\mathcal{K}}:\widehat{\mathcal{K}}\rightarrow\widetilde{\mathcal{K}}$
    and a diffeomorphism
    $\phi_\mathcal{K}:\widetilde{\mathcal{K}}\rightarrow\mathcal{K}$
    \citep{Sudirham:2006}. Note that the front and back faces of
    $\mathcal{K}$ have constant $t$-coordinate and hence are parallel
    to each other.}
  \label{fig:st_element_construction}
\end{figure}

We will assume that $\delta t_{\mathcal{K}} \leq h_K$. A general bound
$\delta t_{\mathcal{K}}\leq\mathcal{O}\del[0]{h_K}$ poses no
difficulty to our analysis but is avoided for the ease of
presentation. Our spatial mesh on $\Omega(t)$, for $0 \le t \le T$, is
shape-regular and we allow 1-irregularly refined space-time elements,
i.e., we allow at most one level of refinement difference between
neighboring space-time elements. It will also be useful to introduce a
second time-step, $\Delta t_{\mathcal{K}}$, which is the time-step
size of the space-time slab to which $\mathcal{K}$ belongs. When
performing uniform time-stepping, the ratio
$\Delta t_{\mathcal{K}}/\delta t_{\mathcal{K}} \equiv 1$ for all
$\mathcal{K} \in \mathcal{T}_h$. When using local time-stepping, we
will assume that the ratio between the time-step of a space-time slab
and the minimum local time-step size in that slab is bounded:
$\Delta t_{\mathcal{K}} / \delta t_{\mathcal{K}} \leq c$ for all
$\mathcal{K}\in\mathcal{T}_h$.

%------------------------------------------------------------------------------
\subsection{Spaces and useful inequalities}
\label{ss:approxspaceswf}

Let $\partial^\alpha_xv$, with $\alpha$ a multi-index, be the weak
derivative of $v$ and let
$H^s({U}):=\{v\in L^2({U}) : \partial^\alpha_x v\in L^2({U}) \text{
  for } \envert[0]{\alpha}\leq s \}$, where $s$ is a nonnegative integer
and ${U}\subset\mathbb{R}^r$ is an open domain with
$x:=\del[0]{x_1,\dots,x_r}$ denoting the coordinates of
$\mathbb{R}^r$. The norm of $H^s({U})$ is defined by
$\norm[0]{v}^2_{H^s({U})} := \sum_{\envert[0]{\alpha}\leq
  s}\norm[0]{\partial^\alpha_x v}_{{U}}^2$, where
$\norm[0]{\cdot}_{{U}}$ is the usual $L^2$-norm on ${U}$.

We also require anisotropic Sobolev spaces. Following
\citet{Sudirham:2006} we only consider anisotropy between spatial and
temporal variables with no anisotropy between the spatial
variables. As such, let $s_s$ and $s_t$ denote the spatial and
temporal Sobolev indices, respectively. For
$\alpha_t,\alpha_{s_i}\geq 0$, $1\leq i\leq d$, the anisotropic
Sobolev space of order $\del[0]{s_t,s_s}$ is defined on an open domain
${U}\subset\mathbb{R}^{d+1}$ by \citep[see][]{Georgoulis:thesis}:
\begin{equation*}
  H^{\del[0]{s_t,s_s}}({U})
  :=
  \cbr[0]{
    v\in L^2({U})
    :
    \partial_t^{\alpha_t}
    \partial_x^{\alpha_s}
    v
    \in L^2({U})
    \;
    \text{for}
    \;
    \alpha_t\leq s_t
    ,
    \envert[0]{\alpha_s}\leq s_s
  },
\end{equation*}
where $\alpha_s = \del[0]{\alpha_{s_1},\dots,\alpha_{s_d}}$ and
$x:=\del[0]{x_1,\dots,x_d}$ denotes the spatial coordinates. The
anisotropic Sobolev norm reads
$\norm[0]{v}^2_{H^{\del{s_t,s_s}}({U})} := \sum_{\alpha_t\leq
  s_t,\,\envert[0]{\alpha_s}\leq s_s}
\norm[0]{\partial_t^{\alpha_t}\partial_x^{\alpha_s}v}^2_{U}$.

For the HDG method, we require the following finite element spaces
\begin{align*}
  V_h^{\del{p_t,p_s}}
  &:=
    \cbr[0]{
    v_h
    \in
    L^2\del[0]{\mathcal{E}}:
    v_h|_{\mathcal{K}}
    \circ
    \phi_{\mathcal{K}}
    \circ
    {G}_{\mathcal{K}}
    \in
    Q_{\del[0]{p_t,p_s}}\del[0]{\widehat{\mathcal{K}}}
    \quad
    \forall\mathcal{K}\in\mathcal{T}_h
    },
  \\
  M_h^{\del{p_t,p_s}}
  &:=
    \{
    \mu_h
    \in
    L^2\del[0]{\Gamma}:
    \mu_h|_F
    \circ
    \phi_{\mathcal{K}}
    \circ
    {G}_{\mathcal{K}}
    \in
    Q_{\del{p_t,p_s}}\del[0]{\widehat{F}}
    \quad
    \forall F\in\mathcal{F}_h,
    \mu_h=0 \text{ on } \partial\mathcal{E}_D\},
\end{align*}
where $Q^{\del{p_t,p_s}}(U)$  denotes the set of all tensor
product polynomials of degree $p_t$ in the temporal direction and
$p_s$ in each spatial direction on a domain $U$. Furthermore, we
define $\boldsymbol{V}_h=V_h^{\del{p_t,p_s}}\times M_h^{\del{p_t,p_s}}$.
We will denote the pairs $(v, \mu) \in \boldsymbol{V}_h$ and $(u,
\lambda) \in \boldsymbol{V}_h$ as $\boldsymbol{v} = (v, \mu)$ and
$\boldsymbol{u} = (u, \lambda)$. Furthermore, we denote by
$\sbr{\boldsymbol{v}}$ the HDG jump $\del{v-\mu}$.

Adapting \cite[Corollaries 3.49, 3.54]{Georgoulis:thesis} to the
space-time context, specifically taking into account the spatial mesh
size $h_K$ and time step $\delta t_{\mathcal{K}}$ of a space-time
element $\mathcal{K} \in \mathcal{T}_h$, we have the following
anisotropic inverse and trace inequalities, which hold for all
$v_h \in V_h$:
\begin{subequations}
  \label{eq:eg_inv}
  \begin{align}
    \norm[0]{\partial_tv_h}_\mathcal{K}
    &\leq
      c
      \del[0]{{\delta t_{\mathcal{K}}}^{-1}+h_K^{-1}}
      \norm[0]{v_h}_\mathcal{K},
      \label{eq:eg_inv_1}
    \\
    \norm[0]{\overline{\nabla}v_h}_{\mathcal{K}}
    &\leq
      c
      h_K^{-1}
      \norm[0]{v_h}_\mathcal{K},
      \label{eq:eg_inv_2}
    \\
    \norm[0]{v_h}_{\mathcal{Q}_\mathcal{K}}
    &\leq
      c_{\star}
      h_K^{-1/2}
      \norm[0]{v_h}_\mathcal{K},
      \label{eq:eg_inv_3}
    \\
    \norm[0]{v_h}_{\partial\mathcal{K}}
    &\leq
      c
      \del[0]{{\delta t_{\mathcal{K}}}^{-1/2} + h_K^{-1/2}}
      \norm[0]{v_h}_\mathcal{K},
      \label{eq:eg_inv_4}
  \end{align}
\end{subequations}
where $c_{\star}$ is a constant independent of $h_K$,
$\delta t_{\mathcal{K}}$, $\varepsilon$, and $T$. (We distinguish $c_{\star}$
from $c$ to prove \cref{lem:inf_sup_v_norm_weighted}.) The following
lemma introduces an additional inequality.

\begin{lemma}
  \label{lem:eg_inv}
  Let $\mathcal{K} \in \mathcal{T}_h$ be a space-time element.  For
  all $\mu_h\in M_h$,
  \begin{equation}
    \label{eq:eg_inv_low_d_1_F}
    \norm[0]{\partial_t\mu_h}_{F_{\mathcal{Q}}}
    \leq
    c
    \del[0]{{\delta t_{\mathcal{K}}}^{-1}+h_K^{-1}}
    \norm[0]{\mu_h}_{F_{\mathcal{Q}}}.
  \end{equation}
\end{lemma}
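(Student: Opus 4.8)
The plan is to run the standard scaling argument for inverse estimates on the $d$-dimensional $\mathcal{Q}$-face: pull $\mu_h$ back to the fixed reference face, invoke the classical inverse inequality on the finite-dimensional polynomial space there, and scale back while tracking how $\partial_t$ transforms under $G_{\mathcal{K}}$ and $\phi_{\mathcal{K}}$. Fix the $\mathcal{Q}$-face $F_{\mathcal{Q}} = F_{\mathcal{Q}}^j$ of $\mathcal{K}$, on which the reference spatial coordinate $\widetilde{x}_j$ is held fixed, and let $\widehat{F}\subset\partial\widehat{\mathcal{K}}$ be the corresponding face of the reference cube, so $\widehat{F}$ is a fixed $d$-dimensional cube and $G_{\mathcal{K}}|_{\widehat{F}}$ is the diagonal affine map onto the brick face, scaling by $h_K/2$ in the $d-1$ free spatial directions and by $\delta t_{\mathcal{K}}/2$ in the temporal direction. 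By definition of $M_h$, $\widehat{\mu} := \mu_h|_{F_{\mathcal{Q}}}\circ\phi_{F_{\mathcal{Q}}}\circ G_{\mathcal{K}}|_{\widehat{F}} \in Q_{(p_t,p_s)}(\widehat{F})$.

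First I would record how $\partial_t$ transforms. Since $t$ depends only on $\widetilde{t}$, we interpret $\partial_t\mu_h$ on $F_{\mathcal{Q}}$ as differentiation in $t$ at fixed reference spatial coordinates; the chain rule then gives $\partial_t\mu_h = (\partial_t\widehat{t})\,\partial_{\widehat{t}}\widehat{\mu}$ with no cross terms, where $\partial_t\widehat{t} = (\partial_{\widehat{t}}t)^{-1} = \del[0]{(J_{\phi_{\mathcal{K}}})_{00}\,\delta t_{\mathcal{K}}/2}^{-1}$. The crux is that $\envert[0]{(J_{\phi_{\mathcal{K}}})_{00}}$ is bounded above and below by constants: the upper bound is immediate from \cref{eq:diffeom_regular}, while the lower bound uses that the first row of $J_{\phi_{\mathcal{K}}}$ equals $\del[0]{(J_{\phi_{\mathcal{K}}})_{00},0,\dots,0}$ (because $\partial_{\widetilde{x}_k}t = 0$), so $\det J_{\phi_{\mathcal{K}}} = (J_{\phi_{\mathcal{K}}})_{00}\det J_{\phi_{\mathcal{K}}\setminus 00}$, and \cref{eq:diffeom_regular,eq:diffeom_regular_inv} then give $\envert[0]{(J_{\phi_{\mathcal{K}}})_{00}} = \envert[0]{\det J_{\phi_{\mathcal{K}}}}/\envert[0]{\det J_{\phi_{\mathcal{K}}\setminus 00}} \ge c^{-1}/c$. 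Hence $\envert[0]{\partial_t\widehat{t}}\le c\,\delta t_{\mathcal{K}}^{-1}$.

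Then I would assemble the scaled estimate. Writing both $\norm[0]{\partial_t\mu_h}_{F_{\mathcal{Q}}}$ and $\norm[0]{\mu_h}_{F_{\mathcal{Q}}}$ as integrals over $\widehat{F}$ via \cref{eq:k_surface} and the change of variables $\widetilde{x}=G_{\mathcal{K}}(\widehat{x})$, the common surface weight is $w = (h_K/2)^{d-1}(\delta t_{\mathcal{K}}/2)\,\del[0]{\det\del[0]{(J_{\phi_{\mathcal{K}}}^{j})^\intercal J_{\phi_{\mathcal{K}}}^{j}}}^{1/2}$, which by \cref{eq:diffeom_regular_d} equals a constant (over $\widehat{F}$) times a factor lying in $[c^{-1},c]$. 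Thus $\norm[0]{\partial_t\mu_h}_{F_{\mathcal{Q}}}^2 = \int_{\widehat{F}}\envert[0]{\partial_t\widehat{t}}^2\envert[0]{\partial_{\widehat{t}}\widehat{\mu}}^2 w \le c\,\delta t_{\mathcal{K}}^{-2}\int_{\widehat{F}}\envert[0]{\partial_{\widehat{t}}\widehat{\mu}}^2 w$, and since $w$ differs from a constant only by a bounded factor, the classical inverse inequality $\norm[0]{\partial_{\widehat{t}}\widehat{\mu}}_{\widehat{F}}\le c\norm[0]{\widehat{\mu}}_{\widehat{F}}$ on the finite-dimensional space $Q_{(p_t,p_s)}(\widehat{F})$ upgrades to $\int_{\widehat{F}}\envert[0]{\partial_{\widehat{t}}\widehat{\mu}}^2 w \le c\int_{\widehat{F}}\envert[0]{\widehat{\mu}}^2 w = c\norm[0]{\mu_h}_{F_{\mathcal{Q}}}^2$. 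Combining yields $\norm[0]{\partial_t\mu_h}_{F_{\mathcal{Q}}}\le c\,\delta t_{\mathcal{K}}^{-1}\norm[0]{\mu_h}_{F_{\mathcal{Q}}}\le c\del[0]{\delta t_{\mathcal{K}}^{-1}+h_K^{-1}}\norm[0]{\mu_h}_{F_{\mathcal{Q}}}$.

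I expect the main obstacle to be the lower bound on $(J_{\phi_{\mathcal{K}}})_{00}$, i.e.\ controlling $\partial_{\widehat{t}}t$ from below: the regularity assumptions \cref{eq:diffeom_regular,eq:diffeom_regular_inv,eq:diffeom_regular_d} only bound determinants, minors, and individual entries, so one must exploit the block-triangular structure of $J_{\phi_{\mathcal{K}}}$ forced by $\partial_{\widetilde{x}_k}t = 0$ to recover the $(0,0)$-entry from the determinant conditions. A secondary point that needs to be stated carefully is the interpretation of $\partial_t$ on the $d$-dimensional face as differentiation at fixed reference spatial coordinates, which is precisely what keeps the chain-rule step free of additional terms.
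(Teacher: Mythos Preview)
Your argument has a gap at the chain-rule step. You claim $\partial_t\mu_h = (\partial_t\widehat{t})\,\partial_{\widehat{t}}\widehat{\mu}$ with no cross terms by reinterpreting $\partial_t$ as differentiation in $t$ at fixed \emph{reference} spatial coordinates. That is not how the paper reads $\partial_t$: throughout, $\partial_t$ is the partial derivative at fixed \emph{physical} $x$, and this is the meaning needed downstream where $\partial_t\varkappa_h$ is combined with traces of $\partial_t w_h$ for volume functions $w_h$. The assumption $\partial_{\widetilde{x}_k}t=0$ tells you the first \emph{row} of $J_{\phi_{\mathcal{K}}}$ vanishes off the diagonal, but on a deforming domain the spatial components satisfy $\partial_{\widetilde{t}}x_i\neq 0$, so the first \emph{column} of $J_{\phi_{\mathcal{K}}}^{-1}$ is not $(a^{-1},0,\dots,0)^{\!\top}$: one has $\partial_t\widetilde{x}_i = (J_{\phi_{\mathcal{K}}}^{-1})_{i0}\neq 0$ in general, and the chain rule produces the extra terms $\sum_{i\neq j}(\partial_{\widetilde{x}_i}\widetilde{\mu}_h)\,\partial_t\widetilde{x}_i$ that you have dropped.

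The paper's proof keeps these cross terms, bounds the coefficients $\partial_t\widetilde{x}_i = (-1)^i\det J_{\phi_{\mathcal{K}}\setminus i0}/\det J_{\phi_{\mathcal{K}}}$ via \cref{eq:diffeom_regular,eq:diffeom_regular_inv}, and then controls $\norm[0]{\widetilde{\overline{\nabla}}\widetilde{\mu}_h}_{\widetilde{F}_{\mathcal{Q}}}$ with the spatial inverse inequality on the brick face; this is precisely where the $h_K^{-1}$ in \cref{eq:eg_inv_low_d_1_F} originates, rather than being appended at the end. Your treatment of the $(0,0)$ entry is fine and matches the paper's observation that $\partial_t\widetilde{t} = \det J_{\phi_{\mathcal{K}}\setminus 00}/\det J_{\phi_{\mathcal{K}}}$, but once the missing spatial cross terms are restored your scaling argument essentially becomes the paper's. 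The ``secondary point'' you flag is in fact the crux: resolving it the way you propose changes the operator being estimated.
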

\begin{proof}
  The $d$-dimensional hypersurface $F_{\mathcal{Q}}$ is embedded in
  $\mathbb{R}^{d+1}$ and in general it may be curved. Therefore, we
  cannot use \cref{eq:eg_inv_1} directly to conclude
  \cref{eq:eg_inv_low_d_1_F}. Instead, we first map $F_{\mathcal{Q}}$
  to the affine domain. For this, let
  $\phi_{F_{\mathcal{Q}}}(\widetilde{F}_{\mathcal{Q}}) =
  F_{\mathcal{Q}}$, i.e., the transformation of a face from the affine
  domain to the physical domain. We then observe that one of the
  spatial coordinates, which is denoted by $\widetilde{x}_j$ without
  loss of generality, of $\widetilde{F}_\mathcal{Q}$ is fixed. This
  means we can view $\widetilde{F}_\mathcal{Q}$ in the
  $\mathbb{R}^{d}$ domain with coordinates
  $\del[0]{\widetilde{t}, \widetilde{x}_1, \dots, \widetilde{x}_{j-1},
    \widetilde{x}_{j+1}, \dots, \widetilde{x}_d}$ and apply the
  $d$-dimensional versions of \cref{eq:eg_inv_1} and
  \cref{eq:eg_inv_2} to $\widetilde{F}_\mathcal{Q}$:
  \begin{equation}
    \label{eq:some_eq_31}
    \norm[0]{
      \partial_{\widetilde{t}}\widetilde{\mu}_h
    }_{\widetilde{F}_\mathcal{Q}}
    \leq
    c
    \del[0]{{\delta t_{\mathcal{K}}}^{-1}+h_K^{-1}}
    \norm[0]{\widetilde{\mu}_h}_{\widetilde{F}_\mathcal{Q}},
    \quad
    \norm[0]{
      \widetilde{\overline{\nabla}}\widetilde{\mu}_h
    }_{\widetilde{F}_\mathcal{Q}}
    \leq
    c
    h_K^{-1}
    \norm[0]{\widetilde{\mu}_h}_{\widetilde{F}_\mathcal{Q}}.
  \end{equation}
  Using the mapping $\phi_{F_\mathcal{Q}}$,
  \cref{eq:k_surface,eq:diffeom_regular_d},
  \begin{equation*}
    \begin{split}
      \norm[0]{
        \partial_{t}{\mu}_h
      }_{F_\mathcal{Q}}^2
      &=
      \int_{\widetilde{F}_\mathcal{Q}}
      \sbr[1]{
        \del[1]{\partial_t\del[0]{\widetilde{\mu}_h\circ\phi_{F_\mathcal{Q}}^{-1}}}
        \circ\phi_{F_\mathcal{Q}}
      }^2
      \sbr[1]{
        \det\del[1]{
          \del[0]{J_{\phi_\mathcal{K}}^{j}}^\intercal
          J_{\phi_\mathcal{K}}^{j}
        }
      }^{1/2}
      \dif\widetilde{s}
      \\
      &
      \leq
      c
      \int_{\widetilde{F}_\mathcal{Q}}
      \sbr[1]{
        \del[1]{\partial_t\del[0]{\widetilde{\mu}_h\circ\phi_{F_\mathcal{Q}}^{-1}}}
        \circ\phi_{F_\mathcal{Q}}
      }^2
      \dif\widetilde{s}.
    \end{split}
  \end{equation*}
  By the chain rule,
  \begin{equation*}
    \partial_t\del[0]{\widetilde{\mu}_h\circ\phi_{F_\mathcal{Q}}^{-1}}
    =
    \del[1]{
      \del[0]{\partial_{\widetilde{t}}\widetilde{\mu}_h}
      \circ \phi_{F_\mathcal{Q}}^{-1}
    }
    \frac{\partial\widetilde{t}}{\partial{t}}
    +
    \sum_{1\leq i\leq d, i\neq j}
    \del[1]{
      \del[0]{\partial_{\widetilde{x}_i}\widetilde{\mu}_h}
      \circ \phi_{F_\mathcal{Q}}^{-1}
    }
    \frac{\partial\widetilde{x}_i}{\partial{t}}.
  \end{equation*}
  We note that $\frac{\partial\widetilde{x}_i}{\partial t}$ is the
  $\del{i,0}$-element of $J_{\phi_\mathcal{K}}^{-1}$ which equals
  $\del[0]{-1}^i\det J_{\phi_{\mathcal{K}\setminus{i0}}} / \det
  J_{\phi_\mathcal{K}}$. Similarly,
  $\frac{\partial\widetilde{t}}{\partial t}$ corresponds to the
  $\del{0,0}$-element of $J_{\phi_\mathcal{K}}^{-1}$ which equals
  $\det J_{\phi_{\mathcal{K}\setminus{00}}} / \det
  J_{\phi_\mathcal{K}}$. Now using \cref{eq:diffeom_regular},
  \cref{eq:diffeom_regular_inv}, \cref{eq:some_eq_31}, definition
  \cref{eq:k_surface}, and \cref{eq:diffeom_regular_d} we find that:
  \begin{equation*}
    \begin{split}
      \norm[0]{
        \partial_{t}{\mu}_h
      }_{F_\mathcal{Q}}^2
      &\leq
      c
      \del{
        \int_{\widetilde{F}_\mathcal{Q}}
        \del[0]{
          \partial_{\widetilde{t}}
          \widetilde{\mu}_h
        }^2
        \dif\widetilde{s}
        +
        \int_{\widetilde{F}_\mathcal{Q}}
        \del[0]{
          \widetilde{ \overline{\nabla}}
          \widetilde{\mu}_h
        }^2
        \dif\widetilde{s}
      }
      \\
      &\leq
      c
      \del[0]{
        {\delta t_{\mathcal{K}}}^{-2}
        +
        h_K^{-2}
      }
      \int_{\widetilde{F}_\mathcal{Q}}
      \widetilde{\mu}_h^2
      \sbr[1]{
        \det\del[1]{
          \del[0]{J_{\phi_\mathcal{K}}^{j}}^\intercal
          J_{\phi_\mathcal{K}}^{j}
        }
      }^{1/2}
      \dif\widetilde{s}
      \le
      c
      \del[0]{
        {\delta t_{\mathcal{K}}}^{-2}
        +
        h_K^{-2}
      }
      \norm[0]{ \mu_h }_{F_\mathcal{Q}}^2,
    \end{split}
  \end{equation*}
  which is \cref{eq:eg_inv_low_d_1_F}.
\end{proof}

To end this section we introduce $\boldsymbol{V} := V \times M$, where
$V := \cbr[0]{v \in H^1(\mathcal{E})\,|\, v|_{\partial\mathcal{E}_D} =
  0} \cap H^2(\mathcal{E})$ and $M$ its trace space, and define the
extended function space $\boldsymbol{V}(h) := V(h) \times M(h)$ where
$V(h) := V_h^{(p_t,p_s)}+V$ and $M(h) := M_h^{(p_t,p_s)}+M$. We will
require the following three norms on $\boldsymbol{V}(h)$:
\begin{subequations}
  \label{eq:norm-defs}
  \begin{align}
    \tnorm{\boldsymbol{v}}_v^2
    :=
    &
      \sum_{\mathcal{K}\in\mathcal{T}_h}
      \norm[0]{v}_\mathcal{K}^2
      +
      \sum_{\mathcal{K}\in\mathcal{T}_h}
      \norm[0]{ \envert[0]{ \beta_s - \tfrac{1}{2} \beta\cdot{n} }^{1/2} \sbr{\boldsymbol{v}}
      }_{\partial\mathcal{K}}^2
      +
      \sum_{F\in\partial\mathcal{E}_N}
      \norm[0]{
      \envert[0]{
      \tfrac{1}{2}
      \beta\cdot{n}
      }^{1/2}
      \mu
      }_F^2
      \label{eq:v_norm}
    \\
    &
      +
      \sum_{\mathcal{K}\in\mathcal{T}_h}
      \varepsilon\norm[0]{\overline{\nabla}v}_\mathcal{K}^2
      +
      \sum_{\mathcal{K}\in\mathcal{T}_h}
      \varepsilon h_K^{-1}
      \norm[0]{\sbr[0]{\boldsymbol{v}}}_{\mathcal{Q}_\mathcal{K}}^2,
      \nonumber
    \\
    \tnorm{\boldsymbol{v}}_s^2
    :=
    &
      \tnorm{\boldsymbol{v}}_v^2
      +
      \sum_{\mathcal{K}\in\mathcal{T}_h}
      \tau_\varepsilon
      \norm[0]{\partial_t v}^2_\mathcal{K},
      \label{eq:s_norm}
    \\
    \tnorm{\boldsymbol{v}}_{ss}^2
    :=
    &
      \tnorm{\boldsymbol{v}}_s^2
      +
      \norm{{v}}_{sd}^2
      :=
      \tnorm{\boldsymbol{v}}_s^2
      +
      \sum_{\mathcal{K}\in\mathcal{T}_h}
      \tfrac{\delta t_{\mathcal{K}}h_K^2}
      {\delta t_{\mathcal{K}}+h_K}
      \norm[0]{\Pi_h\del[0]{\beta\cdot\nabla v}}^2_\mathcal{K},
      \label{eq:sd_norm}
  \end{align}
\end{subequations}
where $\Pi_h$ denotes the $L^2$-projection onto $V_h^{(p_t,p_s)}$ and
the parameter $\tau_{\varepsilon}$ in the definition of
$\tnorm{\boldsymbol{v}}_s$ depends on the size of the space-time
element compared to the diffusion parameter $\varepsilon$:
\begin{equation*}
  \tau_{\varepsilon} :=
  \Delta t_{\mathcal{K}}\tilde{\varepsilon}
  :=
  \begin{cases}
    \Delta t_{\mathcal{K}}
    &\text{if } \mathcal{K} \in \mathcal{T}_h^d
    :=
    \cbr[1]{
      \mathcal{K}\in\mathcal{T}_h
      |
      {\delta t_{\mathcal{K}}}\leq h_K\leq\varepsilon
    },
    \\
    \Delta t_{\mathcal{K}}\varepsilon^{1/2}
    &\text{if } \mathcal{K} \in \mathcal{T}_h^x
    :=
    \cbr[1]{
      \mathcal{K}\in\mathcal{T}_h
      |
      {\delta t_{\mathcal{K}}}\leq \varepsilon<h_K
    },
    \\
    \Delta t_{\mathcal{K}}\varepsilon
    &\text{if } \mathcal{K} \in \mathcal{T}_h^c
    :=
    \cbr[1]{
      \mathcal{K}\in\mathcal{T}_h
      |
      \varepsilon<{\delta t_{\mathcal{K}}}\leq h_K
    }.
  \end{cases}
\end{equation*}
Finally, $\beta_s := \sup_{(x,t)\in F}|\beta\cdot n|$, for
$F \subset \partial \mathcal{K}$. It is useful to remark that
\begin{equation}
  \label{eq:betasinfmax}
  \inf_{(x,t) \in F}
  (\beta_s  - \tfrac{1}{2}\beta\cdot n)
  \ge
  \tfrac{1}{2}
  \max_{(x,t)\in F}|\beta\cdot n|
  \qquad
  \forall F \in \partial\mathcal{K},\ \forall \mathcal{K} \in \mathcal{T}_h.
\end{equation}

%------------------------------------------------------------------------------
\subsection{The discretization}
\label{ss:discretization}

Let $u,v \in \sbr[0]{L^2(U)}^r$ for $1 \le r \le d+1$. We will write
$(u, v)_U = \int_U u\cdot v \dif x$ if $U \subset \mathbb{R}^{d+1}$
and $\langle u, v \rangle_U = \int_U u\cdot v \dif s$ if
$U \subset \mathbb{R}^{d}$. Furthermore, we define
$(u, v)_{\mathcal{T}_h} := \sum_{\mathcal{K} \in \mathcal{T}_h}(u,
v)_{\mathcal{K}}$,
$\langle u, v \rangle_{\partial \mathcal{T}_h} := \sum_{\mathcal{K}
  \in \mathcal{T}_h} \langle u, v \rangle_{\partial \mathcal{K}}$,
$\langle u, v \rangle_{\mathcal{Q}_h} := \sum_{\mathcal{K} \in
  \mathcal{T}_h} \langle u, v \rangle_{\mathcal{Q}_{\mathcal{K}}}$,
and
$\langle u, v \rangle_{\partial\mathcal{E}_N} := \sum_{F \in
  \mathcal{F}_h^b \cap \partial\mathcal{E}_N} \langle u, v
\rangle_{F}$.

The space-time HDG method for \cref{eq:st_adr} is given by: Find
$\boldsymbol{u}_h\in \boldsymbol{V}_h$ such that
\begin{equation}
  \label{eq:st_hdg_adr_compact}
  a_h\del[0]{
    \boldsymbol{u}_h
    ,
    \boldsymbol{v}_h
  }
  =
  \del[0]{f,v_h}_{\mathcal{T}_h}
  +
  \langle
  g,
  \mu_h
  \rangle_{\partial\mathcal{E}_N}
  \quad
  \forall\boldsymbol{v}_h\in \boldsymbol{V}_h,
\end{equation}
with
$a_h(\boldsymbol{u}_h, \boldsymbol{v}_h) := a_{h,d}(\boldsymbol{u}_h,
\boldsymbol{v}_h) + a_{h,c}(\boldsymbol{u}_h, \boldsymbol{v}_h)$ and
where
\begin{align*}
  a_{h,d}\del[0]{
  \boldsymbol{u} , \boldsymbol{v}
  }
  &
    :=
    \del[0]{\varepsilon\overline{\nabla}u,\overline{\nabla}v}_{\mathcal{T}_h}
    +
    \langle
    \varepsilon\alpha h_{K}^{-1}
    \sbr[0]{\boldsymbol{u}}
    ,
    \sbr[0]{\boldsymbol{v}}
    \rangle_{\mathcal{Q}_h}
    -
    \langle
    \varepsilon\sbr[0]{\boldsymbol{u}}
    ,
    \overline{\nabla}_{\bar{{n}}}v
    \rangle_{\mathcal{Q}_h}
    -
    \langle
    \varepsilon\overline{\nabla}_{\bar{{n}}}u,
    \sbr[0]{\boldsymbol{v}}
    \rangle_{\mathcal{Q}_h},
  \\
  a_{h,c}\del[0]{
  \boldsymbol{u} , \boldsymbol{v}
  }
  &
    :=
    -
    \del[0]{{\beta}u,\nabla v}_{\mathcal{T}_h}
    +
    \langle
    \zeta^+ \beta \cdot n \lambda, \mu
    \rangle_{\partial\mathcal{E}_N}
    +
    \langle
    \del[0]{{\beta}\cdot{n}}
    \lambda
    +
    \beta_s\sbr{\boldsymbol{u}}
    ,
    \sbr{\boldsymbol{v}}
    \rangle_{\partial\mathcal{T}_h}.
\end{align*}
Here $\alpha>0$ is a penalty parameter and $\zeta^+$ denotes the
outflow boundary indicator on a facet. The following boundedness
result, which follows from the Cauchy--Schwarz inequality and
\cref{eq:eg_inv_3}, will be useful in the proof of stability in
\cref{s:stability}:
\begin{equation}
  \label{eq:bnd_v}
  \envert[0]{a_{h,d}(\boldsymbol{u}_h,\boldsymbol{v}_h)}
  \leq
  c
  \tnorm{\boldsymbol{u}_h}_v
  \tnorm{\boldsymbol{v}_h}_v
  \qquad
  \forall \boldsymbol{u}_h,\boldsymbol{v}_h\in \boldsymbol{V}_h.
\end{equation}

%------------------------------------------------------------------------------
\section{Stability}
\label{s:stability}

The main goal of this section is to prove \cref{thm:supginfsup}
which states stability of the space-time HDG method for the
advection-diffusion equation with respect to a norm that measures
the streamline derivative, i.e., $\tnorm{\cdot}_{ss}$. We will
prove that this result is robust with respect to $\varepsilon$. A
similar result for the stationary problem is shown in
\citet[Theorem 4.6]{Ayuso:2009}. For this and following sections,
$c_T$ denotes a constant independent of $h_K$, $\delta
t_{\mathcal{K}}$, and $\varepsilon$, but linear in $T$.

\begin{theorem}
  \label{thm:supginfsup}
  There exists ${\delta t}_0$, independent of $\varepsilon$ and
  $T$, such that when $\delta t_{\mathcal{K}} \le \min(h_K,
  \delta t_0)$ on all $\mathcal{K}\in\mathcal{T}_h$, and for all
  $\boldsymbol{w}_h\in\boldsymbol{V}_h$,
  \begin{equation}
    \label{eq:supginfsup}
    c_T^{-1}
    \tnorm{\boldsymbol{w}_h}_{ss}
    \leq
    \sup_{\boldsymbol{v}_h\in \boldsymbol{V}_h}
    \frac{
      a_h(\boldsymbol{w}_h,\boldsymbol{v}_h)
    }{
      \tnorm{\boldsymbol{v}_h}_s
    }.
  \end{equation}
\end{theorem}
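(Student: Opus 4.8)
The plan is to establish the inf-sup bound \eqref{eq:supginfsup} by constructing, for a given $\boldsymbol{w}_h \in \boldsymbol{V}_h$, an explicit test function $\boldsymbol{v}_h$ (depending on $\boldsymbol{w}_h$) for which $a_h(\boldsymbol{w}_h,\boldsymbol{v}_h)$ controls $\tnorm{\boldsymbol{w}_h}_{ss}^2$ from below while $\tnorm{\boldsymbol{v}_h}_s \lesssim c_T \tnorm{\boldsymbol{w}_h}_{ss}$. Following the weighted-test-function philosophy of \citet{Ayuso:2009} and \citet{Fu:2015}, but adapted to space-time, I would take $\boldsymbol{v}_h$ to be a sum of three pieces: (i) a \emph{weighted copy} $\varphi \boldsymbol{w}_h$ (projected back into $\boldsymbol{V}_h$), where $\varphi = \varphi(t)$ is an exponential weight in time only — something like $\varphi(t) = e^{-\kappa t/T}$ or $1 + e^{-\kappa t/T}$ for a suitably chosen constant $\kappa$; the point of a $t$-only weight is that $\beta\cdot\nabla\varphi = \partial_t\varphi < 0$ has a sign, which after integration by parts in the advective term $-(\beta w_h, \nabla(\varphi w_h))_{\mathcal{T}_h}$ produces a coercive volume term $\tfrac12 (|\partial_t\varphi|\, w_h, w_h)_{\mathcal{T}_h} \gtrsim (c_T)^{-1}\|w_h\|^2_{\mathcal{T}_h}$, supplying the first term of $\tnorm{\cdot}_v$; (ii) the \emph{streamline piece} $\delta_{\mathcal{K}} \Pi_h(\beta\cdot\nabla w_h)$ with element-wise weight $\delta_{\mathcal{K}} = \delta t_{\mathcal{K}} h_K^2/(\delta t_{\mathcal{K}}+h_K)$, which upon testing against $-(\beta w_h, \nabla v_h)$ gives the $\norm{\cdot}_{sd}^2$ contribution $\sum_{\mathcal{K}} \delta_{\mathcal{K}} \norm{\Pi_h(\beta\cdot\nabla w_h)}_{\mathcal{K}}^2$; and (iii) possibly a small multiple of $\boldsymbol{w}_h$ itself (or of the facet jumps) to recover the interface and diffusion terms in $\tnorm{\cdot}_v$. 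This is exactly the structure $\tnorm{\boldsymbol{w}_h}_{ss}^2 = \tnorm{\boldsymbol{w}_h}_v^2 + \sum \tau_\varepsilon \norm{\partial_t w_h}_{\mathcal{K}}^2 + \norm{w_h}_{sd}^2$ one is trying to hit.

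The key steps, in order, are: \textbf{(1)} Establish coercivity-type lower bounds for $a_h(\boldsymbol{w}_h, \boldsymbol{w}_h)$ and, more importantly, for $a_h(\boldsymbol{w}_h, \varphi\boldsymbol{w}_h)$. The diffusion part $a_{h,d}$ is handled by the standard HDG argument (completing the square on $\varepsilon\norm{\overline\nabla w_h}^2$, $\varepsilon\alpha h_K^{-1}\norm{[\boldsymbol{w}_h]}^2_{\mathcal{Q}_h}$ against the consistency cross-terms, absorbing via Young and \eqref{eq:eg_inv_3}, requiring $\alpha$ large enough); the weighting $\varphi$ perturbs this only by lower-order terms of size $\lesssim \kappa/T$ that are absorbed. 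The advective part $a_{h,c}(\boldsymbol{w}_h,\varphi\boldsymbol{w}_h)$ is where the weight pays off: after integrating $-(\beta w_h,\nabla(\varphi w_h))_{\mathcal{T}_h}$ by parts element-wise and combining the resulting boundary terms with $\langle(\beta\cdot n)\lambda_h + \beta_s[\boldsymbol{w}_h],[\varphi\boldsymbol{w}_h]\rangle_{\partial\mathcal{T}_h}$ and the Neumann term, one uses the algebraic identity behind \eqref{eq:betasinfmax} to show the facet contributions are $\gtrsim \sum_{\mathcal{K}} \norm{|\beta_s - \tfrac12\beta\cdot n|^{1/2}[\boldsymbol{w}_h]}^2_{\partial\mathcal{K}} + \sum_{\partial\mathcal{E}_N}\norm{|\tfrac12\beta\cdot n|^{1/2}\mu_h}^2_F$, up to terms controllable by $\varphi$'s oscillation. \textbf{(2)} Bound $a_h(\boldsymbol{w}_h, \delta_{\mathcal{K}}\Pi_h(\beta\cdot\nabla w_h))$ from below: the leading term is $-(\beta w_h, \nabla(\delta_{\mathcal{K}}\Pi_h(\beta\cdot\nabla w_h)))_{\mathcal{T}_h}$; write $\beta\cdot\nabla$ of the test piece and project, producing $+\sum_{\mathcal{K}}\delta_{\mathcal{K}}\norm{\Pi_h(\beta\cdot\nabla w_h)}^2_{\mathcal{K}}$ as the good term, with all error terms — from $\overline\nabla\cdot\beta$ type commutators (here $\overline\nabla\cdot\bar\beta=0$ helps), from the diffusion cross-terms, from the facet terms, and from inverse-inequality estimates \eqref{eq:eg_inv_1}–\eqref{eq:eg_inv_4}, \eqref{eq:eg_inv_low_d_1_F} applied to $\Pi_h(\beta\cdot\nabla w_h)$ — bounded by $\tfrac14$ of the good terms already secured plus $c\,\tnorm{\boldsymbol{w}_h}_v^2$; this is where the scaling $\delta_{\mathcal{K}}\le \delta t_{\mathcal{K}} h_K/(\delta t_{\mathcal{K}}+h_K)\cdot h_K$ and the case analysis defining $\tau_\varepsilon$ (comparing $h_K$, $\delta t_{\mathcal{K}}$, $\varepsilon$) are essential so that e.g. $\varepsilon\delta_{\mathcal{K}}h_K^{-2}\lesssim\tau_\varepsilon$ and $\delta_{\mathcal{K}}\delta t_{\mathcal{K}}^{-2}\lesssim\tau_\varepsilon\tilde\varepsilon^{-1}$ etc. \textbf{(3)} Recover the streamline-time norm piece $\sum\tau_\varepsilon\norm{\partial_t w_h}_{\mathcal{K}}^2$ — note $\partial_t w_h = \beta\cdot\nabla w_h - \bar\beta\cdot\overline\nabla w_h$, and $\norm{\bar\beta\cdot\overline\nabla w_h}_{\mathcal{K}}^2 \le \norm{\overline\nabla w_h}^2_{\mathcal{K}}$ is controlled by $\varepsilon^{-1}\cdot(\text{diffusion term})$ on elements where $\varepsilon$ is not too small, and otherwise absorbed into the $\delta_{\mathcal{K}}$-term via the case definitions; combined with $\Pi_h(\beta\cdot\nabla w_h)\approx\beta\cdot\nabla w_h$ up to a projection error estimated by inverse inequalities, this yields the $\tau_\varepsilon\norm{\partial_t w_h}^2$ control. \textbf{(4)} Prove the upper bound $\tnorm{\boldsymbol{v}_h}_s \le c_T \tnorm{\boldsymbol{w}_h}_{ss}$: since $0 < \varphi \lesssim 1$ and $|\partial_t\varphi|\lesssim 1/T$, the weighted piece satisfies $\tnorm{\varphi\boldsymbol{w}_h}_s \lesssim \tnorm{\boldsymbol{w}_h}_s$ (using \eqref{eq:eg_inv_1} to handle $\partial_t(\varphi w_h)$ and the projection into $V_h$), and the streamline piece has $\tnorm{\delta_{\mathcal{K}}\Pi_h(\beta\cdot\nabla w_h)}_s \lesssim \norm{w_h}_{sd}$ by inverse estimates plus the case analysis again (crucially $\tau_\varepsilon\cdot(\text{inverse powers of }\delta t_{\mathcal{K}})\cdot\delta_{\mathcal{K}}^2 \lesssim \delta_{\mathcal{K}}$). \textbf{(5)} Assemble: choosing the relative weights of the three pieces and then $\delta t_0$ small enough (so the $O(\delta t_0)$ remainders from steps 1–3 are absorbed), conclude $a_h(\boldsymbol{w}_h,\boldsymbol{v}_h)\ge (c_T)^{-1}\tnorm{\boldsymbol{w}_h}_{ss}^2 \ge (c_T)^{-1}\tnorm{\boldsymbol{w}_h}_{ss}\tnorm{\boldsymbol{v}_h}_s/c_T$, dividing through gives \eqref{eq:supginfsup}.

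The main obstacle I expect is step~\textbf{(2)} together with the bookkeeping of the three-way case split in $\tau_\varepsilon$: one must verify, uniformly in $\varepsilon$, that every error term generated by testing $a_h$ against $\delta_{\mathcal{K}}\Pi_h(\beta\cdot\nabla w_h)$ — especially the diffusion cross-terms $\varepsilon\langle[\boldsymbol{w}_h],\overline\nabla_{\bar n}(\cdot)\rangle_{\mathcal{Q}_h}$ and the advective facet terms on $\partial\mathcal{T}_h$, where $\Pi_h(\beta\cdot\nabla w_h)$ carries a full power of the inverse mesh size — is dominated by the coercive terms. This is the point where the precise definitions $\mathcal{T}_h^d,\mathcal{T}_h^x,\mathcal{T}_h^c$ and the hypothesis $\delta t_{\mathcal{K}}\le h_K$ are used most delicately, and it is essentially the space-time analogue of the subtle estimate in \citet[proof of Thm.~4.6]{Ayuso:2009} and \citet{Fu:2015}, complicated here by the extra temporal facet ($\mathcal{R}$-face) terms, the moving-mesh Jacobian factors via \eqref{eq:diffeom_regular}–\eqref{eq:diffeom_regular_d}, and the fact that the weight $\varphi$ and the small-$\delta t_0$ absorption must be organized so the constant in front of $\tnorm{\boldsymbol{w}_h}_{ss}$ is genuinely independent of $\varepsilon$ (only linear in $T$).
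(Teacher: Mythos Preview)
Your overall architecture---weighted test function $\boldsymbol{\Pi}_h(\varphi\boldsymbol{w}_h)$ for the $\tnorm{\cdot}_v$ terms plus a streamline piece $\delta_{\mathcal K}\Pi_h(\beta\cdot\nabla w_h)$ for the $\norm{\cdot}_{sd}$ term---is precisely what the paper does, and your steps (1), (2), (4) are essentially correct outlines of the paper's Lemmas~4.2--4.5 and the argument around (4.29)--(4.30). The paper's weight is $\varphi=eT\exp(-t/T)+\chi$ with $\chi$ chosen of order $T$, which is slightly different from your $e^{-\kappa t/T}$, but the mechanism is the same.

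The genuine gap is your step~(3): you \emph{cannot} recover $\sum_{\mathcal K}\tau_\varepsilon\norm{\partial_t w_h}_{\mathcal K}^2$ from the streamline and diffusion pieces alone. Your argument needs $\tau_\varepsilon\norm{\Pi_h(\beta\cdot\nabla w_h)}_{\mathcal K}^2 \le c\,\delta_{\mathcal K}\norm{\Pi_h(\beta\cdot\nabla w_h)}_{\mathcal K}^2$, i.e.\ $\tau_\varepsilon\le c\,\delta_{\mathcal K}$. But $\delta_{\mathcal K}=\frac{\delta t_{\mathcal K}h_K^2}{\delta t_{\mathcal K}+h_K}\sim \delta t_{\mathcal K}h_K$ (since $\delta t_{\mathcal K}\le h_K$), while on $\mathcal T_h^d$ one has $\tau_\varepsilon=\Delta t_{\mathcal K}\sim\delta t_{\mathcal K}$, so $\tau_\varepsilon/\delta_{\mathcal K}\sim h_K^{-1}$; on $\mathcal T_h^x$ similarly $\tau_\varepsilon/\delta_{\mathcal K}\sim \varepsilon^{1/2}/h_K$, which is not uniformly bounded (take e.g.\ $\varepsilon=h_K^{3/2}$). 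Only on $\mathcal T_h^c$ does your inequality hold. Thus the time-derivative part of $\tnorm{\cdot}_s$ is not controlled.

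The paper closes this gap by introducing a \emph{third} test function $\boldsymbol{y}_h=(\tau_\varepsilon\,\partial_t w_h,\vartheta_h)$, where the facet component $\vartheta_h$ is defined case-by-case (see (4.14)) so that it matches $\tau_\varepsilon\,\partial_t\varkappa_h$ on faces where both adjacent elements lie in the same regime of $\mathcal T_h^d,\mathcal T_h^x,\mathcal T_h^c$ and vanishes otherwise. Testing $a_h$ against $\boldsymbol{y}_h$ produces $\sum_{\mathcal K}\tau_\varepsilon\norm{\partial_t w_h}_{\mathcal K}^2$ directly from the temporal part of $(\nabla\cdot(\beta w_h),y_h)_{\mathcal T_h}$; the delicate point is then to bound all facet terms involving $\vartheta_h$ (Lemmas~4.6--4.7), which is exactly where the three-regime case split and the inequality $\tilde\varepsilon\le\varepsilon^{1/2}h_K^{-1/2}$ on $\mathcal T_h^{dx}$ are used. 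The proof is therefore a three-stage hierarchy: $\tnorm{\cdot}_v$-inf-sup via $\boldsymbol{\Pi}_h(\varphi\boldsymbol{w}_h)$, then $\tnorm{\cdot}_s$-inf-sup via the additional $\boldsymbol{y}_h$, and finally $\tnorm{\cdot}_{ss}$ via $\boldsymbol{\kappa}_h$. You need to insert the middle stage.
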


The following two inf-sup conditions with respect to, respectively,
$\tnorm{\cdot}_v$ and $\tnorm{\cdot}_s$, and which hold under the same
conditions as \cref{thm:supginfsup}, are used to prove
\cref{thm:supginfsup}:
\begin{subequations}
  \begin{alignat}{2}
    \label{eq:new_inf_sup_v_norm}
    c_T^{-1}
    \tnorm{\boldsymbol{w}_h}_v
    \leq &
    \sup_{\boldsymbol{v}_h\in \boldsymbol{V}_h}
    \frac{
      a_h(\boldsymbol{w}_h,\boldsymbol{v}_h)
    }{
      \tnorm{\boldsymbol{v}_h}_v
    }
    &&\qquad \forall \boldsymbol{w}_h\in \boldsymbol{V}_h,
    \\
    \label{eq:new_inf_sup_s_norm}
    c_T^{-1}
    \tnorm{\boldsymbol{w}_h}_s
    \leq &
    \sup_{\boldsymbol{v}_h\in \boldsymbol{V}_h}
    \frac{
      a_h(\boldsymbol{w}_h,\boldsymbol{v}_h)
    }{
      \tnorm{\boldsymbol{v}_h}_s
    }
    &&\qquad \forall \boldsymbol{w}_h\in \boldsymbol{V}_h.
  \end{alignat}
\end{subequations}
We prove \cref{eq:new_inf_sup_v_norm,eq:new_inf_sup_s_norm} in
\cref{ss:vnorminfsupcondition,ss:snorminfsupcondition},
respectively. We then prove \cref{thm:supginfsup} in
\cref{ss:infsupsupgnorm}. To prove these results we introduce, for
$T\ge 1$, the weighting function
\begin{equation}
  \label{eq:weight_func}
  \varphi = eT\exp(-t/T) + \chi,
\end{equation}
where the positive constant $\chi$ will be determined later. For
$0<T<1$ we propose $\varphi(t) = e\exp(-t) + \chi$. In our analysis,
however, we will only consider $T\ge 1$; the analysis for $T<1$
follows identical steps as the $T\ge 1$ case, resulting in inf-sup
conditions
\cref{thm:supginfsup,eq:new_inf_sup_v_norm,eq:new_inf_sup_s_norm}
independent of $T$. Denoting the cell mean of $\bar{\beta}$ by
$\bar{\beta}_0$, we will also use that \citep[see][]{Burman:2007},
\begin{equation}
  \label{eq:betaminbeta0}
  \norm[0]{\bar{\beta}-\bar{\beta}_0}_{L^{\infty}(\mathcal{K})}\leq
  ch_K|\bar{\beta}|_{W^{1,{\infty}}(\mathcal{K})} \quad \forall \mathcal{K}\in\mathcal{T}_h.
\end{equation}

%------------------------------------------------------------------------------
\subsection{The inf-sup condition with respect to
  $|\mkern-1.5mu|\mkern-1.5mu|\cdot|\mkern-1.5mu|\mkern-1.5mu|_v$}
\label{ss:vnorminfsupcondition}

To prove \cref{eq:new_inf_sup_v_norm} we first require the following
lemmas.

\begin{lemma}
  \label{lem:inf_sup_v_norm_weighted}
  Let $\varphi$ be defined as in \cref{eq:weight_func} with $\chi$
  chosen such that $\chi >
  (e-\sqrt{2})T/\del[0]{\sqrt{2}-1}$. Furthermore, choose the penalty
  parameter $\alpha$ in \cref{eq:st_hdg_adr_compact} such that
  $\alpha>1+4c_{\star}^2$, with $c_{\star}$ the constant in
  \cref{eq:eg_inv_3}. Then for all
  $\boldsymbol{w}_h:=\del{w_h,\varkappa_h}\in \boldsymbol{V}_h$:
  \begin{equation*}
    \begin{split}
      a_{h}(\boldsymbol{w}_h,\varphi\boldsymbol{w}_h)
      \geq
      &
      \tfrac{1}{2}({T+\chi})
      \del[2]{
        \sum_{\mathcal{K}\in\mathcal{T}_h}
        \varepsilon
        \norm[0]{\overline{\nabla}w_h}_\mathcal{K}^2
        +
        \sum_{\mathcal{K}\in\mathcal{T}_h}
        \varepsilon h_K^{-1}
        \norm[0]{\sbr{\boldsymbol{w}_h}}_{\mathcal{Q}_\mathcal{K}}^2
      }
      +
      {\tfrac{1}{2}}
      \sum_{\mathcal{K}\in\mathcal{T}_h}
      \norm[0]{w_h}_\mathcal{K}^2
      \\
      &
      +
      {\del[0]{T+\chi}}
      \del[2]{
        \norm[0]{
          \envert[0]{
            \tfrac{1}{2}\beta\cdot{n}
          }^{1/2}
          \varkappa_h
        }_{\partial\mathcal{E}_N}^2
        +
        \sum_{\mathcal{K}\in\mathcal{T}_h}
        \norm[0]{
          \envert[0]{
            \beta_s
            -
            \tfrac{1}{2}
            \beta\cdot{n}
          }^{1/2}
          \sbr{\boldsymbol{w}_h}
        }_{\partial\mathcal{K}}^2
      }.
    \end{split}
  \end{equation*}
\end{lemma}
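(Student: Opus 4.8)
The plan is to plug $\boldsymbol{v}_h = \varphi\boldsymbol{w}_h$ into the bilinear form and expand $a_h(\boldsymbol{w}_h,\varphi\boldsymbol{w}_h) = a_{h,d}(\boldsymbol{w}_h,\varphi\boldsymbol{w}_h) + a_{h,c}(\boldsymbol{w}_h,\varphi\boldsymbol{w}_h)$, exploiting that $\varphi$ depends only on $t$ so that $\overline{\nabla}(\varphi w_h) = \varphi\overline{\nabla}w_h$ and $\nabla(\varphi w_h) = \varphi\nabla w_h + (\partial_t\varphi) w_h$. Since $\varphi \ge \chi > 0$ is bounded below by $\chi$ and above by $eT+\chi = (e{-}1)T + (T{+}\chi)$, and since $\partial_t\varphi = -e\exp(-t/T) \in [-e, -e\exp(-1)]$ is strictly negative, the weighting both preserves the sign of the diffusion terms (scaling them by at least $\chi$, and after the argument below by roughly $\tfrac12(T+\chi)$) and produces a favorable volume term from the advection part. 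First I would handle the diffusion block $a_{h,d}$: the symmetric-interior-penalty structure with $\varphi$ inserted gives $\varphi$ times the coercive combination $\varepsilon\|\overline{\nabla}w_h\|_\mathcal{K}^2 + \varepsilon\alpha h_K^{-1}\|\sbr{\boldsymbol{w}_h}\|_{\mathcal{Q}_\mathcal{K}}^2$ minus the two consistency terms $\varepsilon\langle \sbr{\boldsymbol{w}_h},\overline{\nabla}_{\bar n}(\varphi w_h)\rangle$ and $\varepsilon\varphi\langle\overline{\nabla}_{\bar n}w_h,\sbr{\boldsymbol{w}_h}\rangle$; applying Cauchy--Schwarz, Young's inequality with a small parameter, and the inverse trace inequality \cref{eq:eg_inv_3} with constant $c_\star$ absorbs these into the coercive terms provided $\alpha > 1 + 4c_\star^2$, leaving at least $\varphi\bigl(\varepsilon\|\overline{\nabla}w_h\|^2 + \varepsilon h_K^{-1}\|\sbr{\boldsymbol{w}_h}\|_{\mathcal{Q}_\mathcal{K}}^2\bigr)$ up to harmless constants, hence at least $\tfrac12(T+\chi)$ times that combination after bookkeeping.

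Next I would treat the advection block $a_{h,c}(\boldsymbol{w}_h,\varphi\boldsymbol{w}_h)$. The volume term is $-({\beta}w_h,\nabla(\varphi w_h))_{\mathcal{T}_h} = -({\beta}w_h, \varphi\nabla w_h)_{\mathcal{T}_h} - ({\beta}w_h, (\partial_t\varphi)w_h)_{\mathcal{T}_h}$. For the first piece I would integrate by parts on each element: since $\nabla\cdot\beta = 0$, $({\beta}w_h,\varphi\nabla w_h)_\mathcal{K} = \tfrac12\langle \varphi\,\beta\cdot n\,w_h, w_h\rangle_{\partial\mathcal{K}} - \tfrac12((\partial_t\varphi)\beta_t w_h, w_h)_\mathcal{K}$ with $\beta_t = 1$, so that $-({\beta}w_h,\varphi\nabla w_h)_{\mathcal{T}_h} = -\tfrac12\langle\varphi\beta\cdot n\,w_h,w_h\rangle_{\partial\mathcal{T}_h} + \tfrac12((\partial_t\varphi)w_h,w_h)_{\mathcal{T}_h}$. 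Combining with the $-({\beta}w_h,(\partial_t\varphi)w_h)$ term gives a net volume contribution $-\tfrac12((\partial_t\varphi)w_h,w_h)_{\mathcal{T}_h} = \tfrac12\sum_\mathcal{K}\|\,|\partial_t\varphi|^{1/2}w_h\|_\mathcal{K}^2 \ge \tfrac12 e\exp(-1)\sum_\mathcal{K}\|w_h\|_\mathcal{K}^2$, and since $e\exp(-1) = 1$ this is exactly the $\tfrac12\sum_\mathcal{K}\|w_h\|_\mathcal{K}^2$ claimed. The remaining face terms are $-\tfrac12\langle\varphi\beta\cdot n\,w_h,w_h\rangle_{\partial\mathcal{T}_h}$ together with the boundary term $\langle\zeta^+\beta\cdot n\,\varkappa_h,\varphi\varkappa_h\rangle_{\partial\mathcal{E}_N}$ and the HDG face term $\langle(\beta\cdot n)\varkappa_h + \beta_s\sbr{\boldsymbol{w}_h},\varphi\sbr{\boldsymbol{w}_h}\rangle_{\partial\mathcal{T}_h}$; here one rewrites everything in terms of $w_h = \varkappa_h + \sbr{\boldsymbol{w}_h}$ and uses single-valuedness of $\varkappa_h$ across interior faces so that the $\varkappa_h$--$\varkappa_h$ face contributions telescope/cancel on $\mathcal{F}_h^i$ and leave only boundary pieces on $\partial\mathcal{E}_N$. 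After this algebra the face terms reorganize into the combination $\varphi\sum_\mathcal{K}\|\,|\beta_s - \tfrac12\beta\cdot n|^{1/2}\sbr{\boldsymbol{w}_h}\|_{\partial\mathcal{K}}^2 + \varphi\|\,|\tfrac12\beta\cdot n|^{1/2}\varkappa_h\|_{\partial\mathcal{E}_N}^2$, using \cref{eq:betasinfmax} to guarantee $\beta_s - \tfrac12\beta\cdot n \ge 0$; since $\varphi \ge \chi$ and in fact $\varphi \ge T + \chi$ is false but $\varphi \ge \chi$ combined with the constraint $\chi > (e-\sqrt2)T/(\sqrt2-1)$ (equivalently $\sqrt2\,\chi > eT - T + \sqrt2\,T - \chi$... ) is exactly what makes $\varphi \ge \tfrac{1}{\sqrt2}(eT+\chi)$ or the analogous bound needed to extract the factor $(T+\chi)$ — this is the place where the precise form of the $\chi$ constraint is used.

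The main obstacle, I expect, is the careful bookkeeping in the advection face terms: one must track the sign contributions from $\langle\varphi\beta\cdot n\,w_h,w_h\rangle_{\partial\mathcal{T}_h}$ (which is a jump-type term once summed over elements), correctly split $w_h = \varkappa_h + \sbr{\boldsymbol{w}_h}$, use that $\varphi$ and $\varkappa_h$ are single-valued on faces to cancel the cross terms, and verify that after inserting the $\zeta^+$ boundary term and the $\beta_s$ penalty term the resulting quadratic form is bounded below by the claimed multiple of the two face norms — in particular that the $\beta_s|\sbr{\boldsymbol{w}_h}|^2$ term dominates any negative contribution from $-\tfrac12\beta\cdot n|\sbr{\boldsymbol{w}_h}|^2$, which is precisely \cref{eq:betasinfmax}. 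The second subtlety is that the $\varphi$-weight must be pulled out of the face integrals with its pointwise lower bound while still retaining enough of it (a full factor $\ge \tfrac12(T+\chi)$ on the diffusion terms, $\ge T+\chi$ on the face terms) — this forces the specific lower bound on $\chi$, whose algebraic origin is the inequality $\min_t\varphi(t) = eT\exp(-1) + \chi = T + \chi$ against $\max_t\varphi(t) = eT + \chi$, together with a $\sqrt2$ coming from a Young's inequality used to split a product in the diffusion consistency terms. Everything else is routine Cauchy--Schwarz/Young applications and the inverse inequalities already available from \cref{eq:eg_inv,eq:eg_inv_3}.
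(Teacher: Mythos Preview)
Your overall strategy matches the paper's proof: split $a_h=a_{h,c}+a_{h,d}$, use that $\overline{\nabla}\varphi=0$ and $-\partial_t\varphi\ge 1$ for the advective block, and absorb the IP consistency terms via \cref{eq:eg_inv_3} in the diffusive block. Two points of confusion, however, would derail the bookkeeping as written.

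First, you write that ``$\varphi\ge T+\chi$ is false'' and then try to recover the factor $(T+\chi)$ on the advective face terms from the constraint on~$\chi$. In fact $\varphi\ge T+\chi$ \emph{is} true: $\min_{t\in[0,T]}eT\exp(-t/T)=eT\exp(-1)=T$, as you yourself note at the end. So the $(T+\chi)$ factor on the jump and Neumann face terms comes for free from the pointwise lower bound on $\varphi$; no condition on $\chi$ is needed there.

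Second, and more importantly, the constraint $\chi>(e-\sqrt2)T/(\sqrt2-1)$ is used in the \emph{diffusive} block, not the advective one. The point is that in $a_{h,d}(\boldsymbol{w}_h,\varphi\boldsymbol{w}_h)$ the two coercive terms pick up the lower bound $\varphi\ge T+\chi$, while the two consistency terms are controlled only by the upper bound $\varphi\le eT+\chi$. After Cauchy--Schwarz and \cref{eq:eg_inv_3} one is left with a quadratic form of the type $(T+\chi)x^2-2c_\star(eT+\chi)xy+\alpha(T+\chi)y^2$; the paper handles this with the elementary inequality $ax^2-2bxy+dy^2\ge(ad-b^2)(x^2+y^2)/(a+d)$, and the constraint on $\chi$ is exactly what makes $(eT+\chi)/(T+\chi)<\sqrt2$, so that $\alpha>1+4c_\star^2$ yields the factor $\tfrac12(T+\chi)$. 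Your sketch says ``leaving at least $\varphi(\cdots)$ up to harmless constants, hence at least $\tfrac12(T+\chi)$ after bookkeeping'', which hides precisely this step; without tracking the $\max\varphi$ versus $\min\varphi$ mismatch you cannot see why either hypothesis on $\chi$ or on $\alpha$ is needed.
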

\begin{proof}
  On an element $\mathcal{K} \in \mathcal{T}_h$ we have
  $-w_h\beta\cdot\nabla\del[0]{\varphi
    w_h}=-\tfrac{1}{2}\nabla\cdot\del[0]{\varphi\beta
    w_h^2}-\tfrac{1}{2}w_h^2\beta\cdot\nabla\varphi$. Using Gauss's
  theorem,
  $\sbr[0]{\varphi\boldsymbol{w}_h}=\varphi\sbr[0]{\boldsymbol{w}_h}$,
  and that
  $\zeta^+\beta \cdot n = (\beta\cdot n + |\beta \cdot n|)/2$, we note
  that
  \begin{equation*}
    \begin{split}
      a_{h,c}(\boldsymbol{w}_h, \varphi\boldsymbol{w}_h)
      =&
      -
      \del[0]{
        \tfrac{1}{2}
        w_h^2
        ,
        {\beta\cdot\nabla\varphi}
      }_{\mathcal{T}_h}
      +
      \langle
      \tfrac{1}{2}
      \varphi
      \varkappa_h^2
      ,
      {\beta\cdot{n}+\envert[0]{\beta\cdot{n}}}
      \rangle_{\partial\mathcal{E}_N}
      \\
      &
      -
      \langle
      \tfrac{1}{2}
      \varphi w_h^2
      ,
      {\beta\cdot{n}}
      \rangle_{\partial\mathcal{T}_h}
      +
      \langle
      \varphi
      \sbr[0]{\boldsymbol{w}_h}^2
      ,
      \sup\envert[0]{\beta\cdot{n}}
      \rangle_{\partial\mathcal{T}_h}
      +
      \langle
      \varphi
      \varkappa_h
      \sbr[0]{\boldsymbol{w}_h}
      ,
      {\beta\cdot{n}}
      \rangle_{\partial\mathcal{T}_h}.
    \end{split}
  \end{equation*}
  Since
  $-\tfrac{1}{2}{w}_h^2+\varkappa_h\sbr[0]{\boldsymbol{w}_h}=
  -\tfrac{1}{2}\sbr[0]{\boldsymbol{w}_h}^2-\tfrac{1}{2}\varkappa_h^2$,
  $\varkappa_h$ is single-valued on element boundaries,
  $\varkappa_h=0$ on $\partial\mathcal{E}_D$, we have by definition of
  $\varphi$ and using that $-\beta \cdot \nabla \varphi \ge 1$:
  \begin{multline}
    \label{eq:some_eq_37}
    a_{h,c}(\boldsymbol{w}_h,\varphi\boldsymbol{w}_h)
    \geq
    \del[0]{T+\chi}
    \norm[0]{
      \envert[0]{\tfrac{1}{2}\beta\cdot{n}}^{1/2}
      \varkappa_h
    }_{\partial\mathcal{E}_N}^2
    +\tfrac{1}{2}
    \sum_{\mathcal{K}\in\mathcal{T}_h}
    \norm[0]{w_h}_\mathcal{K}^2
    \\
    +
    \del[0]{T+\chi}
    \sum_{\mathcal{K}\in\mathcal{T}_h}
    \norm[0]{
      \envert[0]{
        \beta_s
        -
        \tfrac{1}{2}
        \beta\cdot{n}
      }^{1/2}
      \sbr[0]{\boldsymbol{w}_h}
    }_{\partial\mathcal{K}}^2.
  \end{multline}
  Next, noting that $\overline{\nabla}\varphi =0$, and using the
  Cauchy--Schwarz inequality and \cref{eq:eg_inv_3},
  \begin{equation*}
    \begin{split}
      a_{h,d} (\boldsymbol{w}_h,\varphi\boldsymbol{w}_h)
      \geq
      &
      \del[0]{T+\chi}
      \sum_{\mathcal{K}\in\mathcal{T}_h}
      \varepsilon\norm[0]{\overline{\nabla} w_h}_\mathcal{K}^2
      +
      \del[0]{T+\chi}
      \alpha
      \sum_{\mathcal{K}\in\mathcal{T}_h}
      \varepsilon h_K^{-1}
      \norm[0]{\sbr[0]{\boldsymbol{w}_h}}_{\mathcal{Q}_\mathcal{K}}^2
      \\
      &
      -
      \sum_{\mathcal{K}\in\mathcal{T}_h}
      2\varepsilon^{1/2}
      c_{\star}
      \del[0]{eT+\chi}
      \norm[0]{\overline{\nabla}w_h}_\mathcal{K}
      \varepsilon^{1/2}h_K^{-1/2}
      \norm[0]{\sbr[0]{\boldsymbol{w}_h}}_{\mathcal{Q}_\mathcal{K}}.
    \end{split}
  \end{equation*}
  Using H\"older's inequality for sums and the inequality
  $ax^2-2bxy+dy^2 \geq (ad-b^2) (x^2+y^2) / (a+d)$, which holds for
  positive real numbers $a,b,d$ and $ad > b^2$
  \citep[see][]{Pietro:book} allows us to obtain
  \begin{equation*}
    a_{h,d}(\boldsymbol{w}_h,\varphi\boldsymbol{w}_h)
    \geq
    \del[0]{T+\chi}
    \tfrac{
      \alpha
      -
      \del[1]{
        \tfrac{
          c_{\star}
          \del[0]{eT+\chi}
        }{
          T+\chi
        }
      }^2
    }{1+\alpha}
    \del[0]{
      \sum_{\mathcal{K}\in\mathcal{T}_h}
      \varepsilon\norm[0]{\overline{\nabla} w_h}_\mathcal{K}^2
      +
      \sum_{\mathcal{K}\in\mathcal{T}_h}
      \varepsilon h_K^{-1}
      \norm[0]{\sbr[0]{\boldsymbol{w}_h}}_{\mathcal{Q}_\mathcal{K}}^2
    }.
  \end{equation*}
  Since $\chi$ and $\alpha$ are chosen such that
  $\chi > (e-\sqrt{2})T/\del[0]{\sqrt{2}-1}$, so that
  $T+\chi > (eT+\chi)/\sqrt{2}$, and
  $\alpha > 1 + 4 c_{\star}^2$, it follows that
  \begin{equation}
    \label{eq:some_eq_42}
    a_{h,d}(\boldsymbol{w}_h,\varphi\boldsymbol{w}_h)
    \geq
    \tfrac{1}{2}(T+\chi)
    \del[0]{
      \sum_{\mathcal{K}\in\mathcal{T}_h}
      \varepsilon
      \norm[0]{\overline{\nabla}w_h}_\mathcal{K}^2
      +
      \sum_{\mathcal{K}\in\mathcal{T}_h}
      \varepsilon h_K^{-1}
      \norm[0]{\sbr[0]{\boldsymbol{w}_h}}_{\mathcal{Q}_\mathcal{K}}^2
    }.
  \end{equation}
  The result follows after combining
  \cref{eq:some_eq_37,eq:some_eq_42}.
\end{proof}

Let $\Pi_h^{\mathcal{F}}$ be the $L^2$-projection onto
$M_h^{(p_t,p_s)}$. In \cref{s:projforstab} we show that for
$u \in H^1(\mathcal{K})$,
\begin{subequations}\label{eq:projforstab}
  \begin{align}
    \norm[0]{\overline{\nabla}\del[0]{u-\Pi_hu}}_{\mathcal{K}}
    &\leq
      c
      \norm[0]{\overline{\nabla}u}_{\mathcal{K}},
      \label{eq:proj_spatial_grad}
    \\
    \norm[0]{\partial_t\del[0]{u-\Pi_hu}}_{\mathcal{K}}
    &\leq
      c
      \del[0]{
      \norm[0]{\partial_tu}_{\mathcal{K}}
      +
      \norm[0]{\overline{\nabla}u}_{\mathcal{K}}
      },
      \label{eq:proj_time_derivative}
    \\
    \norm[0]{\Pi_hu-\Pi_h^{\mathcal{F}}u}_{\mathcal{Q}_\mathcal{K}}
    &\leq
      c
      h_K^{1/2}
      \norm[0]{\overline{\nabla}u}_{\mathcal{K}}.
      \label{eq:proj_diff_elem_facet_Q}
  \end{align}
\end{subequations}

The following lemma extends the $L^2$-projection estimates of
\cite[Lemma 4.2]{Ayuso:2009} to space-time elements, taking into
account the spatial mesh size $h_K$ and time step
$\delta t_{\mathcal{K}}$.

\begin{lemma}
  \label{lem:proj_with_wt}
  Let $\varphi$ be the function defined in \cref{eq:weight_func}. For
  any $w_h \in V_h^{(p_t,p_s)}$ the following estimates hold:
  \begin{subequations}
    \label{eq:proj_with_wt}
    \begin{align}
      \norm[0]{\del{I-\Pi_h}\del{\varphi w_h}}_\mathcal{K}
      &\leq
        c
        \delta t_{\mathcal{K}}
        \norm[0]{w_h}_\mathcal{K},
        \label{eq:proj_with_wt_1}
      \\
      \norm[0]{
      \overline{\nabla}
      \del[0]{
      \del[0]{I-\Pi_h}
      \del[0]{\varphi w_h}
      }
      }_\mathcal{K}
      &\leq
        c
        {\delta t_{\mathcal{K}} h_K^{-1}}
        \norm[0]{w_h}_\mathcal{K},
        \label{eq:proj_with_wt_2}
      \\
      \norm[0]{
      \overline{\nabla}
      \del[0]{
      \del[0]{I-\Pi_h}
      \del[0]{\varphi w_h}
      }
      }_{\mathcal{Q}_\mathcal{K}}
      &\leq
        c
        \delta t_{\mathcal{K}}
        h_K^{-3/2}
        \norm[0]{w_h}_\mathcal{K},
        \label{eq:proj_with_wt_5}
      \\
      \norm[0]{
      \del[0]{I-\Pi_h}
      \del[0]{\varphi w_h}
      }_{\mathcal{Q}_\mathcal{K}}
      &\leq
        c
        {\delta t_{\mathcal{K}}h_K^{-1/2}}
        \norm[0]{w_h}_\mathcal{K},
        \label{eq:proj_with_wt_3}
      \\
      \norm[0]{
      \del[0]{I-\Pi_h}
      \del[0]{\varphi w_h}
      }_{\mathcal{R}_\mathcal{K}}
      &\leq
        c
        {\delta t_{\mathcal{K}}}^{1/2}
        \norm[0]{w_h}_\mathcal{K}.
        \label{eq:proj_with_wt_4}
    \end{align}
  \end{subequations}
\end{lemma}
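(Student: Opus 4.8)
The plan is to prove all five estimates \emph{elementwise} by a scaling argument, exploiting two features of the weighting function $\varphi = eT\exp(-t/T)+\chi$ from \cref{eq:weight_func}: it depends only on the time variable, so that $\overline{\nabla}\varphi = 0$; and $|\varphi'(t)| = e\exp(-t/T)\le e$ for $t\ge 0$, so that $\varphi$ is nearly constant on each space-time element. Fix $\mathcal{K}\in\mathcal{T}_h$ and let $\bar\varphi_{\mathcal{K}}$ denote the value of $\varphi$ at some point of the time interval occupied by $\mathcal{K}$ (equivalently, its time-average over that interval). Because the time coordinate on $\mathcal{K}$ depends only on $\widetilde t$ and has Jacobian bounded by \cref{eq:diffeom_regular}, the temporal extent of $\mathcal{K}$ is $\mathcal{O}(\delta t_{\mathcal{K}})$; together with the mean value theorem this gives the oscillation bound $\norm[0]{\varphi-\bar\varphi_{\mathcal{K}}}_{L^\infty(\mathcal{K})}\le c\,\delta t_{\mathcal{K}}$ with $c$ independent of $h_K$, $\delta t_{\mathcal{K}}$, $\varepsilon$ and $T$. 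The key point is that $\bar\varphi_{\mathcal{K}}$ is a constant, so on $\mathcal{K}$ the function $\bar\varphi_{\mathcal{K}}w_h$ lies in the local polynomial space and is fixed by $\Pi_h$; since $\Pi_h$ acts elementwise as an $L^2(\mathcal{K})$-orthogonal projection (hence is $L^2$-stable), we obtain the identity $(I-\Pi_h)(\varphi w_h)|_{\mathcal{K}}=(I-\Pi_h)\del[0]{(\varphi-\bar\varphi_{\mathcal{K}})w_h}|_{\mathcal{K}}$. All five estimates then follow from this identity combined with the oscillation bound, $L^2$-stability of $\Pi_h$, and the inverse/trace inequalities \cref{eq:eg_inv}.

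\cref{eq:proj_with_wt_1} is immediate: $\norm[0]{(I-\Pi_h)(\varphi w_h)}_{\mathcal{K}}\le\norm[0]{(\varphi-\bar\varphi_{\mathcal{K}})w_h}_{\mathcal{K}}\le c\,\delta t_{\mathcal{K}}\norm[0]{w_h}_{\mathcal{K}}$. For the gradient bounds \cref{eq:proj_with_wt_2,eq:proj_with_wt_5} I split $\overline{\nabla}(I-\Pi_h)\del[0]{(\varphi-\bar\varphi_{\mathcal{K}})w_h}=(\varphi-\bar\varphi_{\mathcal{K}})\overline{\nabla}w_h-\overline{\nabla}\Pi_h\del[0]{(\varphi-\bar\varphi_{\mathcal{K}})w_h}$, where the first term uses $\overline{\nabla}\varphi=0$. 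The first term is controlled on $\mathcal{K}$ by the oscillation bound and the inverse inequality \cref{eq:eg_inv_2}, giving $c\,\delta t_{\mathcal{K}}h_K^{-1}\norm[0]{w_h}_{\mathcal{K}}$, and on $\mathcal{Q}_{\mathcal{K}}$ by additionally the trace inequality \cref{eq:eg_inv_3}, giving $c\,\delta t_{\mathcal{K}}h_K^{-3/2}\norm[0]{w_h}_{\mathcal{K}}$. The second term is (a derivative of) a finite element function, so I first apply the relevant inequalities from \cref{eq:eg_inv} to transfer the gradient and, on $\mathcal{Q}_{\mathcal{K}}$, the boundary onto the $L^2(\mathcal{K})$ norm of $\Pi_h\del[0]{(\varphi-\bar\varphi_{\mathcal{K}})w_h}$, and then invoke $L^2$-stability of $\Pi_h$ and the oscillation bound, reaching the same two bounds. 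Adding the contributions gives \cref{eq:proj_with_wt_2,eq:proj_with_wt_5}.

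The remaining two bounds \cref{eq:proj_with_wt_3,eq:proj_with_wt_4} are analogous without a gradient. Writing $(I-\Pi_h)\del[0]{(\varphi-\bar\varphi_{\mathcal{K}})w_h}=(\varphi-\bar\varphi_{\mathcal{K}})w_h-\Pi_h\del[0]{(\varphi-\bar\varphi_{\mathcal{K}})w_h}$, the first term is bounded using the oscillation bound together with the trace inequality \cref{eq:eg_inv_3} on $w_h$ (for the $\mathcal{Q}_{\mathcal{K}}$-norm) or with \cref{eq:eg_inv_4} and the standing assumption $\delta t_{\mathcal{K}}\le h_K$, so that $\delta t_{\mathcal{K}}^{-1/2}+h_K^{-1/2}\le 2\,\delta t_{\mathcal{K}}^{-1/2}$ (for the $\mathcal{R}_{\mathcal{K}}$-norm). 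The second term is handled by applying the same trace inequalities to the finite element function $\Pi_h\del[0]{(\varphi-\bar\varphi_{\mathcal{K}})w_h}$, followed by $L^2$-stability and the oscillation bound. This yields the claimed $c\,\delta t_{\mathcal{K}}h_K^{-1/2}\norm[0]{w_h}_{\mathcal{K}}$ on $\mathcal{Q}_{\mathcal{K}}$ and $c\,\delta t_{\mathcal{K}}^{1/2}\norm[0]{w_h}_{\mathcal{K}}$ on $\mathcal{R}_{\mathcal{K}}$.

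I expect no substantive difficulty; the only care needed is bookkeeping. Specifically, the inverse and trace inequalities in \cref{eq:eg_inv} are stated for $v_h\in V_h$, whereas above they are applied to $\overline{\nabla}w_h$ and to $\overline{\nabla}\Pi_h\del[0]{(\varphi-\bar\varphi_{\mathcal{K}})w_h}$, which on a curved $\mathcal{K}$ are not literally tensor-product polynomials; this is legitimate because the reference-element pullback of such gradients lies in a fixed finite-dimensional space and the constants are controlled by the diffeomorphism regularity \cref{eq:diffeom_regular,eq:diffeom_regular_d}, exactly as in the derivation of \cref{eq:bnd_v}. The other point to check is that the temporal width of each $\mathcal{K}$ is indeed $\mathcal{O}(\delta t_{\mathcal{K}})$, which is what makes the oscillation bound — and hence all five estimates — uniform in $T$. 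Careful tracking of the powers of $h_K$ and $\delta t_{\mathcal{K}}$ then completes the proof.
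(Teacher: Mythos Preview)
Your proof is correct and takes a genuinely different, much more elementary route than the paper. The paper's proof (Appendix~B) works entirely on the reference/affine element: it decomposes the tensor-product projection $\widehat{\Pi}=\widehat{\pi}_{\widehat{t}}\prod_i\widehat{\pi}_{\widehat{x}_i}$, exploits the vanishing of high-order mixed derivatives $\partial_{\widetilde{x}_i}^{p_s+1}(\widetilde{\varphi w_h})=0$, applies a Leibniz-rule estimate $\norm[0]{\partial_{\widetilde t}^{p_t+1}(\widetilde{\varphi w_h})}_{\widetilde{\mathcal K}}\le c\,\delta t_{\mathcal K}^{-p_t}\norm[0]{\widetilde w_h}_{\widetilde{\mathcal K}}$, and then invokes several technical lemmas from \cite{Georgoulis:thesis,Georgoulis:2006,Houston:2002} before scaling back. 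Your approach bypasses all of this by subtracting the elementwise constant $\bar\varphi_{\mathcal K}$ and using only the oscillation bound $\norm[0]{\varphi-\bar\varphi_{\mathcal K}}_{L^\infty(\mathcal K)}\le c\,\delta t_{\mathcal K}$, the $L^2(\mathcal K)$-stability of the local projection, and the inverse/trace inequalities \cref{eq:eg_inv}. The gain is a proof that fits in a few lines and makes transparent that the estimates hinge solely on $\varphi$ being slowly varying in time and constant in space; the paper's machinery, by contrast, would extend more readily to weight functions whose higher time derivatives need to be tracked. Your caveat about applying \cref{eq:eg_inv_3} to $\overline{\nabla}w_h$ on curved elements is well taken and correctly resolved: the same scaling-through-$\phi_{\mathcal K}$ argument underlying \cref{eq:eg_inv} (and used implicitly by the paper, e.g.\ in bounding $\norm[0]{\overline{\nabla}(\Pi_h(\cdot))}_{\mathcal{Q}_{\mathcal K}}$ in the proof of \cref{thm:supginfsup}) handles this with constants depending only on \cref{eq:diffeom_regular,eq:diffeom_regular_inv,eq:diffeom_regular_d}.
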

\begin{proof}
  See \cref{s:proof_prof_with_wt}.
\end{proof}

\begin{lemma}
  \label{lem:stab_fortin}
  Let
  $\boldsymbol{\Pi}_h\del[0]{\varphi\boldsymbol{w}_h} :=
  \del[0]{\Pi_h\del[0]{\varphi
      w_h},\Pi_h^{\mathcal{F}}\del[0]{\varphi\varkappa_h}}$ for all
  $\boldsymbol{w}_h:=\del{w_h,\varkappa_h}\in \boldsymbol{V}_h$. The
  following holds:
  \begin{equation*}
    \tnorm{\boldsymbol{\Pi}_h\del{\varphi\boldsymbol{w}_h}}_v
    \leq
    c_T
    \tnorm{\boldsymbol{w}_h}_v.
  \end{equation*}
\end{lemma}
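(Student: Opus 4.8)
The plan is to write $\boldsymbol{\Pi}_h(\varphi\boldsymbol{w}_h) = \varphi\boldsymbol{w}_h - \boldsymbol{e}$, where $\varphi\boldsymbol{w}_h := \del[0]{\varphi w_h,\varphi\varkappa_h}$ and $\boldsymbol{e} := \del[0]{e_1,e_2}$ with $e_1 := \del[0]{I-\Pi_h}\del[0]{\varphi w_h}$ and $e_2 := \del[0]{I-\Pi_h^{\mathcal{F}}}\del[0]{\varphi\varkappa_h}$, so that $\sbr[0]{\boldsymbol{e}} = e_1 - e_2$ and, by the triangle inequality, $\tnorm{\boldsymbol{\Pi}_h(\varphi\boldsymbol{w}_h)}_v \le \tnorm{\varphi\boldsymbol{w}_h}_v + \tnorm{\boldsymbol{e}}_v$. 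The first term is handled at once: since $\varphi$ depends on $t$ only, $\overline{\nabla}\varphi = 0$, $\sbr[0]{\varphi\boldsymbol{w}_h} = \varphi\sbr[0]{\boldsymbol{w}_h}$, and $0 < \chi \le \varphi \le eT + \chi$ on $[0,T]$, each of the five contributions to $\tnorm{\varphi\boldsymbol{w}_h}_v^2$ in \cref{eq:v_norm} is bounded by $(eT+\chi)^2$ times the corresponding contribution to $\tnorm{\boldsymbol{w}_h}_v^2$, so that $\tnorm{\varphi\boldsymbol{w}_h}_v \le c_T\tnorm{\boldsymbol{w}_h}_v$.

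It then remains to bound $\tnorm{\boldsymbol{e}}_v$, which I would do term by term in \cref{eq:v_norm}. The element $L^2$-term and the $\varepsilon\norm[0]{\overline{\nabla}\cdot}_\mathcal{K}^2$-term only involve $e_1$: \cref{eq:proj_with_wt_1} bounds $\norm[0]{e_1}_\mathcal{K}$ by $c\,\delta t_{\mathcal{K}}\norm[0]{w_h}_\mathcal{K}$, and \cref{eq:proj_spatial_grad} together with $\overline{\nabla}\varphi = 0$ gives $\norm[0]{\overline{\nabla}e_1}_\mathcal{K} \le c\norm[0]{\varphi\overline{\nabla}w_h}_\mathcal{K} \le c(eT+\chi)\norm[0]{\overline{\nabla}w_h}_\mathcal{K}$. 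For the two face-terms I split $\sbr[0]{\boldsymbol{e}} = e_1 - e_2$ and estimate the $e_1$-part using $\envert[0]{\beta_s-\tfrac12\beta\cdot n}\le 3$ on $\partial\mathcal{K}$ together with \cref{eq:proj_with_wt_3,eq:proj_with_wt_4} on $\mathcal{Q}_\mathcal{K}$ and $\mathcal{R}_\mathcal{K}$, and with \cref{eq:proj_with_wt_3} in the $\varepsilon h_K^{-1}$-penalty. In each instance the surplus powers of $\delta t_{\mathcal{K}}$ produced by \cref{lem:proj_with_wt}, combined with $\delta t_{\mathcal{K}}\le\min(h_K,\delta t_0)$ and $\varepsilon\le 1$, let the $e_1$-contribution be absorbed into $\sum_\mathcal{K}\norm[0]{w_h}_\mathcal{K}^2 + \sum_\mathcal{K}\varepsilon\norm[0]{\overline{\nabla}w_h}_\mathcal{K}^2 + \sum_\mathcal{K}\varepsilon h_K^{-1}\norm[0]{\sbr[0]{\boldsymbol{w}_h}}_{\mathcal{Q}_\mathcal{K}}^2 \le \tnorm{\boldsymbol{w}_h}_v^2$, with a constant $c_T$.

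The main obstacle is the facet projection error $e_2$, which \cref{lem:proj_with_wt} does not address. On an $\mathcal{R}$-face the time coordinate is constant, so $\varphi\varkappa_h$ restricted there already lies in $M_h^{(p_t,p_s)}$ and $e_2 = 0$ there. On a $\mathcal{Q}$-face of an element $\mathcal{K}$ with time interval $(t_*,t^*)$ I would subtract the constant $\varphi(t_*)$: since $\varphi(t_*)\varkappa_h \in M_h^{(p_t,p_s)}$, one has $e_2 = \del[0]{I-\Pi_h^{\mathcal{F}}}\del[0]{(\varphi-\varphi(t_*))\varkappa_h}$, and $L^2$-stability of $\Pi_h^{\mathcal{F}}$ with $\norm[0]{\varphi-\varphi(t_*)}_{L^\infty(\mathcal{K})}\le\norm[0]{\varphi'}_{L^\infty}\delta t_{\mathcal{K}}\le e\,\delta t_{\mathcal{K}}$ gives $\norm[0]{e_2}_{\mathcal{Q}_\mathcal{K}}\le e\,\delta t_{\mathcal{K}}\norm[0]{\varkappa_h}_{\mathcal{Q}_\mathcal{K}}$ (for curved $\mathcal{Q}$-faces this step is justified by passing to the affine face exactly as in the proof of \cref{lem:eg_inv}). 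The delicate part is then absorbing $\delta t_{\mathcal{K}}^2\norm[0]{\varkappa_h}_F^2$ on each $\mathcal{Q}$-face $F\subset\partial\mathcal{K}$ into $\tnorm{\boldsymbol{w}_h}_v^2$: writing $\varkappa_h = w_h - \sbr[0]{\boldsymbol{w}_h}$, the $w_h$-part is controlled with the trace inequality \cref{eq:eg_inv_3} and $\delta t_{\mathcal{K}}^2 h_K^{-1}\le\delta t_0$, but for the $\sbr[0]{\boldsymbol{w}_h}$-part one must not bound the weight $\envert[0]{\beta_s-\tfrac12\beta\cdot n}$ (or $\envert[0]{\tfrac12\beta\cdot n}$ in the $\partial\mathcal{E}_N$-term) by a constant, since that would blow up on a face where $\beta\cdot n$ degenerates; instead one keeps the weight and invokes \cref{eq:betasinfmax}, which yields $\beta_s\le 2\envert[0]{\beta_s-\tfrac12\beta\cdot n}$ and $\envert[0]{\tfrac12\beta\cdot n}\le\envert[0]{\beta_s-\tfrac12\beta\cdot n}$ pointwise on $F$, so that the $\sbr[0]{\boldsymbol{w}_h}$-part is bounded by $c\,\delta t_0^2$ times the matching weighted jump seminorm already present in $\tnorm{\boldsymbol{w}_h}_v^2$. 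Summing the $e_1$- and $e_2$-estimates over $\mathcal{K}\in\mathcal{T}_h$, all constants being bounded linearly in $T$, gives $\tnorm{\boldsymbol{e}}_v\le c_T\tnorm{\boldsymbol{w}_h}_v$ and hence the claim; I expect establishing the $e_2$ facet estimate and carrying the advective weights through so that no inverse power of $\varepsilon$ or $h_K$ survives on degenerate faces to be the crux.
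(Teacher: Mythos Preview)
Your argument is correct and complete; the $e_2$ facet estimate via subtracting the constant $\varphi(t_*)$ works, and the use of \cref{eq:betasinfmax} in the form $\beta_s \le 2(\beta_s - \tfrac{1}{2}\beta\cdot n)$ pointwise lets you reabsorb the jump contribution into the weighted seminorm without any leftover factor of $\varepsilon^{-1}$ or $h_K^{-1}$. (A small wording issue: nothing ``blows up'' when $\beta\cdot n$ degenerates --- the point is rather that the unweighted $\norm[0]{\sbr[0]{\boldsymbol{w}_h}}_F$ is simply not controlled by $\tnorm{\boldsymbol{w}_h}_v$ unless you retain the weight.)

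The paper takes a different route. Rather than splitting off $\varphi\boldsymbol{w}_h$ and estimating the projection defects $e_1,e_2$ separately, it bounds each term of $\tnorm{\boldsymbol{\Pi}_h(\varphi\boldsymbol{w}_h)}_v$ directly and, for the facet contributions, inserts $\Pi_h^{\mathcal{F}}(\varphi w_h)$: the jump becomes $(\Pi_h-\Pi_h^{\mathcal{F}})(\varphi w_h) + \Pi_h^{\mathcal{F}}(\varphi\sbr[0]{\boldsymbol{w}_h})$. The first piece is handled by the dedicated estimate \cref{eq:proj_diff_elem_facet_Q}, $\norm[0]{\Pi_h u - \Pi_h^{\mathcal{F}}u}_{\mathcal{Q}_\mathcal{K}} \le ch_K^{1/2}\norm[0]{\overline{\nabla}u}_\mathcal{K}$, and the second by $L^2$-stability of $\Pi_h^{\mathcal{F}}$. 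In effect the paper never needs your separate bound on $e_2 = (I-\Pi_h^{\mathcal{F}})(\varphi\varkappa_h)$, trading it for \cref{eq:proj_diff_elem_facet_Q}. Your approach is more self-contained in that it avoids \cref{eq:proj_diff_elem_facet_Q} (whose proof in \cref{s:projforstab} requires the tensor-product decomposition of the projection on the reference element); the paper's approach is slightly cleaner on the weighted facet terms because $\Pi_h^{\mathcal{F}}(\varphi\sbr[0]{\boldsymbol{w}_h})$ carries the jump directly and no ``split $\varkappa_h = w_h - \sbr[0]{\boldsymbol{w}_h}$'' step is needed.
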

\begin{proof}
  See \cref{s:stab_fortin}.
\end{proof}

\begin{lemma}
  \label{lem:inf_sup_v_norm_wtd_proj}
  For any
  $\boldsymbol{w}:=\del{w,\varkappa}\in L^2(\mathcal{E})\times
  L^2(\Sigma)$, let
  $\boldsymbol{\delta}\boldsymbol{w} :=
  \del[0]{{w}-\Pi_h{w},\varkappa-\Pi_h^{\mathcal{F}}\varkappa}$. The
  following holds for all $\boldsymbol{w}_h\in \boldsymbol{V}_h$:
  \begin{equation*}
    \begin{split}
      a_h(\boldsymbol{w}_h,  \boldsymbol{\delta}\del{\varphi\boldsymbol{w}_h})
      \leq
      &
      c_T
      \del[1]{
        \sum_{\mathcal{K}\in\mathcal{T}_h}
        \varepsilon
        \norm[0]{\overline{\nabla}w_h}_\mathcal{K}^2
       +
        \sum_{\mathcal{K}\in\mathcal{T}_h}
        \varepsilon h_K^{-1}
        \norm[0]{\sbr{\boldsymbol{w}_h}}_{\mathcal{Q}_\mathcal{K}}^2
      }
      +
      \sum_{\mathcal{K}\in\mathcal{T}_h}
      {\del[0]{
          1/8
          +
          {\delta t_{\mathcal{K}}}
        }}
      \norm[0]{w_h}_\mathcal{K}^2
      \\
      &
      +
      c_T
      \del[1]{
        \sum_{F\in\partial\mathcal{E}_N}
        \norm[0]{
          \envert[0]{
            \tfrac{1}{2}\beta\cdot{n}
          }^{1/2}
          \varkappa_h
        }_F^2
        +
        \sum_{\mathcal{K}\in\mathcal{T}_h}
        \norm[0]{
          \envert[0]{
            \beta_s
            -
            \tfrac{1}{2}
            \beta\cdot{n}
          }^{1/2}
          \sbr{\boldsymbol{w}_h}
        }_{\partial\mathcal{K}}^2
      }.
    \end{split}
  \end{equation*}
\end{lemma}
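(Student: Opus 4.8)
Throughout write $\boldsymbol{v} := \boldsymbol{\delta}(\varphi\boldsymbol{w}_h) = (v,\mu)$ with $v := (I-\Pi_h)(\varphi w_h)$ and $\mu := (I-\Pi_h^{\mathcal{F}})(\varphi\varkappa_h)$, and split $a_h(\boldsymbol{w}_h,\boldsymbol{v}) = a_{h,d}(\boldsymbol{w}_h,\boldsymbol{v}) + a_{h,c}(\boldsymbol{w}_h,\boldsymbol{v})$. The guiding principle is that $\boldsymbol{v}$ is a \emph{projection error}, hence carries a power of $\delta t_{\mathcal{K}}$, so every term should land either in the diffusion pieces $c_T(\sum\varepsilon\|\overline\nabla w_h\|_\mathcal{K}^2 + \sum\varepsilon h_K^{-1}\|\sbr{\boldsymbol{w}_h}\|_{\mathcal{Q}_\mathcal{K}}^2)$, in the jump/boundary pieces $c_T(\sum_{\partial\mathcal{E}_N}\|\,|\tfrac12\beta\cdot n|^{1/2}\varkappa_h\|_F^2 + \sum\|\,|\beta_s-\tfrac12\beta\cdot n|^{1/2}\sbr{\boldsymbol{w}_h}\|_{\partial\mathcal{K}}^2)$, or contribute to $\|w_h\|_\mathcal{K}^2$ only through a genuinely small factor. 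I will repeatedly use that $\varphi$ depends on $t$ only, so that on each $\mathcal{K}$, $v = (I-\Pi_h)\del[0]{(\varphi-\varphi_0)w_h}$ for any constant $\varphi_0$ (since $w_h\in V_h$), choosing $\varphi_0$ with $\|\varphi-\varphi_0\|_{L^\infty(\mathcal{K})}\le e\,\delta t_{\mathcal{K}}$; that $\overline\nabla(\varphi w_h)=\varphi\overline\nabla w_h$; that $\varphi$ is \emph{exactly} constant on $\mathcal{R}$-faces, whence $\mu=0$ there and the $\mathcal{R}$-trace of $v$ involves only $(\varphi-\varphi^{\ast,\ast})$; and that $\varkappa_h$ and $\mu$ are single-valued on $\mathcal{F}_h$, so cross terms $\langle(\beta\cdot n)\varkappa_h,\mu\rangle$ cancel across interior faces.

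\emph{Diffusion part.} Estimate the four terms of $a_{h,d}(\boldsymbol{w}_h,\boldsymbol{v})$ by Cauchy--Schwarz. For the $\|\overline\nabla v\|_\mathcal{K}$ and $\|\overline\nabla v\|_{\mathcal{Q}_\mathcal{K}}$ that appear, write $\overline\nabla v = \varphi\overline\nabla w_h - \overline\nabla\Pi_h(\varphi w_h)$; by \cref{eq:proj_spatial_grad}, $\|\overline\nabla\Pi_h(\varphi w_h)\|_\mathcal{K}\le c\|\overline\nabla(\varphi w_h)\|_\mathcal{K}+\|\overline\nabla v\|_\mathcal{K}\le c_T\|\overline\nabla w_h\|_\mathcal{K}$, and then \cref{eq:eg_inv_3} gives the $\mathcal{Q}$-trace with factor $h_K^{-1/2}$; for $\|\sbr{\boldsymbol{v}}\|_{\mathcal{Q}_\mathcal{K}}$ use the identity $\sbr{\boldsymbol{v}} = (I-\Pi_h^{\mathcal{F}})(\varphi\sbr{\boldsymbol{w}_h}) - (\Pi_h-\Pi_h^{\mathcal{F}})(\varphi w_h)$ together with $(I-\Pi_h^{\mathcal{F}})(\varphi\sbr{\boldsymbol{w}_h}) = (I-\Pi_h^{\mathcal{F}})\del[0]{(\varphi-\varphi_0)\sbr{\boldsymbol{w}_h}}$ (the pulled-back $\sbr{\boldsymbol{w}_h}$ lies in the facet polynomial space) and \cref{eq:proj_diff_elem_facet_Q}, obtaining $\|\sbr{\boldsymbol{v}}\|_{\mathcal{Q}_\mathcal{K}}\le c\,\delta t_{\mathcal{K}}\|\sbr{\boldsymbol{w}_h}\|_{\mathcal{Q}_\mathcal{K}} + c_T h_K^{1/2}\|\overline\nabla w_h\|_\mathcal{K}$. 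Combining with \cref{eq:proj_with_wt}, Young's inequality (weights $\propto c_T$) and $\delta t_{\mathcal{K}}\le h_K$, every diffusion term is absorbed into $c_T(\sum\varepsilon\|\overline\nabla w_h\|_\mathcal{K}^2 + \sum\varepsilon h_K^{-1}\|\sbr{\boldsymbol{w}_h}\|_{\mathcal{Q}_\mathcal{K}}^2)$ with no $\|w_h\|_\mathcal{K}^2$ contribution.

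\emph{Convective part.} Integrate by parts using $\nabla\cdot\beta=0$: $-(\beta w_h,\nabla v)_{\mathcal{T}_h} = (\beta\cdot\nabla w_h, v)_{\mathcal{T}_h} - \langle(\beta\cdot n)w_h, v\rangle_{\partial\mathcal{T}_h}$. In the volume term, $\partial_t w_h\in V_h$, so only the spatial part remains; writing $\bar\beta\cdot\overline\nabla w_h = (\bar\beta-\bar\beta_0)\cdot\overline\nabla w_h + \bar\beta_0\cdot\overline\nabla w_h$ and using that $v\perp\Pi_h(\,\cdot\,)$, bound the first piece by \cref{eq:betaminbeta0}, \cref{eq:proj_with_wt_1} and the inverse inequality \cref{eq:eg_inv_2} to get $\le c\,\delta t_{\mathcal{K}}\|w_h\|_\mathcal{K}^2$, and the second piece via $((I-\Pi_h)(\bar\beta_0\cdot\overline\nabla w_h),v)_\mathcal{K}$ using the $L^2$-approximation of $\Pi_h$ together with \cref{eq:proj_with_wt_1} and \cref{eq:eg_inv}. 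For the boundary part, add the HDG facet terms of $a_{h,c}$: since the numerical flux is $(\beta\cdot n)\varkappa_h + \beta_s\sbr{\boldsymbol{w}_h}$, the algebra collapses $-\langle(\beta\cdot n)w_h, v\rangle + \langle(\beta\cdot n)\varkappa_h+\beta_s\sbr{\boldsymbol{w}_h}, \sbr{\boldsymbol{v}}\rangle$ over $\partial\mathcal{T}_h$ to $\langle(\beta_s-\beta\cdot n)\sbr{\boldsymbol{w}_h}, v\rangle_{\partial\mathcal{T}_h} - \langle(\beta\cdot n)\varkappa_h,\mu\rangle_{\partial\mathcal{T}_h} - \langle\beta_s\sbr{\boldsymbol{w}_h},\mu\rangle_{\partial\mathcal{T}_h}$; the single-valued $\langle(\beta\cdot n)\varkappa_h,\mu\rangle$ telescopes to $\partial\mathcal{E}_N$ and combines with the $\langle\zeta^+\beta\cdot n\varkappa_h,\mu\rangle_{\partial\mathcal{E}_N}$ term to $\langle\zeta^-|\beta\cdot n|\varkappa_h,\mu\rangle_{\partial\mathcal{E}_N}$. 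Now bound: $\langle(\beta_s-\beta\cdot n)\sbr{\boldsymbol{w}_h},v\rangle$ using $|\beta_s-\beta\cdot n|\le 2(\beta_s-\tfrac12\beta\cdot n)$ and \cref{eq:betasinfmax}, splitting $\partial\mathcal{K}=\mathcal{Q}_\mathcal{K}\cup\mathcal{R}_\mathcal{K}$ and invoking \cref{eq:proj_with_wt_3,eq:proj_with_wt_4} (with $\mu=0$ on $\mathcal{R}$-faces); $\langle\beta_s\sbr{\boldsymbol{w}_h},\mu\rangle$ using $\mu=0$ on $\mathcal{R}$-faces, $\|\mu\|_{\mathcal{Q}_\mathcal{K}}\le c\,\delta t_{\mathcal{K}}\|\varkappa_h\|_{\mathcal{Q}_\mathcal{K}}$ (from $\varphi$ near-constant, \cref{eq:eg_inv_3} and \cref{lem:eg_inv}), $\varkappa_h=w_h-\sbr{\boldsymbol{w}_h}$, and \cref{eq:betasinfmax}; and the $\partial\mathcal{E}_N$ term by Cauchy--Schwarz against $\|\,|\tfrac12\beta\cdot n|^{1/2}\varkappa_h\|_{\partial\mathcal{E}_N}$. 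After Young's inequality (weights $\propto c_T$), every term lands in the diffusion or jump/boundary pieces with constant $c_T$, or in $\|w_h\|_\mathcal{K}^2$ with a coefficient of the form $c_\star^2\,\delta t_{\mathcal{K}}h_K^{-1}$ or $c\,\delta t_{\mathcal{K}}$; using $\delta t_{\mathcal{K}}\le h_K$ and collecting the finitely many bounded contributions into the margin $\tfrac18$ yields the stated bound.

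\emph{Main obstacle.} The delicate point is the convective boundary and volume bookkeeping: a naive Cauchy--Schwarz on $\langle(\beta\cdot n)w_h,v\rangle_{\partial\mathcal{T}_h}$ or on $(\bar\beta w_h,\overline\nabla v)_{\mathcal{T}_h}$ produces an uncontrolled $c\|w_h\|_\mathcal{K}^2$, which is fatal because $\tnorm{\boldsymbol{w}_h}_v$ gives no bare control of $\|\overline\nabla w_h\|_\mathcal{K}^2$ and the $\|w_h\|_\mathcal{K}^2$ coefficient on the right is required to be $\le\tfrac18+\delta t_{\mathcal{K}}$. The whole argument hinges on removing this term through the exact cancellations --- $L^2$-orthogonality of $v$ to $V_h$, $\nabla\cdot\beta=0$, single-valuedness of $\varkappa_h,\mu$, and $\varphi$ being exactly constant on $\mathcal{R}$-faces and $\mathcal{O}(\delta t_{\mathcal{K}})$-close to a constant on each element --- so that $w_h$ on element boundaries is always paired with an object that is itself $\mathcal{O}(\delta t_{\mathcal{K}})$ small, and the only surviving traces of $w_h$ carry the factor $\delta t_{\mathcal{K}}h_K^{-1}\le 1$.
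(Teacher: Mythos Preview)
Your proposal is essentially correct and follows the same overall strategy as the paper: integrate the convective volume term by parts, exploit $L^2$-orthogonality of $v=(I-\Pi_h)(\varphi w_h)$ to $V_h$ together with the cell-mean splitting $\bar\beta=\bar\beta_0+(\bar\beta-\bar\beta_0)$, and control the facet and diffusion terms via the projection estimates \cref{eq:proj_with_wt} and \cref{eq:proj_diff_elem_facet_Q}. Two points of comparison are worth noting.

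First, you work harder than necessary on $\langle\beta_s\sbr{\boldsymbol{w}_h},\mu\rangle_{\partial\mathcal{T}_h}$. This term is \emph{exactly zero}: $\beta_s$ is constant on each face $F$, and the trace $\sbr{\boldsymbol{w}_h}|_F$ lies in $M_h|_F$, so $\beta_s\sbr{\boldsymbol{w}_h}|_F\in M_h|_F$ is orthogonal to $\mu=(I-\Pi_h^{\mathcal{F}})(\varphi\varkappa_h)$. The paper uses this, which removes the need to bring in $\varkappa_h=w_h-\sbr{\boldsymbol{w}_h}$ and the associated trace of $w_h$. Similarly, your ``second piece'' $(\bar\beta_0\cdot\overline\nabla w_h,v)_{\mathcal{K}}$ is zero for the same orthogonality reason (not merely small); there is nothing to estimate there.

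Second, your treatment of the Neumann term $\langle\zeta^-|\beta\cdot n|\varkappa_h,\mu\rangle_{\partial\mathcal{E}_N}$ is too quick. A plain Cauchy--Schwarz with the weight $|\tfrac12\beta\cdot n|^{1/2}$ does not commute with $\Pi_h^{\mathcal{F}}$, so you cannot pass directly from $\|\,|\tfrac12\beta\cdot n|^{1/2}\mu\|_F$ to $\|\,|\tfrac12\beta\cdot n|^{1/2}\varkappa_h\|_F$. The paper handles this by first pulling out $\sup_F|\beta\cdot n|$ and then invoking the bound \cref{eq:singleboundFinEN} (proved in Appendix~\ref{s:stab_fortin}), which expresses the weighted projection norm in terms of $\|\,|\tfrac12\beta\cdot n|^{1/2}\varkappa_h\|_F$, the jump term, and $\|w_h\|_{\mathcal{K}}$; this is where the $\tfrac12\delta\|w_h\|_{\mathcal{K}}^2$ contribution from $M_4$ arises. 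Your diffusion treatment via \cref{eq:proj_spatial_grad} (bounding $\|\overline\nabla v\|_{\mathcal{K}}\le c_T\|\overline\nabla w_h\|_{\mathcal{K}}$ rather than the paper's $c\,\delta t_{\mathcal{K}}h_K^{-1}\|w_h\|_{\mathcal{K}}$ from \cref{eq:proj_with_wt_2}) is a valid alternative that avoids some $\|w_h\|_{\mathcal{K}}^2$ terms, but note that applying \cref{eq:eg_inv_3} to $\overline\nabla w_h$ or $\overline\nabla\Pi_h(\varphi w_h)$ on the physical element requires a scaling argument through the affine domain since these are not in $V_h$ when $\phi_{\mathcal{K}}$ is non-affine; the paper sidesteps this by using the dedicated estimate \cref{eq:proj_with_wt_5}.
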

\begin{proof}
  Let $z\in H^1(\mathcal{T}_h)$ and $\varpi\in L^2(\mathcal{F}_h)$
  such that $\varpi|_{\partial\mathcal{E}_D}=0$. Let
  $\boldsymbol{z}:=\del{z,\varpi}$. Integrating
  $\del{\beta w_h,\nabla z}_{\mathcal{T}_h}$ by parts and using that
  $\langle \del{\beta\cdot{n}}\varkappa_h , \varpi
  \rangle_{\partial\mathcal{T}_h} = \langle
  \del{\beta\cdot{n}}\varkappa_h , \varpi
  \rangle_{\partial\mathcal{E}_N} $, because $\varkappa_h$ and
  $\varpi$ are single-valued on $\Gamma$ and zero on
  $\partial\mathcal{E}_D$, we have:
  \begin{equation}
    \label{eq:some_eq_112}
    \begin{split}
      a_{h,c}(\boldsymbol{w}_h,\boldsymbol{z})
        =&
        \del{\beta\cdot\nabla w_h,z}_{\mathcal{T}_h}
        -
        \langle
        \tfrac{1}{2}
        \del{\beta\cdot{n}}
        \sbr{\boldsymbol{w}_h},
        z
        \rangle_{\partial\mathcal{T}_h}
        \\
        &+
        \langle
        \del{
          \beta_s
          -
          \tfrac{1}{2}
          \del{\beta\cdot{n}}
        }
        \sbr{\boldsymbol{w}_h}
        ,
        z
        \rangle_{\partial\mathcal{T}_h}
        -
        \langle
        \beta_s
        \sbr{\boldsymbol{w}_h}
        ,
        \varpi
        \rangle_{\partial\mathcal{T}_h}
        +
        \langle
        \tfrac{1}{2}
        \del{
          \envert[0]{{\beta}\cdot{n}}
          -
          \beta\cdot{n}
        }\varkappa_h,
        \varpi
        \rangle_{\partial\mathcal{E}_N}.
      \end{split}
    \end{equation}
    At this point, note that
    $\boldsymbol{\delta}\del{\varphi\boldsymbol{w}_h}
    =\boldsymbol{\delta}\del{eT\exp(-{t/T})\boldsymbol{w}_h}$ because
    $\boldsymbol{\delta}\del{\chi\boldsymbol{w}_h}=0$. Furthermore,
    let $\beta_0 = (1,\bar{\beta}_0)$. By definition of $\Pi_h$, the
    following vanishes:
    $\del[0]{ \beta_0\cdot\nabla w_h, \del[0]{I-\Pi_h}
      \del[0]{eT\exp(-t/T)}w_h}_{\mathcal{T}_h} =0$. From
    \cref{eq:some_eq_112}, with
    $\boldsymbol{z}=\boldsymbol{\delta}\del[0]{eT\exp(-t/T)\boldsymbol{w}_h}$,
    we now find that:
    \begin{equation*}
      \begin{split}
      a_{h,c}\big(\boldsymbol{w}_h, \boldsymbol{\delta}\del[0]{eT\exp(-t/T)}\boldsymbol{w}_h\big)
      =
      &
      \del[0]{
        \del[0]{\beta-\beta_0}\cdot\nabla w_h,
        \del[0]{I-\Pi_h}\del[0]{eT\exp(-t/T)w_h}
      }_{\mathcal{T}_h}
      \\
      &
      -
      \langle
      \tfrac{1}{2}
      \del[0]{\beta\cdot{n}}\sbr{\boldsymbol{w}_h},
      \del[0]{I-\Pi_h}\del[0]{eT\exp(-t/T)w_h}
      \rangle_{\partial\mathcal{T}_h}
      \\
      &
      +
      \langle
      \del[0]{
        \beta_s
        -
        \tfrac{1}{2}
        \del[0]{\beta\cdot{n}}
      }
      \sbr{\boldsymbol{w}_h}
      ,
      \del[0]{I-\Pi_h}\del[0]{eT\exp(-t/T) w_h}
      \rangle_{\partial\mathcal{T}_h}
      \\
      &
      +
      \langle
      \tfrac{1}{2}
      \del[0]{
        \envert[0]{{\beta}\cdot{n}}
        -
        \beta\cdot{n}
      }\varkappa_h,
      \del[0]{I-\Pi_h^{\mathcal{F}}}
      \del[0]{eT\exp(-t/T)\varkappa_h}
      \rangle_{\partial\mathcal{E}_N}
      \\
      =:&
      M_1+M_2+M_3+M_4,
    \end{split}
  \end{equation*}
  where, by definitions of $\Pi_h^\mathcal{F}$ and $\beta_s$,
  $\langle \beta_s\sbr{\boldsymbol{w}_h},
  \del[0]{I-\Pi_h^{\mathcal{F}}} \del[0]{eT\exp(-t/T)\varkappa_h}
  \rangle_{ \partial\mathcal{T}_h } =0$. We will now bound each of the
  terms $M_i$, $i=1,\hdots,4$ separately.

  We observe that
  $ \del[0]{\beta-\beta_0} \cdot\nabla w_h=
  \del[0]{\bar{\beta}-\bar{\beta}_0}\cdot\overline{\nabla} w_h$
  because the first components of $\beta$ and $\beta_0$ are 1. We then
  bound $M_1$ using the Cauchy--Schwarz inequality,
  \cref{eq:betaminbeta0}, \cref{eq:proj_with_wt_1}, and
  \cref{eq:eg_inv_2}:
  \begin{equation}
    \label{eq:M_term_2}
    M_1
    \leq
    \sum_{\mathcal{K}\in\mathcal{T}_h}
    c
    h_K
    h_K^{-1}
    \norm[0]{w_h}_\mathcal{K}
    \delta t_{\mathcal{K}}
    \norm[0]{w_h}_\mathcal{K}
    =
    c
    \sum_{\mathcal{K}\in\mathcal{T}_h}
    \delta t_{\mathcal{K}}
    \norm[0]{w_h}_\mathcal{K}^2.
  \end{equation}
  We proceed with bounding $M_2$ and $M_3$. Using the Cauchy--Schwarz
  inequality, \cref{eq:betasinfmax}, \cref{eq:proj_with_wt_3}, and
  \cref{eq:proj_with_wt_4}, we find that
  \begin{multline*}
    M_2+M_3
    \le
    c
    \sum_{\mathcal{K}\in\mathcal{T}_h}
    \delta t_{\mathcal{K}}h_K^{-1/2}
    \norm[0]{
      \envert[0]{
        \beta_s
        -
        \tfrac{1}{2}
        \beta\cdot{n}
      }^{1/2}
      \sbr{\boldsymbol{w}_h}
    }_{\mathcal{Q}_\mathcal{K}}
    \norm[0]{w_h}_\mathcal{K}
    \\
    +
    c
    \sum_{\mathcal{K}\in\mathcal{T}_h}
    {\delta t_{\mathcal{K}}}^{1/2}
    \norm[0]{
      \envert[0]{
        \beta_s
        -
        \tfrac{1}{2}
        \beta\cdot{n}
      }^{1/2}
      \sbr{\boldsymbol{w}_h}
    }_{\mathcal{R}_\mathcal{K}}
    \norm[0]{w_h}_\mathcal{K}.
  \end{multline*}
  Since $\delta t_{\mathcal{K}}\leq h_K$,
  $\delta t_{\mathcal{K}}h_K^{-1/2}$ can be bounded by $1$. Therefore,
  applying Young's inequality,
  \begin{equation}
    \label{eq:M_term_3_4Y}
    \begin{split}
      M_2+M_3
      \leq&
      c
      \sum_{\mathcal{K}\in\mathcal{T}_h}
      \norm[0]{
        \envert[0]{
          \beta_s
          -
          \tfrac{1}{2}
          \del[0]{\beta\cdot{n}}
        }^{1/2}
        \sbr{\boldsymbol{w}_h}
      }_{\partial\mathcal{K}}
      \norm[0]{w_h}_\mathcal{K}
      \\
      \le&
      \tfrac{1}{2}\delta\sum_{\mathcal{K}\in\mathcal{T}_h}
      \norm[0]{w_h}_{\mathcal{K}}^2
      +
      c
      \delta^{-1}
      \sum_{\mathcal{K}\in\mathcal{T}_h}
      \norm[0]{
        \envert[0]{
          \beta_s-\tfrac{1}{2}\del[0]{\beta\cdot n}
        }^{1/2}\sbr[0]{\boldsymbol{w}_h}
      }_{\partial\mathcal{K}}^2.
    \end{split}
  \end{equation}
  For $M_4$, we first apply the Cauchy--Schwarz inequality and the
  triangle inequality:
  \begin{equation}
    \label{eq:some_eq_113}
    \begin{split}
      M_4
      &
      \leq
      c
      \sum_{F\in\partial\mathcal{E}_N}
      \norm[0]{
        \envert[0]{\tfrac{1}{2}\beta\cdot{n}}^{1/2}\varkappa_h
      }_F
      \del[1]{
        \norm[0]{
          \envert[0]{\tfrac{1}{2}\beta\cdot{n}}^{1/2}
          \varkappa_h
        }_F
        +
        \norm[0]{
          \envert[0]{\tfrac{1}{2}\beta\cdot{n}}^{1/2}
          \Pi_h^\mathcal{F}\del[0]{eT\exp(-t/T)\varkappa_h}
        }_F
      }.
    \end{split}
  \end{equation}
  The second term in parentheses on the right-hand side of
  \cref{eq:some_eq_113} is bounded following identical steps in
  showing \cref{eq:singleboundFinEN}. Applying also Young's
  inequality, and denoting by $\mathcal{K}_F$ the space-time element
  of which $F$ is a facet,
  \begin{equation}
    \label{eq:M_term_5_Y}
    \begin{split}
      M_4
      \leq
      &
      c
      \sum_{F\in\partial\mathcal{E}_N}
      \norm[0]{
        \envert[0]{\tfrac{1}{2}\beta\cdot{n}}^{1/2}\varkappa_h}_F^2
      \\
      &
      +
      c
      \sum_{F\in\partial\mathcal{E}_N}
      \norm[0]{
        \envert[0]{\tfrac{1}{2}\beta\cdot{n}}^{1/2}\varkappa_h
      }_F
      \del[1]{
        c_T
        \norm[0]{
          \envert[0]{\tfrac{1}{2}\beta\cdot{n}}^{1/2}\varkappa_h
        }_F
        +
        c_T
        \norm[0]{\del[0]{\beta_s-\tfrac{1}{2}\beta\cdot n}^{1/2}\sbr{\boldsymbol{w}_h}}_{\partial\mathcal{K}}
        +
        \norm[0]{w_h}_{\mathcal{K}_F}
      }
      \\
      =
      &
      \del[0]{c+c_T}
      \sum_{F\in\partial\mathcal{E}_N}
      \norm[0]{
        \envert[0]{\tfrac{1}{2}\beta\cdot{n}}^{1/2}\varkappa_h
      }_F^2
      \\
      &
      +
      c_T
      \sum_{F\in\partial\mathcal{E}_N}
      \norm[0]{
        \envert[0]{\tfrac{1}{2}\beta\cdot{n}}^{1/2}\varkappa_h
      }_F
      \norm[0]{\del[0]{\beta_s-\tfrac{1}{2}\beta\cdot n}^{1/2}\sbr{\boldsymbol{w}_h}}_{\partial\mathcal{K}}
      +
      c
      \sum_{F\in\partial\mathcal{E}_N}
      \norm[0]{
        \envert[0]{\tfrac{1}{2}\beta\cdot{n}}^{1/2}\varkappa_h
      }_F
      \norm[0]{w_h}_{\mathcal{K}_F}
      \\
      \le
      &
      \tfrac{\delta }{2}\sum_{\mathcal{K}\in\mathcal{T}_h}
      \norm[0]{w_h}_{\mathcal{K}}^2
      +
      \del[0]{c_T+c\delta^{-1}}
      \sum_{F\in\partial\mathcal{E}_N}
      \norm[0]{\envert[0]{\tfrac{1}{2}\beta\cdot
          n}^{1/2}\varkappa_h}_F^2
      +
      c_T
      \sum_{F\in\partial\mathcal{E}_N}
      \norm[0]{
        \del[0]{\beta_s-\tfrac{1}{2}\beta\cdot n }^{1/2}
        \sbr{\boldsymbol{w}_h}
      }_{\partial\mathcal{K}}^2.
    \end{split}
  \end{equation}
  We proceed with the diffusive term $a_{h,d}$. With test function
  $\boldsymbol{z} =
  \boldsymbol{\delta}\del[0]{eT\exp(-t/T)\boldsymbol{w}_h}$,
  \begin{equation*}
    \begin{split}
      &
      a_{h,d}\del[1]{\boldsymbol{w}_h, \boldsymbol{\delta}\del[0]{eT\exp(-t/T)}\boldsymbol{w}_h}
      \\
      =
      &
      \del[1]{
        \varepsilon\overline{\nabla}w_h
        ,
        \overline{\nabla}\del[0]{\del[0]{I-\Pi_h}\del[0]{eT\exp(-t/T)}w_h}
      }_{\mathcal{T}_h}
      -
      \langle
      \varepsilon\alpha h_K^{-1}\sbr{\boldsymbol{w}_h},
      \del[0]{\Pi_h-\Pi_h^\mathcal{F}}
      \del[0]{eT\exp(-t/T)w_h}
      \rangle_{\mathcal{Q}_h}
      \\
      &
      +
      \langle
      \varepsilon\alpha h_K^{-1}\sbr{\boldsymbol{w}_h},
      \del[0]{I-\Pi_h^{\mathcal{F}}}
      \del[0]{eT\exp(-t/T)\sbr[0]{\boldsymbol{w}_h}}
      \rangle_{\mathcal{Q}_h}
      -
      \langle
      \varepsilon\sbr{\boldsymbol{w}_h},
      \overline{\nabla}_{\bar{{n}}}
      \del[0]{
        \del[0]{I-\Pi_h}\del[0]{eT\exp(-t/T)w_h}
      }
      \rangle_{\mathcal{Q}_h}
      \\
      &
      +
      \langle
      \varepsilon\overline{\nabla}_{\bar{{n}}}{w_h},
      \del[0]{\Pi_h-\Pi_h^\mathcal{F}}\del[0]{eT\exp(-t/T)w_h}
      \rangle_{\mathcal{Q}_h}
      -
      \langle
      \varepsilon\overline{\nabla}_{\bar{{n}}}{w_h},
      \del[0]{I-\Pi_h^\mathcal{F}}\del[0]{eT\exp(-t/T)\sbr[0]{\boldsymbol{w}_h}}
      \rangle_{\mathcal{Q}_h}
      \\
      =:
      &
      M_5+M_6+M_7+M_8+M_9+M_{10}.
    \end{split}
  \end{equation*}
  To bound $M_5$ we use the Cauchy--Schwarz inequality,
  \cref{eq:proj_with_wt_2}, the assumption that
  ${\delta t_{\mathcal{K}}}\leq h_K$, and Young's inequality:
  \begin{equation}
    \label{eq:M_term_6_Y}
    M_5
    \leq
    c
    \sum_{\mathcal{K}\in\mathcal{T}_h}
    \varepsilon\norm[0]{\overline{\nabla}w_h}_\mathcal{K}
    \delta t_{\mathcal{K}} h_K^{-1}
    \norm[0]{ w_h }_\mathcal{K}
    \le
    \tfrac{1}{2}\delta
    \sum_{\mathcal{K}\in\mathcal{T}_h}
    \norm[0]{w_h}_{\mathcal{K}}^2
    +
    c\varepsilon{\delta}^{-1}
    \sum_{\mathcal{K}\in\mathcal{T}_h}
    \varepsilon
    \norm[0]{\overline{\nabla}w_h}_{\mathcal{K}}^2.
  \end{equation}
  To bound $M_6$ we apply the Cauchy--Schwarz inequality,
  \cref{eq:proj_diff_elem_facet_Q}, and Young's inequality:
  \begin{equation}
    \label{eq:M_term_7_Y}
    \begin{split}
      M_6
      \leq&
      c
      \sum_{\mathcal{K}\in\mathcal{T}_h}
      \varepsilon h_K^{-1}
      \norm[0]{\sbr[0]{\boldsymbol{w}_h}}_{\mathcal{Q}_{\mathcal{K}}}
      h_K^{1/2}
      \norm[0]{
        \overline{\nabla}
        \del[0]{eT\exp(-t/T)w_h}
      }_{\mathcal{K}}
      \\
      \le&
      c_T
      \sum_{\mathcal{K}\in\mathcal{T}_h}
      \varepsilon h_K^{-1}
      \norm[0]{\sbr[0]{\boldsymbol{w}_h}}_{\mathcal{Q}_{\mathcal{K}}}^2
      +
      c_T
      \sum_{\mathcal{K}\in\mathcal{T}_h}
      \varepsilon \norm[0]{\overline{\nabla}w_h}_{\mathcal{K}}^2.
    \end{split}
  \end{equation}
  $M_7$ can be bounded using the boundedness of $\Pi_h^{\mathcal{F}}$:
  \begin{equation}
    \label{eq:M_term_7_1}
    M_7
    \leq
    c
    \sum_{\mathcal{K}\in\mathcal{T}_h}
    \varepsilon h_K^{-1}
    \norm[0]{\sbr[0]{\boldsymbol{w}_h}}_{\mathcal{Q}_{\mathcal{K}}}
    \norm[0]{
      \del[0]{I-\Pi_h^{\mathcal{F}}}
      \del[0]{eT\exp(-t/T)\sbr[0]{\boldsymbol{w}_h}}
    }_{\mathcal{Q}_{\mathcal{K}}}
    \leq
    c_T
    \sum_{\mathcal{K}\in\mathcal{T}_h}
    \varepsilon h_K^{-1}
    \norm[0]{\sbr[0]{\boldsymbol{w}_h}}_{\mathcal{Q}_{\mathcal{K}}}^2.
  \end{equation}
  Terms $M_9$ and $M_{10}$ are bounded in a similar way as $M_6$ and
  $M_7$, and using \cref{eq:eg_inv_3}:
  \begin{equation}
    \label{eq:M_term_9_Y}
    M_9+M_{10}
    \le
    c_T
    \sum_{\mathcal{K}\in\mathcal{T}_h}
    \varepsilon h_K^{-1}
    \norm[0]{\sbr[0]{\boldsymbol{w}_h}}_{\mathcal{Q}_{\mathcal{K}}}^2
    +
    c_T
    \sum_{\mathcal{K}\in\mathcal{T}_h}
    \varepsilon
    \norm[0]{\overline{\nabla}w_h}_{\mathcal{K}}^2.
  \end{equation}
  Finally, we bound $M_8$ using the Cauchy--Schwarz inequality,
  \cref{eq:proj_with_wt_5}, the assumption that
  ${\delta t_{\mathcal{K}}}\leq h_K$, and Young's inequality:
  \begin{equation}
    \label{eq:M_term_8_Y}
    \begin{split}
      M_8
      &
      \leq
      \sum_{\mathcal{K}\in\mathcal{T}_h}
      \varepsilon
      \norm[0]{\sbr{\boldsymbol{w}_h}}_{\mathcal{Q}_\mathcal{K}}
      \norm[0]{
        \overline{\nabla}
        \del[0]{(I-\Pi_h)\del[0]{eT\exp(-t/T)w_h}}
      }_{\mathcal{Q}_\mathcal{K}}
      \\
      &
      \leq
      c
      \sum_{\mathcal{K}\in\mathcal{T}_h}
      \varepsilon
      \norm[0]{\sbr{\boldsymbol{w}_h}}_{\mathcal{Q}_\mathcal{K}}
      \delta t_{\mathcal{K}} h_K^{-3/2}
      \norm[0]{ w_h }_{\mathcal{K}}
      \\
      &\le
      \tfrac{1}{2}\delta
      \sum_{\mathcal{K}\in\mathcal{T}_h}
      \norm[0]{w_h}_{\mathcal{K}}^2
      +
      c\varepsilon\delta^{-1}
      \sum_{\mathcal{K}\in\mathcal{T}_h}
      \varepsilon h_K^{-1}
      \norm[0]{\sbr[0]{\boldsymbol{w}_h}}_{\mathcal{Q}_{\mathcal{K}}}^2.
    \end{split}
  \end{equation}
  Collecting
  \cref{eq:M_term_2,eq:M_term_3_4Y,eq:M_term_5_Y,eq:M_term_6_Y,eq:M_term_7_Y,eq:M_term_7_1,eq:M_term_8_Y,eq:M_term_9_Y}
  we find that
  \begin{equation*}
    \begin{split}
      a_h(\boldsymbol{w}_h,\boldsymbol{\delta}\del[0]{\varphi\boldsymbol{w}_h})
      \leq
      &
      c
      \sum_{\mathcal{K}\in\mathcal{T}_h}
      \delta t_{\mathcal{K}}
      \norm[0]{w_h}_\mathcal{K}^2
      +
      2\delta\sum_{\mathcal{K}\in\mathcal{T}_h}\norm[0]{w_h}_{\mathcal{K}}^2
      \\
      &
      +
      \del[0]{c_T+c\delta^{-1}}
      \del[2]{
        \sum_{F\in\partial\mathcal{E}_N}
        \norm[0]{ \envert[0]{ \tfrac{1}{2}\beta\cdot{n} }^{1/2} \varkappa_h }_F^2
        +
        \sum_{\mathcal{K}\in\mathcal{T}_h}
        \norm[0]{
          \envert[0]{
            \beta_s-\tfrac{1}{2}\beta\cdot{n}
          }^{1/2}
          \sbr{\boldsymbol{w}_h}
        }_{\partial\mathcal{K}}^2
      }
      \\
      &
      +
      \del[0]{c_T+c\varepsilon\delta^{-1}}
      \del[2]{
        \sum_{\mathcal{K}\in\mathcal{T}_h}
        \varepsilon
        \norm[0]{\overline{\nabla}w_h}_{\mathcal{K}}^2
        +
        \sum_{\mathcal{K}\in\mathcal{T}_h}
        \varepsilon h_K^{-1}
        \norm[0]{\sbr{\boldsymbol{w}_h}}_{\mathcal{Q}_\mathcal{K}}^2
        }.
    \end{split}
 \end{equation*}
  The result follows by choosing $\delta=1/16$.
\end{proof}

We are now ready to prove \cref{eq:new_inf_sup_v_norm}.

\begin{proof}[Proof of \cref{eq:new_inf_sup_v_norm}]
  Choose $\delta t_0=1/8$. When
  $\delta t_{\mathcal{K}} \le \delta t_0$ for all
  $\mathcal{K}\in\mathcal{T}_h$ we find, by combining
  \cref{lem:inf_sup_v_norm_weighted,lem:inf_sup_v_norm_wtd_proj},
  \begin{equation*}
    \begin{split}
      a_h(\boldsymbol{w}_h,\boldsymbol{\Pi}_h\del[0]{\varphi\boldsymbol{w}_h})
      \ge&
      \del[0]{
        \tfrac{1}{4}({T+\chi})
        -
        c_T
      }
      \del[0]{
        \sum_{\mathcal{K}\in\mathcal{T}_h}
        \varepsilon
        \norm[0]{\overline{\nabla}w_h}_\mathcal{K}^2
        +
        \sum_{\mathcal{K}\in\mathcal{T}_h}
        \varepsilon h_K^{-1}
        \norm[0]{\sbr[0]{\boldsymbol{w}_h}}_{\mathcal{Q}_\mathcal{K}}^2
      }
      +
      \tfrac{1}{4}
      \sum_{\mathcal{K}\in\mathcal{T}_h}
      \norm[0]{w_h}_\mathcal{K}^2
      \\
      &
      +
      \del[0]{
        T+\chi-
        c_T
      }
      \del[0]{
        \sum_{F\in\partial\mathcal{E}_N}
        \norm[0]{
          \envert[0]{\tfrac{1}{2}\beta\cdot{n}}^{1/2}
          \varkappa_h
        }_F^2
        +
        \sum_{\mathcal{K}\in\mathcal{T}_h}
        \norm[0]{
          \envert[0]{
            \beta_s
            -
            \tfrac{1}{2}
            \beta\cdot{n}
          }^{1/2}
          \sbr[0]{\boldsymbol{w}_h}
        }_{\partial\mathcal{K}}^2
      }.
    \end{split}
  \end{equation*}
  Choosing $\chi$ to satisfy $\chi\geq 4 c_T$ in addition to the
  conditions of \cref{lem:inf_sup_v_norm_weighted}, we obtain
  \begin{equation}
    \label{eq:sv_0}
    a_h\del[0]{\boldsymbol{w}_h,\boldsymbol{\Pi}_h\del[0]{\varphi\boldsymbol{w}_h}}
    \geq
    \tfrac{1}{4}
    \tnorm{\boldsymbol{w}_h}_v^2
    \geq
    c_T^{-1}
    \tnorm{\boldsymbol{w}_h}_v
    \tnorm{\boldsymbol{\Pi}_h(\varphi\boldsymbol{w}_h)}_v,
  \end{equation}
  where the second inequality is due to \cref{lem:stab_fortin}. We
  therefore conclude \cref{eq:new_inf_sup_v_norm}.
\end{proof}

%------------------------------------------------------------------------------
\subsection{The inf-sup condition with respect to
  $|\mkern-1.5mu|\mkern-1.5mu|\cdot|\mkern-1.5mu|\mkern-1.5mu|_s$}
\label{ss:snorminfsupcondition}

To prove \cref{eq:new_inf_sup_s_norm}, we first construct the test
function $\boldsymbol{y}_h:=\del{y_h,\vartheta_h}$ as a function of
$\boldsymbol{w}_h=\del{w_h,\varkappa_h}\in \boldsymbol{V}_h$. The
elemental test function $y_h$ is defined as:
\begin{subequations}
  \label{eq:test_func_s_inf_sup_localtimestep}
  \begin{alignat}{2}
    y_h
    &:=
    \tau_\varepsilon\partial_tw_h.
    &
    \label{eq:test_func_s_inf_sup_1_localtimestep}
    \\
    \intertext{To define the facet test function $\vartheta_h$ we
      consider four different sets of facets. First we consider facets
      $F$ in
      $\partial\mathcal{K}_1\cap\partial\mathcal{K}_2\cap\mathcal{Q}_h^i$
      and such that there is no difference in the refinement level in
      the time direction between $\mathcal{K}_1$ and
      $\mathcal{K}_2$. This means that
      $\delta t_{\mathcal{K}_1}=\delta t_{\mathcal{K}_2}:=\delta
      t_{\mathcal{K}}$ and, since $\mathcal{K}_1$ and $\mathcal{K}_2$
      must come from the same space-time slab,
      $\Delta t_{\mathcal{K}_1}=\Delta t_{\mathcal{K}_2}:=\Delta
      t_{\mathcal{K}}$. We then define: }
    \vartheta_h &:=
    \begin{cases}
      {\Delta t_{\mathcal{K}}}\partial_t\varkappa_h,
      &
		\delta t_{\mathcal{K}}\leq h_{K_1}\leq\varepsilon,
      \delta t_{\mathcal{K}}\leq h_{K_2}\leq\varepsilon,
      \\
      {\Delta t_{\mathcal{K}}}\varepsilon^{1/2}\partial_t\varkappa_h,
      &
      \delta t_{\mathcal{K}}\leq \varepsilon<h_{K_1},
      \delta t_{\mathcal{K}}\leq \varepsilon<h_{K_2},
      \\
      0,
      &
      \text{otherwise}.
    \end{cases}
    &
    \label{eq:test_func_s_inf_sup_2_localtimestep}
    \\
    \intertext{We next consider facets $F$ in
      $\partial\mathcal{K}_1\cap\partial\mathcal{K}_2\cap\mathcal{Q}_h^i$
      and such that there is one level of refinement difference
      between $\mathcal{K}_1$ and $\mathcal{K}_2$ in the time
      direction.  Without loss of generality, we assume that
      $2\delta t_{\mathcal{K}_1}= \delta
      t_{\mathcal{K}_2}$. Furthermore, since $\mathcal{K}_1$ and
      $\mathcal{K}_2$ must come from the same space-time slab,
      $\Delta t_{\mathcal{K}_1}=\Delta t_{\mathcal{K}_2}:=\Delta
      t_{\mathcal{K}}$. We then define: }
    \vartheta_h &:=
    \begin{cases}
      {\Delta t_{\mathcal{K}}}\partial_t\varkappa_h,
      &
		\delta t_{\mathcal{K}_1}\leq h_{K_1}\leq\varepsilon,
      \delta t_{\mathcal{K}_2}\leq h_{K_2}\leq\varepsilon,
      \\
      {\Delta t_{\mathcal{K}}}\varepsilon^{1/2}\partial_t\varkappa_h,
      &
      \delta t_{\mathcal{K}_1}\leq\varepsilon<h_{K_1},
      \delta t_{\mathcal{K}_2}\leq\varepsilon<h_{K_2},
      \\
      0,
      &
      \text{otherwise}.
    \end{cases}
    &
    \label{eq:test_func_s_inf_sup_3_localtimestep}
    \\
    \intertext{For facets $F$ in
      $\partial\mathcal{K}\cap\mathcal{Q}_h^b$, we define: }
    \vartheta_h
    &:=
    \begin{cases}
      {\Delta t_{\mathcal{K}}}\partial_t\varkappa_h,
      &
		\delta t_{\mathcal{K}}\le h_{K}\leq\varepsilon,
      \\
      {\Delta t_{\mathcal{K}}}\varepsilon^{1/2}\partial_t\varkappa_h,
      &
      \delta t_{\mathcal{K}}\le \varepsilon < h_{K},
      \\
      0,
      &
      \text{otherwise}.
    \end{cases}
    &
    \label{eq:test_func_s_inf_sup_4_localtimestep}
    \\
    \intertext{Finally, for facets $F$ in $\mathcal{R}_h$, we define: }
    \vartheta_h &:= 0.  &
    \label{eq:test_func_s_inf_sup_5_localtimestep}
  \end{alignat}
\end{subequations}
We observe from definition \cref{eq:test_func_s_inf_sup_localtimestep}
that $\vartheta_h\equiv 0$ on $\partial\mathcal{T}_h^c$, which denotes
the set of element boundaries of space-time elements in
$\mathcal{T}_h^c$. Furthermore, for any space-time element
$\mathcal{K}\in\mathcal{T}_h^{dx}:=\mathcal{T}_h^d\cup\mathcal{T}_h^x$,
we introduce $\mathcal{Q}_\mathcal{K}^0$ to denote those
$\mathcal{Q}$-faces on which $\vartheta_h$ is prescribed in
\cref{eq:test_func_s_inf_sup_2_localtimestep,eq:test_func_s_inf_sup_3_localtimestep}
to be zero. We will define
$\mathcal{Q}_h^0 :=
\cup_{\mathcal{K}\in\mathcal{T}_h}\mathcal{Q}_{\mathcal{K}}^0$. Consider
now $\mathcal{K}\in\mathcal{T}_h^{dx,0}$, which denotes the set of
space-time elements in $\mathcal{T}_h^{dx}$ for which
$\mathcal{Q}_{\mathcal{K}}^0 \ne \emptyset$. Then, there exists a
$\mathcal{K}'$ such that
$\partial\mathcal{K}'\cap\partial\mathcal{K}\neq\emptyset$ and that
either $h_K\leq\varepsilon\leq h_{K'}$ (or
$h_{K'}\leq\varepsilon\leq h_{K}$), or
$\delta t_{\mathcal{K}}\leq\varepsilon\leq \delta t_{\mathcal{K}'}$
(or
$\delta t_{\mathcal{K}'}\leq\varepsilon\leq \delta t_{\mathcal{K}}$).
For the former case, since spatial elements are shape-regular and the
difference of refinement levels in the spatial direction between two
adjacent space-time elements is at most one, we have
$c^{-1}h_{K'}\leq h_K\leq ch_{K'}$. If the latter case holds, since
$\delta t_{\mathcal{K}}=\tfrac{1}{2} \delta t_{\mathcal{K}'}$, it
holds that $\delta t_{\mathcal{K}}\sim\varepsilon$. Therefore,
\begin{equation}
  \label{eq:q0_const_localtimestep}
  c^{-1}
  h_K
  \leq
  \varepsilon
  \leq ch_K
  \quad
  \text{ or }
  \quad
  c^{-1}
  \delta t_{\mathcal{K}}
  \leq
  \varepsilon
  \leq c \delta t_{\mathcal{K}}
  \qquad
  \forall
  \mathcal{K}\in\mathcal{T}_h^{dx,0}.
\end{equation}

\Cref{lem:zh_bounded_localtimestep,lem:s_inf_sup_bounded_below_localtimestep}
will be used to prove \cref{eq:new_inf_sup_s_norm}. The proofs of
these lemmas will repeatedly use the following set of inequalities:
For all $\mathcal{K}\in\mathcal{T}_h$,
\begin{equation}
  \label{eq:setofusefulineq}
  h_K^{-1}\leq \delta t_{\mathcal{K}}^{-1},
  \quad
  \Delta t_{\mathcal{K}}\leq
  c
  \delta t_{\mathcal{K}},
  \quad
  \tau_\varepsilon\leq\Delta t_{\mathcal{K}},
  \quad
  \varepsilon \le 1.
\end{equation}

\begin{lemma}
  \label{lem:zh_bounded_localtimestep}
  Assume that ${\delta t_{\mathcal{K}}}\leq h_K$ for all space-time
  elements $\mathcal{K}\in\mathcal{T}_h$. Let
  $\boldsymbol{w}_h=\del{w_h,\varkappa_h}\in \boldsymbol{V}_h$ and let
  $\boldsymbol{y}_h$ be defined by
  \cref{eq:test_func_s_inf_sup_localtimestep}. The following holds:
  \begin{equation}
    \label{eq:zh_bounded_localtimestep}
    \tnorm{\boldsymbol{y}_h}_s
    \leq
    c
    \tnorm{\boldsymbol{w}_h}_s.
  \end{equation}
\end{lemma}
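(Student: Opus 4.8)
The plan is to expand $\tnorm{\boldsymbol{y}_h}_s^2 = \tnorm{\boldsymbol{y}_h}_v^2 + \sum_{\mathcal{K}}\tau_\varepsilon\norm[0]{\partial_t y_h}_\mathcal{K}^2$ into the six sums of \cref{eq:norm-defs} and to dominate each one by a sum already present in $\tnorm{\boldsymbol{w}_h}_s^2$, using $\boldsymbol{y}_h=(\tau_\varepsilon\partial_t w_h,\vartheta_h)$ together with: the inverse and trace inequalities \cref{eq:eg_inv} and \cref{lem:eg_inv}, which we also apply to $\partial_t w_h$, $\partial_t^2 w_h$, $\overline{\nabla}\partial_t w_h$ and to $\sbr[0]{\boldsymbol{w}_h}$ restricted to a $\mathcal{Q}$-face (these reduce, via $\phi_{\mathcal{K}}$, to polynomial inverse estimates on the affine brick $\widetilde{\mathcal{K}}$ or an affine face, with uniform constants by \cref{eq:diffeom_regular}--\cref{eq:diffeom_regular_d}; note that for $\sbr[0]{\boldsymbol{w}_h}|_F$ the proof of \cref{lem:eg_inv} applies verbatim since that proof only uses that $\sbr[0]{\boldsymbol{w}_h}|_F\circ\phi_{F_{\mathcal{Q}}}\circ G_{\mathcal{K}}$ is a polynomial of degree $(p_t,p_s)$); the inequalities \cref{eq:setofusefulineq}; and the pointwise bounds $\tfrac12\beta_s\le\beta_s-\tfrac12\beta\cdot n$ (from \cref{eq:betasinfmax}), $\beta_s-\tfrac12\beta\cdot n\le\tfrac32\beta_s\le c$ and $\tfrac12|\beta\cdot n|\le\beta_s\le c$, which yield the weight equivalences $\beta_s\norm[0]{g}_F^2\le 2\norm[0]{\envert[0]{\beta_s-\tfrac12\beta\cdot n}^{1/2}g}_F^2$ and $\norm[0]{\envert[0]{\tfrac12\beta\cdot n}^{1/2}g}_F^2\le 2\norm[0]{\envert[0]{\beta_s-\tfrac12\beta\cdot n}^{1/2}g}_F^2$ used to move weights around.

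For the three volume sums I would argue directly, using $\tau_\varepsilon\le c\,\delta t_{\mathcal{K}}$ and $h_K^{-1}\le\delta t_{\mathcal{K}}^{-1}$: $\norm[0]{y_h}_\mathcal{K}^2=\tau_\varepsilon^2\norm[0]{\partial_t w_h}_\mathcal{K}^2\le c\,\tau_\varepsilon^2\delta t_{\mathcal{K}}^{-2}\norm[0]{w_h}_\mathcal{K}^2\le c\,\norm[0]{w_h}_\mathcal{K}^2$; $\tau_\varepsilon\norm[0]{\partial_t y_h}_\mathcal{K}^2=\tau_\varepsilon^3\norm[0]{\partial_t^2 w_h}_\mathcal{K}^2\le c\,\tau_\varepsilon^3\delta t_{\mathcal{K}}^{-2}\norm[0]{\partial_t w_h}_\mathcal{K}^2\le c\,\tau_\varepsilon\norm[0]{\partial_t w_h}_\mathcal{K}^2$; and $\varepsilon\norm[0]{\overline{\nabla}y_h}_\mathcal{K}^2=\varepsilon\tau_\varepsilon^2\norm[0]{\overline{\nabla}\partial_t w_h}_\mathcal{K}^2\le c\,\tau_\varepsilon^2\delta t_{\mathcal{K}}^{-2}\,\varepsilon\norm[0]{\overline{\nabla}w_h}_\mathcal{K}^2\le c\,\varepsilon\norm[0]{\overline{\nabla}w_h}_\mathcal{K}^2$. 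Summing over $\mathcal{K}$, these are controlled by $\tnorm{\boldsymbol{w}_h}_s^2$.

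The facet sums are handled by the case split of \cref{eq:test_func_s_inf_sup_localtimestep}. On every face of $\partial\mathcal{K}$ where $\vartheta_h=0$ (all of $\mathcal{R}_\mathcal{K}$, all faces of $\mathcal{Q}_{\mathcal{K}}^0$, and all $\mathcal{Q}$-faces when $\mathcal{K}\in\mathcal{T}_h^c$) we have $\sbr[0]{\boldsymbol{y}_h}=y_h=\tau_\varepsilon\partial_t w_h$, so bounding the weight by $c$ and using \cref{eq:eg_inv_4} gives $\norm[0]{\envert[0]{\beta_s-\tfrac12\beta\cdot n}^{1/2}\sbr[0]{\boldsymbol{y}_h}}_F^2\le c\,\beta_s\tau_\varepsilon^2\delta t_{\mathcal{K}}^{-1}\norm[0]{\partial_t w_h}_\mathcal{K}^2\le c\,\tau_\varepsilon\norm[0]{\partial_t w_h}_\mathcal{K}^2$ (since $\beta_s\tau_\varepsilon\delta t_{\mathcal{K}}^{-1}\le c$), while \cref{eq:eg_inv_3} gives $\varepsilon h_K^{-1}\norm[0]{\sbr[0]{\boldsymbol{y}_h}}_{\mathcal{Q}_{\mathcal{K}}^0}^2\le c\,\varepsilon h_K^{-2}\tau_\varepsilon^2\norm[0]{\partial_t w_h}_\mathcal{K}^2$, which is $\le c\,\tau_\varepsilon\norm[0]{\partial_t w_h}_\mathcal{K}^2$ as soon as $\varepsilon h_K^{-2}\tau_\varepsilon\le c$: this is automatic when $\varepsilon<h_K$ (so for $\mathcal{T}_h^x$ and $\mathcal{T}_h^c$) and for $\mathcal{K}\in\mathcal{T}_h^d$ carrying a face of $\mathcal{Q}_h^0$ it follows from \cref{eq:q0_const_localtimestep}, which then forces $\varepsilon\sim h_K$. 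On a $\mathcal{Q}$-face where $\vartheta_h=\tau_\varepsilon\partial_t\varkappa_h$ with the same $\tau_\varepsilon$ on both sides (which is the case, because the branches of \cref{eq:test_func_s_inf_sup_localtimestep} giving a nonzero $\vartheta_h$ place both adjacent elements into the same one of $\mathcal{T}_h^d,\mathcal{T}_h^x$) we instead have $\sbr[0]{\boldsymbol{y}_h}=\tau_\varepsilon\partial_t\sbr[0]{\boldsymbol{w}_h}$; \cref{lem:eg_inv} applied to $\sbr[0]{\boldsymbol{w}_h}|_F$ gives $\norm[0]{\partial_t\sbr[0]{\boldsymbol{w}_h}}_F\le c\,\delta t_{\mathcal{K}}^{-1}\norm[0]{\sbr[0]{\boldsymbol{w}_h}}_F$, and with $\tau_\varepsilon\le c\,\delta t_{\mathcal{K}}$ and the weight equivalences this yields $\norm[0]{\envert[0]{\beta_s-\tfrac12\beta\cdot n}^{1/2}\sbr[0]{\boldsymbol{y}_h}}_F^2\le c\,\norm[0]{\envert[0]{\beta_s-\tfrac12\beta\cdot n}^{1/2}\sbr[0]{\boldsymbol{w}_h}}_F^2$ and $\varepsilon h_K^{-1}\norm[0]{\sbr[0]{\boldsymbol{y}_h}}_F^2\le c\,\varepsilon h_K^{-1}\norm[0]{\sbr[0]{\boldsymbol{w}_h}}_F^2$. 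Finally, on a Neumann $\mathcal{Q}$-face with $\vartheta_h=\tau_\varepsilon\partial_t\varkappa_h$ I would write $\vartheta_h=y_h-\tau_\varepsilon\partial_t\sbr[0]{\boldsymbol{w}_h}$ (using $\varkappa_h=w_h-\sbr[0]{\boldsymbol{w}_h}$ on $F$) and estimate the two pieces as above to get $\norm[0]{\envert[0]{\tfrac12\beta\cdot n}^{1/2}\vartheta_h}_F^2\le c\,\tau_\varepsilon\norm[0]{\partial_t w_h}_{\mathcal{K}_F}^2+c\,\norm[0]{\envert[0]{\beta_s-\tfrac12\beta\cdot n}^{1/2}\sbr[0]{\boldsymbol{w}_h}}_{\partial\mathcal{K}_F}^2$; Neumann $\mathcal{R}$-faces contribute nothing since $\vartheta_h=0$ there. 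Summing over the uniformly bounded number of faces of each $\mathcal{K}$ and over $\mathcal{T}_h$ gives $\tnorm{\boldsymbol{y}_h}_s^2\le c\,\tnorm{\boldsymbol{w}_h}_s^2$, with a constant depending only on $p_t,p_s$, the shape-regularity, and $\beta$.

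The main obstacle is the $\mathcal{Q}$-faces on which $\vartheta_h$ had to be switched off because the two adjacent space-time elements belong to different categories or refinement levels: there $\sbr[0]{\boldsymbol{y}_h}$ collapses to the element quantity $y_h$ rather than to $\tau_\varepsilon\partial_t\sbr[0]{\boldsymbol{w}_h}$, so the $\mathcal{Q}$-jump sum produces a factor $\varepsilon h_K^{-1}$ that is not small when $\varepsilon\ge h_K$, and the only route to absorbing it into $\tnorm{\boldsymbol{w}_h}_s^2$ is the comparison $\varepsilon\sim h_K$ (or $\varepsilon\sim\delta t_{\mathcal{K}}$) guaranteed on exactly those elements by \cref{eq:q0_const_localtimestep}. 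A secondary, purely technical point, recorded at the outset, is that the inverse and trace inequalities must be used for $\partial_t w_h$, $\partial_t^2 w_h$, $\overline{\nabla}\partial_t w_h$ and for $\sbr[0]{\boldsymbol{w}_h}$ on a $\mathcal{Q}$-face, which is justified by pulling back to the affine brick and its faces.
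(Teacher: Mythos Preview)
Your proposal is correct and follows essentially the same route as the paper: both decompose $\tnorm{\boldsymbol{y}_h}_s^2$ term by term, use the inverse and trace inequalities \cref{eq:eg_inv}, \cref{lem:eg_inv} together with \cref{eq:setofusefulineq}, and treat the facet sums by the same case split into faces where $\vartheta_h=\tau_\varepsilon\partial_t\varkappa_h$ (giving $\sbr[0]{\boldsymbol{y}_h}=\tau_\varepsilon\partial_t\sbr[0]{\boldsymbol{w}_h}$) and faces where $\vartheta_h=0$ (giving $\sbr[0]{\boldsymbol{y}_h}=y_h$), with \cref{eq:q0_const_localtimestep} resolving the only delicate case $\mathcal{K}\in\mathcal{T}_h^{d}$ carrying a $\mathcal{Q}_{\mathcal{K}}^0$-face. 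The only noteworthy difference is cosmetic: on the nonzero-$\vartheta_h$ faces you absorb the advective jump of $\boldsymbol{y}_h$ directly into the advective jump of $\boldsymbol{w}_h$ via the weight equivalence $\tfrac12\beta_s\le\beta_s-\tfrac12\beta\cdot n\le\tfrac32\beta_s$, whereas the paper routes the same contribution through the diffusive jump term using $\tilde{\varepsilon}^{2}\le\varepsilon h_K^{-1}$ on $\mathcal{T}_h^{dx}$; and for $\norm[0]{y_h}_{\mathcal{K}}^2$ you bound by $\norm[0]{w_h}_{\mathcal{K}}^2$ while the paper bounds by $\tau_\varepsilon\norm[0]{\partial_t w_h}_{\mathcal{K}}^2$.
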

\begin{proof}
  We start with the volume terms of $\tnorm{\cdot}_s$. Using
  \cref{eq:eg_inv_1} and \cref{eq:setofusefulineq}, we have:
  \begin{equation}
    \label{eq:zh_bounded_1_localtimestep}
    \sum_{\mathcal{K}\in\mathcal{T}_h}
    \norm[0]{y_h}_\mathcal{K}^2
    \le
    \sum_{\mathcal{K}\in\mathcal{T}_h}
    \tau_{\varepsilon}\norm[0]{\partial_tw_h}_\mathcal{K}^2.
  \end{equation}
  For the diffusive volume term, using commutativity of
  $\overline{\nabla}$ and $\partial_t$, \cref{eq:eg_inv_1} and
  \cref{eq:setofusefulineq}:
  \begin{equation}
    \label{eq:zh_bounded_2_localtimestep}
    \sum_{\mathcal{K}\in\mathcal{T}_h}
    \varepsilon\norm[0]{\overline{\nabla}y_h}_\mathcal{K}^2
    =
    \sum_{\mathcal{K}\in\mathcal{T}_h}
    \tau_{\varepsilon}^2\varepsilon\norm[0]{\partial_t\del[0]{\overline{\nabla}w_h}}_\mathcal{K}^2
    \leq
    c
    \sum_{\mathcal{K}\in\mathcal{T}_h}
    \varepsilon\norm[0]{{\overline{\nabla}w_h}}_\mathcal{K}^2.
  \end{equation}
  The time-derivative volume term is treated similarly, using
  \cref{eq:setofusefulineq}:
  \begin{equation}
    \label{eq:zh_bounded_3_localtimestep}
    \sum_{\mathcal{K}\in\mathcal{T}_h}
    \tau_\varepsilon
    \norm[0]{\partial_ty_h}_\mathcal{K}^2
    \leq
    c
    \sum_{\mathcal{K}\in\mathcal{T}_h}
    \tau_\varepsilon
    \norm[0]{{\partial_tw_h}}_\mathcal{K}^2.
  \end{equation}
  For the diffusive facet term in the definition of $\tnorm{\cdot}_s$,
  we use \cref{lem:eg_inv}, \cref{eq:eg_inv_3}, that $\vartheta_h$
  vanishes on $\partial\mathcal{T}_h^c$, that $\vartheta_h$ vanishes
  on $\mathcal{Q}_\mathcal{K}^0$ when
  $\mathcal{K}\in\mathcal{T}_h^{dx}$, that
  $\varepsilon\le\delta t_{\mathcal{K}}$ and
  $\varepsilon h_K^{-2}\tau_\varepsilon\leq1$ on $\mathcal{T}_h^c$,
  and that $\varepsilon h_K^{-2}\tau_\varepsilon\leq c$ on
  $\mathcal{T}_h^{dx,0}$ due to \cref{eq:q0_const_localtimestep}:
  \begin{equation}
    \label{eq:zh_bounded_4_localtimestep}
    \begin{split}
      &
      \sum_{\mathcal{K}\in\mathcal{T}_h}
      \varepsilon h_K^{-1}
      \norm[0]{\sbr{\boldsymbol{y}_h}}_{\mathcal{Q}_\mathcal{K}}^2
      \\
      =
      &
      \sum_{\mathcal{K}\in\mathcal{T}_h^{dx}}
      \varepsilon h_K^{-1}
      \tau_\varepsilon^2
      \norm[0]{
        \sbr{\partial_t\boldsymbol{w}_h}
      }_{\mathcal{Q}_\mathcal{K}\setminus\mathcal{Q}_\mathcal{K}^0}^2
      +
      \sum_{\mathcal{K}\in\mathcal{T}_h^{dx}}
      \varepsilon h_K^{-1}
      \tau_\varepsilon^2
      \norm[0]{
        {\partial_t{w}_h}
      }_{\mathcal{Q}_\mathcal{K}^0}^2
      +
      \sum_{\mathcal{K}\in\mathcal{T}_h^c}
      \varepsilon h_K^{-1}
      \tau_\varepsilon^2
      \norm[0]{ \partial_t{w}_h }_{\mathcal{Q}_\mathcal{K}}^2
      \\
      \leq
      &
      c
      \sum_{\mathcal{K}\in\mathcal{T}_h}
      \varepsilon h_K^{-1}
      \norm[0]{\sbr{\boldsymbol{w}_h}}_{\mathcal{Q}_\mathcal{K}}^2
      +
      c
      \sum_{\mathcal{K}\in\mathcal{T}_h}
      \tau_{\varepsilon}
      \norm[0]{ \partial_t{w}_h }_{\mathcal{K}}^2.
    \end{split}
  \end{equation}
  For the advective facet term, using \cref{lem:eg_inv},
  \cref{eq:eg_inv_4}, \cref{eq:setofusefulineq}, that
  $\tilde{\varepsilon}^2\leq\varepsilon h_K^{-1}$ on
  $\mathcal{K}\in\mathcal{T}_h^{dx}$ since $h_K\leq\varepsilon$ on
  $\mathcal{T}_h^d$ and $\tilde{\varepsilon}=\varepsilon^{1/2}$ on $\mathcal{T}_h^x$:
  \begin{equation}
    \label{eq:zh_bounded_5_localtimestep}
    \begin{split}
      &
      \sum_{\mathcal{K}\in\mathcal{T}_h}
      \norm[0]{
        \envert[0]{\beta_s-\tfrac{1}{2}\beta\cdot{n}}^{1/2}
        \sbr{\boldsymbol{y}_h}
      }_{\partial\mathcal{K}}^2
      \\
      \le
      &
      c
      \del[2]{
        \sum_{\mathcal{K}\in\mathcal{T}_h^{dx}}
        \tau_\varepsilon^2
        \norm[0]{
          \sbr{\partial_t\boldsymbol{w}_h}
        }_{\mathcal{Q}_\mathcal{K}\setminus\mathcal{Q}_\mathcal{K}^0}^2
        +
        \sum_{\mathcal{K}\in\mathcal{T}_h^{dx}}
        \tau_\varepsilon^2
        \norm[0]{
          \partial_t{w}_h
        }_{\mathcal{R}_\mathcal{K}\cup\mathcal{Q}_\mathcal{K}^0}^2
        +
        \sum_{\mathcal{K}\in\mathcal{T}_h^c}
        \tau_\varepsilon^2
        \norm[0]{
          {\partial_t{w}_h}
        }_{\partial\mathcal{K}}^2
      }
      \\
      \leq
      &
      c
      \del[2]{
        \sum_{\mathcal{K}\in\mathcal{T}_h^{dx}}
        \varepsilon h_K^{-1}
        \norm[0]{
          \sbr{\boldsymbol{w}_h}
        }_{\mathcal{Q}_\mathcal{K}\setminus\mathcal{Q}_\mathcal{K}^0}^2
      }
      +
      c
      \sum_{\mathcal{K}\in\mathcal{T}_h^c}
      \tau_\varepsilon\norm[0]{\partial_tw_h}_{\mathcal{K}}^2.
    \end{split}
  \end{equation}
  Finally, the Neumann boundary term is bounded using the triangle
  inequality, Young's inequality, \cref{lem:eg_inv},
  \cref{eq:eg_inv_3}, \cref{eq:setofusefulineq}, and that
  $h_K\leq \varepsilon$ for $\mathcal{K}\in\mathcal{T}_h^d$:
  \begin{equation}
    \label{eq:zh_bounded_6_localtimestep}
    \begin{split}
      \sum_{\mathcal{K}\in\mathcal{T}_h^{dx}}
      \norm[0]{
        \envert[0]{\tfrac{1}{2}\beta\cdot{n}}^{1/2}
        \vartheta_h
      }_{
        \mathcal{Q}_\mathcal{K}
        \cap
        \partial\mathcal{E}_N
      }^2
      \leq
      &
      c
      \sum_{\mathcal{K}\in{\mathcal{T}_h^{dx}}}
      \tau_\varepsilon^2
      \norm[0]{
        \sbr{\partial_t \boldsymbol{w}_h}
      }_{
        \mathcal{Q}_\mathcal{K}
        \cap
        \partial\mathcal{E}_N
      }^2
      +
      c
      \sum_{\mathcal{K}\in{\mathcal{T}_h^{dx}}}
      \tau_\varepsilon^2
      \norm[0]{
        \partial_tw_h
      }_{
        \mathcal{Q}_\mathcal{K}
        \cap
        \partial\mathcal{E}_N
      }^2
      \\
      \leq
      &
      c
      \sum_{\mathcal{K}\in{\mathcal{T}_h^{dx}}}
      \varepsilon h_K^{-1}
      \norm[0]{
        \sbr{\boldsymbol{w}_h}
      }_{
        \mathcal{Q}_\mathcal{K}
        \cap
        \partial\mathcal{E}_N
      }^2
      +
      c
      \sum_{\mathcal{K}\in{\mathcal{T}_h^{dx}}}
      \tau_\varepsilon
      \norm[0]{
        \partial_tw_h
      }_{ \mathcal{K} }^2.
    \end{split}
  \end{equation}
  Combining
  \crefrange{eq:zh_bounded_1_localtimestep}{eq:zh_bounded_6_localtimestep}
  yields \cref{eq:zh_bounded_localtimestep}.
\end{proof}

\begin{lemma}
  \label{lem:s_inf_sup_bounded_below_localtimestep}
  Assume that $\delta t_{\mathcal{K}}\leq h_K$ for all space-time
  elements $\mathcal{K} \in \mathcal{T}_h$. Let
  $\boldsymbol{w}_h=\del{w_h,\varkappa_h}\in \boldsymbol{V}_h$, let
  $\boldsymbol{y}_h$ be defined as in
  \cref{eq:test_func_s_inf_sup_localtimestep}, and let
  $\boldsymbol{\Pi}_h\del[0]{\varphi\boldsymbol{w}_h}$ be defined as
  in \cref{lem:stab_fortin}. There exists a positive constant $c$
  such that
  \begin{equation*}
    \tnorm{\boldsymbol{w}_h}_s^2
    \leq
    a_h(\boldsymbol{w}_h,2\del[0]{\boldsymbol{y}_h+
      c
      \boldsymbol{\Pi}_h\del[0]{\varphi\boldsymbol{w}_h}}).
  \end{equation*}
\end{lemma}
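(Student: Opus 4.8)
The plan is to split $a_h=a_{h,c}+a_{h,d}$ and combine the already–established $v$-norm lower bound $a_h(\boldsymbol{w}_h,\boldsymbol{\Pi}_h(\varphi\boldsymbol{w}_h))\ge\tfrac14\tnorm{\boldsymbol{w}_h}_v^2$ from \eqref{eq:sv_0} with a separate bound on $a_h(\boldsymbol{w}_h,\boldsymbol{y}_h)$ that supplies the extra streamline term $\sum_{\mathcal{K}}\tau_\varepsilon\norm[0]{\partial_tw_h}_{\mathcal{K}}^2$ appearing in $\tnorm{\cdot}_s^2=\tnorm{\cdot}_v^2+\sum_{\mathcal{K}}\tau_\varepsilon\norm[0]{\partial_tw_h}_{\mathcal{K}}^2$ but not in $\tnorm{\cdot}_v^2$. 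Precisely, I will show that there is a constant $c_1$, independent of $h_K$, $\delta t_{\mathcal{K}}$ and $\varepsilon$, such that
\[
  a_h(\boldsymbol{w}_h,\boldsymbol{y}_h)\ \ge\ \tfrac12\sum_{\mathcal{K}\in\mathcal{T}_h}\tau_\varepsilon\norm[0]{\partial_tw_h}_{\mathcal{K}}^2\ -\ c_1\tnorm{\boldsymbol{w}_h}_v^2 .
\]
Granting this, linearity of $a_h(\boldsymbol{w}_h,\cdot)$ in its second argument gives $a_h(\boldsymbol{w}_h,2\boldsymbol{y}_h+2c\boldsymbol{\Pi}_h(\varphi\boldsymbol{w}_h))\ge\sum_{\mathcal{K}}\tau_\varepsilon\norm[0]{\partial_tw_h}_{\mathcal{K}}^2+(\tfrac c2-2c_1)\tnorm{\boldsymbol{w}_h}_v^2$, so choosing $c\ge 4c_1+2$ bounds the right-hand side below by $\tnorm{\boldsymbol{w}_h}_s^2$, which is the assertion.

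For the displayed bound I use the rewriting \eqref{eq:some_eq_112} of $a_{h,c}$ with $\boldsymbol{z}=\boldsymbol{y}_h$. Its only volume contribution is $(\beta\cdot\nabla w_h,y_h)_{\mathcal{T}_h}$; since $\beta\cdot\nabla w_h=\partial_tw_h+\bar\beta\cdot\overline{\nabla}w_h$ and $y_h=\tau_\varepsilon\partial_tw_h$, this equals $\sum_{\mathcal{K}}\tau_\varepsilon\norm[0]{\partial_tw_h}_{\mathcal{K}}^2+\sum_{\mathcal{K}}\tau_\varepsilon(\bar\beta\cdot\overline{\nabla}w_h,\partial_tw_h)_{\mathcal{K}}$. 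Using $\norm[0]{\bar\beta}_{L^\infty}\le1$, Young's inequality, and the scaling $\tau_\varepsilon\le c\varepsilon$ — which follows case by case from the three definitions of $\tau_\varepsilon$ together with $\Delta t_{\mathcal{K}}\le c\delta t_{\mathcal{K}}\le ch_K$ and $\varepsilon\le1$ — the cross term costs at most $\tfrac14\sum_{\mathcal{K}}\tau_\varepsilon\norm[0]{\partial_tw_h}_{\mathcal{K}}^2+c\sum_{\mathcal{K}}\varepsilon\norm[0]{\overline{\nabla}w_h}_{\mathcal{K}}^2$, the latter a $\tnorm{\cdot}_v^2$ term. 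All remaining contributions — the facet terms of \eqref{eq:some_eq_112} pairing $\jump{\boldsymbol{w}_h}$ or $\varkappa_h$ against $y_h$, $\vartheta_h$ or $\jump{\boldsymbol{y}_h}$, together with the four terms of $a_{h,d}(\boldsymbol{w}_h,\boldsymbol{y}_h)$ pairing against $y_h$, $\jump{\boldsymbol{y}_h}$ and $\overline{\nabla}_{\bar n}y_h$ — are estimated exactly as in the proof of \cref{lem:zh_bounded_localtimestep}: Cauchy--Schwarz, the anisotropic trace and inverse inequalities \eqref{eq:eg_inv} and \cref{lem:eg_inv}, the elementary relations \eqref{eq:setofusefulineq} (notably $\tau_\varepsilon\delta t_{\mathcal{K}}^{-1}\le c$ and $\tilde\varepsilon^2\le c\varepsilon h_K^{-1}$ on $\mathcal{T}_h^{dx}$), and, on faces of $\mathcal{Q}_h^0$, the relation \eqref{eq:q0_const_localtimestep}, which gives $\varepsilon h_K^{-2}\tau_\varepsilon\le c$ there. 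After Young's inequality with a small parameter $\delta$, each such term is bounded by $\delta\sum_{\mathcal{K}}\tau_\varepsilon\norm[0]{\partial_tw_h}_{\mathcal{K}}^2$ plus a constant multiple of the $v$-norm quantities $\sum_{\mathcal{K}}\norm[0]{\envert[0]{\beta_s-\tfrac12\beta\cdot n}^{1/2}\jump{\boldsymbol{w}_h}}_{\partial\mathcal{K}}^2$, $\sum_{F\in\partial\mathcal{E}_N}\norm[0]{\envert[0]{\tfrac12\beta\cdot n}^{1/2}\varkappa_h}_F^2$, $\sum_{\mathcal{K}}\varepsilon\norm[0]{\overline{\nabla}w_h}_{\mathcal{K}}^2$ and $\sum_{\mathcal{K}}\varepsilon h_K^{-1}\norm[0]{\jump{\boldsymbol{w}_h}}_{\mathcal{Q}_{\mathcal{K}}}^2$. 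Choosing $\delta$ small enough that the accumulated coefficient of $\sum_{\mathcal{K}}\tau_\varepsilon\norm[0]{\partial_tw_h}_{\mathcal{K}}^2$ stays below $\tfrac14$, and collecting the rest into $c_1\tnorm{\boldsymbol{w}_h}_v^2$, yields the displayed bound.

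I expect the main obstacle to be the facet bookkeeping rather than any individual inequality. The test function $\vartheta_h$ is defined by four cases — keyed to whether $F$ is an interior $\mathcal{Q}$-face without or with a refinement jump, a boundary $\mathcal{Q}$-face, or an $\mathcal{R}$-face, and, within those, to the classification $\mathcal{T}_h^d$, $\mathcal{T}_h^x$, $\mathcal{T}_h^c$ — so the interior $\mathcal{Q}$-faces with a refinement jump, the faces of $\mathcal{Q}_h^0$ (on which $\vartheta_h\equiv0$, so a boundary term in $\partial_tw_h$ survives and must be absorbed using $\varepsilon h_K^{-2}\tau_\varepsilon\le c$), and the Neumann $\mathcal{Q}$-faces each need their own scaling check. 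On the Neumann $\mathcal{Q}$-faces one must also deal with the $\envert[0]{\beta\cdot n}$ weight: I would use $\envert[0]{\beta\cdot n}\le\sqrt2$ to drop the weight where convenient and, where needed, replace $\varkappa_h$ by $w_h-\jump{\boldsymbol{w}_h}$ and use the trace inequality, so that nothing worse than $\tnorm{\cdot}_v^2$ quantities and an $\varepsilon$-independent small multiple of $\sum_{\mathcal{K}}\tau_\varepsilon\norm[0]{\partial_tw_h}_{\mathcal{K}}^2$ is produced. Once this is in place, the final linear combination and the choice of $c$ are routine.
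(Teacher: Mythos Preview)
Your proposal is correct and follows essentially the same route as the paper: obtain the intermediate bound $a_h(\boldsymbol{w}_h,2\boldsymbol{y}_h)\ge\sum_{\mathcal{K}}\tau_\varepsilon\norm[0]{\partial_tw_h}_{\mathcal{K}}^2-c\tnorm{\boldsymbol{w}_h}_v^2$ (the paper's \eqref{eq:ss_6_localtimestep}) by isolating the volume streamline term and absorbing all facet and cross terms into $\tnorm{\cdot}_v^2$, then add a suitable multiple of the $v$-norm lower bound \eqref{eq:sv_0}. The only notable difference is in how $a_{h,d}(\boldsymbol{w}_h,\boldsymbol{y}_h)$ is handled: the paper bounds it in one stroke via the boundedness estimate \eqref{eq:bnd_v} combined with \cref{lem:zh_bounded_localtimestep} (giving $|a_{h,d}(\boldsymbol{w}_h,\boldsymbol{y}_h)|\le c\tnorm{\boldsymbol{w}_h}_v\tnorm{\boldsymbol{y}_h}_v\le c\tnorm{\boldsymbol{w}_h}_v\tnorm{\boldsymbol{w}_h}_s$), whereas you expand the four terms of $a_{h,d}$ and re-derive the facet scalings from the proof of \cref{lem:zh_bounded_localtimestep}. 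Your direct expansion works but is more laborious; the paper's use of \eqref{eq:bnd_v}$+$\cref{lem:zh_bounded_localtimestep} is cleaner and avoids repeating the $\mathcal{Q}_h^0$ case analysis. Also, for the volume cross term the paper splits into $\mathcal{T}_h^{dx}$ (using $\tau_\varepsilon\le c\varepsilon$) and $\mathcal{T}_h^c$ (using the inverse inequality to produce a $\norm[0]{w_h}_{\mathcal{K}}^2$ term), while your uniform use of $\tau_\varepsilon\le c\varepsilon$ is slightly simpler and equally valid.
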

\begin{proof}
  Let us first note that $\vartheta_h$ vanishes on $\mathcal{R}_h$ and
  $\partial\mathcal{T}_h^c$. Therefore, defining
  $\mathcal{Q}_h^{dx} := \partial\mathcal{T}_h^{dx}\cap\mathcal{Q}_h$,
  we find after some algebraic manipulation that:
  \begin{equation*}
    \begin{split}
      a_{h,c}(\boldsymbol{w}_h,\boldsymbol{y}_h)
      &
      =
      \del[0]{\nabla\cdot\del[0]{\beta w_h},y_h}_{\mathcal{T}_h}
      +
      \langle
      \del[0]{
        \beta_s
        -
        \tfrac{1}{2}
        \beta\cdot{n}
      }
      \sbr{\boldsymbol{w}_h}, y_h
      \rangle_{\partial\mathcal{T}_h}
      -
      \langle
      \tfrac{1}{2}
      \beta\cdot{n}
      \sbr{\boldsymbol{w}_h}, y_h
      \rangle_{\partial\mathcal{T}_h}
      \\
      &
      \quad-
      \langle
      \tfrac{1}{2}
      \del[0]{
        \envert[0]{\beta\cdot{n}}
        -
        \beta\cdot{n}
      }
      \varkappa_h,\sbr{\boldsymbol{y}_h}
      \rangle_{\partial\mathcal{E}_N\cap\mathcal{Q}_h^{dx}}
      +
      \langle
      \tfrac{1}{2}
      \del[0]{
        \envert[0]{\beta\cdot{n}}
        -
        \beta\cdot{n}
      }
      \varkappa_h,y_h
      \rangle_{\partial\mathcal{E}_N\cap\mathcal{Q}_h^{dx}}
      \\
      &
      \quad+
      \langle
      \beta_s
      \sbr{\boldsymbol{w}_h}, \sbr{\boldsymbol{y}_h}
      \rangle_{\mathcal{Q}_h^{dx}\setminus\mathcal{Q}_h^0}
      -
      \langle
      \beta_s
      \sbr{\boldsymbol{w}_h}, y_h
      \rangle_{\mathcal{Q}_h^{dx}\setminus\mathcal{Q}_h^0}.
    \end{split}
  \end{equation*}
  Furthermore, since
  $ \del[0]{\nabla\cdot\del[0]{\beta w_h},y_h}_{\mathcal{T}_h} =
  \del[0]{\partial_tw_h,\tau_\varepsilon\partial_tw_h}_{\mathcal{T}_h} +
  \del[0]{\overline{\nabla}\cdot\del[0]{\bar{\beta}w_h},\tau_\varepsilon\partial_tw_h}_{\mathcal{T}_h}$,
  we find that
  \begin{equation}
    \label{eq:ss_0_localtimestep}
    \begin{split}
      %&
      \sum_{\mathcal{K}\in\mathcal{T}_h}
      \tau_\varepsilon
      \norm[0]{\partial_tw_h}_\mathcal{K}^2
      %\\
      =
      &
      a_h(\boldsymbol{w}_h,\boldsymbol{y}_h)
      -
      a_{h,d}(\boldsymbol{w}_h,\boldsymbol{y}_h)
      -
      \del[0]{\overline{\nabla}\cdot\del[0]{\bar{\beta}w_h},\tau_\varepsilon\partial_tw_h}_{\mathcal{T}_h}
      \\
      &
      -
      \langle (\beta_s - \tfrac{1}{2}\beta\cdot n)
		\sbr{\boldsymbol{w}_h}, \tau_{\varepsilon}\partial_t w_h \rangle_{\partial\mathcal{T}_h}
      +
      \langle \tfrac{1}{2}\beta \cdot n \sbr{\boldsymbol{w}_h},
		\tau_{\varepsilon}\partial_t w_h \rangle_{\partial\mathcal{T}_h}
      \\
      &
      - \langle \beta_s \sbr{\boldsymbol{w}_h},
		\tau_{\varepsilon}\sbr{\partial_t \boldsymbol{w}_h}
		\rangle_{\mathcal{Q}_{h}^{dx}\backslash\mathcal{Q}_{h}^0}
      + \langle \beta_s \sbr{\boldsymbol{w}_h},
		\tau_{\varepsilon}\partial_t w_h
		\rangle_{\mathcal{Q}_{h}^{dx}\backslash\mathcal{Q}_{h}^0}
      \\
      &
      +
      \langle \tfrac{1}{2}(|\beta\cdot n|-\beta \cdot
		n)\varkappa_h, \tau_{\varepsilon}\sbr{\partial_t\boldsymbol{w}_h} \rangle_{\partial\mathcal{E}_N \cap \mathcal{Q}_h^{dx}}
      -
      \langle \tfrac{1}{2}(|\beta\cdot n|-\beta \cdot
		n)\varkappa_h, \tau_{\varepsilon} \partial_t w_h \rangle_{\partial\mathcal{E}_N \cap \mathcal{Q}_h^{dx}}
      \\
      =
      &
      a_h(\boldsymbol{w}_h,\boldsymbol{y}_h)
      -
      a_{h,d}(\boldsymbol{w}_h,\boldsymbol{y}_h)
      -
      \del[0]{\overline{\nabla}\cdot\del[0]{\bar{\beta}w_h},\tau_\varepsilon\partial_tw_h}_{\mathcal{T}_h}
      +T_1
      +T_2
      +T_3
      +T_4
      +T_5
      +T_6.
    \end{split}
  \end{equation}
  We will bound the last eight terms on the right hand side of the
  above equation. First, by \cref{eq:bnd_v},
  \cref{lem:zh_bounded_localtimestep},
  and Young's inequality, we have
  \begin{equation}
    \label{eq:ss_1_localtimestep}
    \begin{split}
      a_{h,d}(\boldsymbol{w}_h,\boldsymbol{y}_h)
      \leq
      &
      c
      \tnorm{\boldsymbol{w}_h}_v
      \tnorm{\boldsymbol{w}_h}_s
      \leq
      c
      \del[0]{
        \del[0]{
          \sum_{\mathcal{K}\in\mathcal{T}_h}
          \tau_\varepsilon
          \norm[0]{\partial_tw_h}_\mathcal{K}^2
        }^{1/2}
        +
        \tnorm{\boldsymbol{w}_h}_v
      }
      \tnorm{\boldsymbol{w}_h}_v
      \\
      \leq
      &
      c\del[0]{1+\delta^{-1}}
      \tnorm{\boldsymbol{w}_h}_v^2
      +
      \tfrac{1}{2}\delta
      \sum_{\mathcal{K}\in\mathcal{T}_h}
      \tau_\varepsilon
      \norm[0]{\partial_tw_h}_\mathcal{K}^2.
    \end{split}
  \end{equation}
  Next, using the Cauchy--Schwarz and Young's inequalities,
  ${\delta t_{\mathcal{K}}} \leq \varepsilon$ for
  $\mathcal{K}\in\mathcal{T}_h^{dx}$, \cref{eq:eg_inv_2} and
  \cref{eq:setofusefulineq}:
  \begin{equation}
    \label{eq:ss_2_localtimestep}
    \begin{split}
      (\overline{\nabla}\cdot\del[0]{\bar{\beta}w_h},
		\tau_\varepsilon\partial_tw_h)_{\mathcal{T}_h}
      \leq
      &
      \frac{\delta}{2}
      \sum_{\mathcal{K}\in\mathcal{T}_h}
      \tau_\varepsilon
      \norm[0]{\partial_tw_h}_\mathcal{K}^2
      +
      c\delta^{-1}
      \sum_{\mathcal{K}\in\mathcal{T}_h}
      \tau_\varepsilon
      \norm[0]{\overline{\nabla}w_h}_\mathcal{K}^2
      \\
      \leq
      &
      \frac{\delta}{2}
      \sum_{\mathcal{K}\in\mathcal{T}_h}
      \tau_\varepsilon
      \norm[0]{\partial_tw_h}_\mathcal{K}^2
      +
      c\delta^{-1}
      \sum_{\mathcal{K}\in\mathcal{T}_h}
      \varepsilon
      \norm[0]{\overline{\nabla}w_h}_{\mathcal{K}}^2
      +
      c\delta^{-1}
      \sum_{\mathcal{K}\in\mathcal{T}_h}
      \norm[0]{w_h}_\mathcal{K}^2.
    \end{split}
  \end{equation}
  $T_1$ and $T_2$ can be bounded using the Cauchy--Schwarz inequality,
  $\tfrac{1}{2}\envert[0]{\beta\cdot{n}} \leq \envert[0]{ \beta_s -
    \tfrac{1}{2}\beta\cdot{n} }$ for all $F\in\partial\mathcal{T}_h$,
  \cref{eq:eg_inv_4}, \cref{eq:setofusefulineq}, and Young's
  inequality:
  \begin{equation}
    \label{eq:ss_3_localtimestep}
    T_1+T_2
    \leq
    \tfrac{1}{2}\delta
    \sum_{\mathcal{K}\in\mathcal{T}_h}
    \tau_\varepsilon
    \norm[0]{\partial_tw_h}_\mathcal{K}^2
    +
    c\delta^{-1}
    \sum_{\mathcal{K}\in\mathcal{T}_h}
    \norm[0]{
      \envert[0]{
        \beta_s
        -
        \tfrac{1}{2}
        \beta\cdot{n}
      }^{1/2}
      \sbr{\boldsymbol{w}_h}
    }_{\partial\mathcal{K}}^2.
  \end{equation}
  Similarly $T_3$ and $T_4$ are bounded using the Cauchy--Schwarz
  inequality, \cref{lem:eg_inv}, \cref{eq:eg_inv_3},
  \cref{eq:betasinfmax}, ${\delta t_{\mathcal{K}}}\leq
  h_K\leq\varepsilon$ for
  $\mathcal{K}\in\mathcal{T}_h^d$, \cref{eq:setofusefulineq}, and
  Young's inequality. Note that we also make use of
  $\tilde{\varepsilon}\leq \varepsilon^{1/2}h_K^{-1/2}$ on $\mathcal{T}_h^{dx}$ since
  on $\mathcal{T}_h^d$, $\tilde{\varepsilon}=1$ and
  $h_K\le\varepsilon$ while on
  $\mathcal{T}_h^x$, $\tilde{\varepsilon}=\varepsilon^{1/2}$ and $h_K\le 1$:
  \begin{equation}
    \label{eq:ss_4_localtimestep}
    \begin{split}
      T_3+T_4
      \leq
      &
      c
      \sum_{\mathcal{K}\in\mathcal{T}_h^{dx}}
      \tau_\varepsilon
      \norm[0]{
        \envert[0]{
          \beta_s
          -
          \tfrac{1}{2}
          \beta\cdot{n}
        }^{1/2}
        \sbr{\boldsymbol{w}_h}
      }_{\mathcal{Q}_\mathcal{K}\setminus\mathcal{Q}_\mathcal{K}^0}
      \del[0]{
        \del[0]{\delta t_{\mathcal{K}}^{-1}+h_K^{-1}}
        \norm[0]{
          \sbr{\boldsymbol{w}_h}
        }_{\mathcal{Q}_\mathcal{K}\setminus\mathcal{Q}_\mathcal{K}^0}
        +
        h_K^{-1/2}
        \norm[0]{
          \partial_t{w}_h
        }_{\mathcal{K}}
      }
      \\
      \leq
      &
      c
      \sum_{\mathcal{K}\in\mathcal{T}_h^{dx}}
      \norm[0]{
        \envert[0]{
          \beta_s
          -
          \tfrac{1}{2}
          \beta\cdot{n}
        }^{1/2}
        \sbr{\boldsymbol{w}_h}
      }_{\mathcal{Q}_\mathcal{K}\setminus\mathcal{Q}_\mathcal{K}^0}
      \del[0]{
        \tilde{\varepsilon}
        \norm[0]{
          \sbr{\boldsymbol{w}_h}
        }_{\mathcal{Q}_\mathcal{K}\setminus\mathcal{Q}_\mathcal{K}^0}
        +
        \Delta t_{\mathcal{K}}^{1/2}
        \tilde{\varepsilon}
        \norm[0]{
          \partial_t{w}_h
        }_{\mathcal{K}}
      }
      \\
      \leq
      &
      c
      \sum_{\mathcal{K}\in\mathcal{T}_h^{dx}}
      \norm[0]{
        \envert[0]{
          \beta_s
          -
          \tfrac{1}{2}
          \beta\cdot{n}
        }^{1/2}
        \sbr{\boldsymbol{w}_h}
      }_{\mathcal{Q}_\mathcal{K}\setminus\mathcal{Q}_\mathcal{K}^0}
      \del[0]{
        \varepsilon^{1/2}h_K^{-1/2}
        \norm[0]{
          \sbr{\boldsymbol{w}_h}
        }_{\mathcal{Q}_\mathcal{K}\setminus\mathcal{Q}_\mathcal{K}^0}
        +
        \tau_\varepsilon^{1/2}
        \norm[0]{
          \partial_t{w}_h
        }_{\mathcal{K}}
      }
      \\
      \leq
      &
      \tfrac{1}{2}\delta
      \sum_{\mathcal{K}\in\mathcal{T}_h}
      \tau_\varepsilon
      \norm[0]{\partial_tw_h}_\mathcal{K}^2
      +
      c
      \sum_{\mathcal{K}\in\mathcal{T}_h}
      \varepsilon h_K^{-1}
      \norm[0]{\sbr[0]{\boldsymbol{w}_h}}_{\mathcal{Q}_{\mathcal{K}}}^2
      +
      c
      \del[0]{1+\delta^{-1}}
      \sum_{\mathcal{K}\in\mathcal{T}_h}
      \norm[0]{
        \envert[0]{
          \beta_s
          -
          \tfrac{1}{2}
          \beta\cdot{n}
        }^{1/2}
        \sbr{\boldsymbol{w}_h}
      }_{\partial\mathcal{K}}^2.
    \end{split}
  \end{equation}
  Similarly, to bound $T_5$ and $T_6$, we use the Cauchy--Schwarz
  inequality, \cref{lem:eg_inv}, \cref{eq:eg_inv_3}, that
  ${\delta t_{\mathcal{K}}}\leq h_K\leq\varepsilon$ for
  $\mathcal{K}\in\mathcal{T}_h^d$, \cref{eq:setofusefulineq},
  $\tilde{\varepsilon}\leq \varepsilon^{1/2}h_K^{-1/2}$ on $\mathcal{T}_h^{dx}$ and
  Young's inequality:
  \begin{equation}
    \label{eq:ss_5_localtimestep}
    T_5+T_6
    \leq
    \tfrac{1}{2}\delta
    \sum_{\mathcal{K}\in\mathcal{T}_h}
    \tau_\varepsilon
    \norm[0]{\partial_tw_h}_\mathcal{K}^2
    +
    c
    \sum_{\mathcal{K}\in\mathcal{T}_h}
    \varepsilon h_K^{-1}
    \norm[0]{\sbr[0]{\boldsymbol{w}_h}}_{\mathcal{Q}_{\mathcal{K}}}^2
    +
    c
    \del[0]{1 + \delta^{-1}}
    \sum_{\mathcal{K}\in\mathcal{T}_h}
    \norm[0]{
      \envert[0]{\tfrac{1}{2}\beta\cdot{n}}^{1/2}
      \varkappa_h
    }_{\mathcal{Q}_\mathcal{K}\cap\partial\mathcal{E}_N}^2.
  \end{equation}
  Combining
  \cref{eq:ss_0_localtimestep,eq:ss_1_localtimestep,eq:ss_2_localtimestep,eq:ss_3_localtimestep,eq:ss_4_localtimestep,eq:ss_5_localtimestep}
  and choosing $\delta=1/5$ we obtain
  \begin{equation}
    \label{eq:ss_6_localtimestep}
    \sum_{\mathcal{K}\in\mathcal{T}_h}
    \tau_\varepsilon
    \norm[0]{\partial_tw_h}_\mathcal{K}^2
    \leq
    a_h(\boldsymbol{w}_h,2\boldsymbol{y}_h)
    +
    c
    \tnorm{\boldsymbol{w}_h}_v^2.
  \end{equation}
  Adding $\tnorm{\boldsymbol{w}_h}_v^2$ to both sides of
  \cref{eq:ss_6_localtimestep}, the first bound in \cref{eq:sv_0}
  yields the result.
\end{proof}

We end this section by proving \cref{eq:new_inf_sup_s_norm}.

\begin{proof}[Proof of \cref{eq:new_inf_sup_s_norm}]
  By \cref{eq:proj_time_derivative} and using that
  $\tau_{\varepsilon} \le c\varepsilon$, because on $\mathcal{T}_h^{dx}$,
  $\tau_\varepsilon\leq \Delta t\leq c \delta t\leq c \varepsilon$ and on
  $\mathcal{T}_h^{c}$,
  $\tau_{\varepsilon}=\Delta t_{\mathcal{K}}\varepsilon\le
  \varepsilon (\le c \varepsilon)$, we find
  \begin{equation*}
    \begin{split}
      \sum_{\mathcal{K}\in\mathcal{T}_h}
      \tau_\varepsilon
      \norm[0]{\partial_t\del{\Pi_h\del{\varphi w_h}}}_{\mathcal{K}}^2
      \leq
      &
      c
      \del[2]{
        \sum_{\mathcal{K}\in\mathcal{T}_h}
        \tau_\varepsilon
        \norm[0]{\partial_t\del{\varphi w_h}}_{\mathcal{K}}^2
        +
        \sum_{\mathcal{K}\in\mathcal{T}_h}
        \tau_\varepsilon
        \norm[0]{\overline{\nabla}\del{\varphi w_h}}_{\mathcal{K}}^2
      }
      \\
      \leq
      &
      c
      \del[3]{
        c_T^2
        \del[2]{
          \sum_{\mathcal{K}\in\mathcal{T}_h}
          \tau_\varepsilon
          \norm[0]{\partial_t{w_h}}_{\mathcal{K}}^2
          +
          c
          \sum_{\mathcal{K}\in\mathcal{T}_h}
          \varepsilon
          \norm[0]{\overline{\nabla}{w_h}}_{\mathcal{K}}^2
        }
        +
        \sum_{\mathcal{K}\in\mathcal{T}_h}
        \norm[0]{{w_h}}_{\mathcal{K}}^2
      }
      \\
      \leq
      &
      c_T^2
      \tnorm{\boldsymbol{w}_h}_s^2.
    \end{split}
  \end{equation*}
  Therefore, using \cref{lem:stab_fortin}, we conclude that
  \begin{equation}
    \label{eq:stab_fortin_s_localtimestep}
    \tnorm{\boldsymbol{\Pi}_h\del{\varphi\boldsymbol{w}_h}}_s
    \leq
    c_T
    \tnorm{\boldsymbol{w}_h}_s.
  \end{equation}
  \Cref{eq:new_inf_sup_s_norm} can now be shown to hold after
  combining \cref{eq:stab_fortin_s_localtimestep} with
  \cref{lem:zh_bounded_localtimestep,lem:s_inf_sup_bounded_below_localtimestep}.
\end{proof}

%------------------------------------------------------------------------------
\subsection{The inf-sup condition with respect to
  $|\mkern-1.5mu|\mkern-1.5mu|\cdot|\mkern-1.5mu|\mkern-1.5mu|_{ss}$}
\label{ss:infsupsupgnorm}

\begin{proof}[Proof of \cref{thm:supginfsup}]
  We construct the test function
  $\boldsymbol{\kappa}_h:=\del[0]{\kappa_h,\varsigma_h}$ such that for
  $\mathcal{K}\in\mathcal{T}_h$,
  $\kappa_h|_{\mathcal{K}}:= \tfrac{\delta
    t_{\mathcal{K}}h_K^2}{\delta t_{\mathcal{K}}+h_K}
  \Pi_h\del[0]{\beta\cdot\nabla w_h }$ while $\varsigma_h$ vanishes on
  all faces of $\mathcal{F}_h$. We first show that there exists a
  positive constant $c_1$, independent of $h_K$,
  $\delta t_{\mathcal{K}}$, $\varepsilon$, and $T$ such that the following
  holds:
  \begin{equation}
    \label{eq:supginfsup1}
    \tnorm{\boldsymbol{\kappa}_h}_{s}
    \leq
    c_1
    \norm{w_h}_{sd}.
  \end{equation}
  We bound each term of $\tnorm{\cdot}_s$, starting with the volume
  terms. Noting that
  $\tfrac{\delta t_{\mathcal{K}}h_K^2}{\delta t_{\mathcal{K}}+h_K} \le
  1$ and using the definition of $\norm{\cdot}_{sd}$ in
  \cref{eq:sd_norm}, we have:
  \begin{equation}
    \label{eq:kappahKwhsd-1}
    \sum_{\mathcal{K}\in\mathcal{T}_h}
    \norm{\kappa_h}_{\mathcal{K}}^2
    \leq
    \norm{w_h}_{sd}^2.
  \end{equation}
  The diffusive volume term is bounded using $\varepsilon\leq 1$,
  $\tfrac{\delta t_{\mathcal{K}}h_K^2}{\delta
    t_{\mathcal{K}}+h_K}h_K^{-2}\leq 1$ and \cref{eq:eg_inv_2}:
  \begin{equation}
    \label{eq:kappahKwhsd-2}
    \sum_{\mathcal{K}\in\mathcal{T}_h}
    \varepsilon\norm[0]{\overline{\nabla}\kappa_h}_{\mathcal{K}}^2
    \leq
    c
    \sum_{\mathcal{K}\in\mathcal{T}_h}
    \del{\tfrac{\delta t_{\mathcal{K}}h_K^2}{\delta
        t_{\mathcal{K}}+h_K}}^2
    h_K^{-2}
    \norm{{\Pi_h\del[0]{\beta\cdot\nabla
          w_h}}}_{\mathcal{K}}^2
    \leq
    c
    \norm{w_h}_{sd}^2.
  \end{equation}
  For the time derivative volume term, we need
  \cref{eq:setofusefulineq} and \cref{eq:eg_inv_1}:
  \begin{equation}
    \label{eq:kappahKwhsd-3}
    \sum_{\mathcal{K}\in\mathcal{T}_h}
    \tau_\varepsilon
    \norm{\partial_t\kappa_h}_{\mathcal{K}}^2
    \leq
    c
    \sum_{\mathcal{K}\in\mathcal{T}_h}
    \delta t_{\mathcal{K}}
    \del[0]{\delta t_{\mathcal{K}}^{-1}+h_K^{-1}}^2
    \del{\tfrac{\delta t_{\mathcal{K}}h_K^2}{\delta
        t_{\mathcal{K}}+h_K}}^2
    \norm{\Pi_h\del[0]{\beta\cdot\nabla w_h}}_{\mathcal{K}}^2
    \leq
    c
    \norm{w_h}_{sd}^2.
  \end{equation}
  Next we turn to the facet terms. To bound the diffusive facet term,
  we apply \cref{eq:eg_inv_3}:
  \begin{equation}
    \label{eq:kappahKwhsd-4}
    \begin{split}
      \sum_{\mathcal{K}\in\mathcal{T}_h}
      \varepsilon h_K^{-1}
      \norm[0]{\sbr{\boldsymbol{\kappa}_h}}_{\mathcal{Q}_{\mathcal{K}}}^2
      &
      =
      \sum_{\mathcal{K}\in\mathcal{T}_h}
      \varepsilon h_K^{-1}
      \del{\tfrac{\delta t_{\mathcal{K}}h_K^2}{\delta
          t_{\mathcal{K}}+h_K}}^2
      \norm[0]{\Pi_h\del{\beta\cdot\nabla w_h}}_{\mathcal{Q}_{\mathcal{K}}}^2
      \\
      &\leq
      c
      \sum_{\mathcal{K}\in\mathcal{T}_h}
      {\tfrac{\delta t_{\mathcal{K}}h_K^2}{\delta
          t_{\mathcal{K}}+h_K}}
      \norm[0]{\Pi_h\del{\beta\cdot\nabla w_h}}_{\mathcal{K}}^2
      \le
      c
      \norm[0]{w_h}_{sd}^2.
    \end{split}
  \end{equation}
  We use \cref{eq:eg_inv_4} and that
  $\del[0]{\delta t_{\mathcal{K}}^{-1/2}+h_K^{-1/2}}^2\tfrac{\delta
    t_{\mathcal{K}}h_K}{\delta t_{\mathcal{K}}+h_K} \le 2$ to bound
  the advective facet term:
  \begin{equation}
    \label{eq:kappahKwhsd-5}
    \begin{split}
      \sum_{\mathcal{K}\in\mathcal{T}_h}
      \norm[0]{
        \envert[0]{\beta_s-\tfrac{1}{2}\beta\cdot n}^{1/2}
        \sbr[0]{\boldsymbol{\kappa}_h}
      }_{\partial\mathcal{K}}^2
      &\leq
      c
      \sum_{\mathcal{K}\in\mathcal{T}_h}
      \del{\tfrac{\delta t_{\mathcal{K}}h_K^2}{\delta
          t_{\mathcal{K}}+h_K}}^2
      \norm[0]{
        \Pi_h\del[0]{\beta\cdot\nabla w_h}
      }_{\partial\mathcal{K}}^2
      \\
      &\leq
      c
      \sum_{\mathcal{K}\in\mathcal{T}_h}
      {\tfrac{\delta t_{\mathcal{K}}h_K^2}{\delta t_{\mathcal{K}}+h_K}}
      \norm[0]{
        \Pi_h\del[0]{\beta\cdot\nabla w_h}
      }_{\mathcal{K}}^2
      \le
      c
      \norm{w_h}_{sd}^2.
    \end{split}
  \end{equation}
  The Neumann boundary term vanishes since $\varsigma_h\equiv 0$. We
  can therefore conclude \cref{eq:supginfsup1} from
  \cref{eq:kappahKwhsd-1,eq:kappahKwhsd-2,eq:kappahKwhsd-3,eq:kappahKwhsd-4,eq:kappahKwhsd-5}.

  We next show that there exists a positive constant $c_2$,
  independent of $h_K$, $\delta t_{\mathcal{K}}$, $\varepsilon$, and $T$ such
  that
  \begin{equation}
    \label{eq:supginfsup2}
    \norm{w_h}_{sd}^2-
    c_2
    \tnorm{\boldsymbol{w}_h}_s
    \norm{w_h}_{sd}
    \leq
    a_{h}(\boldsymbol{w}_h,\boldsymbol{\kappa}_h).
  \end{equation}
  We first write the advective part of the bilinear form as:
  \begin{equation}
    \label{eq:supginfsup2_1}
    a_{h,c}(\boldsymbol{w}_h,\boldsymbol{\kappa}_h)
    =
    \del{\nabla\cdot\del{\beta w_h},\kappa_h}_{\mathcal{T}_h}
    +
    \langle
    \del{\beta_s-\beta\cdot n}
    \sbr{\boldsymbol{w}_h}
    ,\kappa_h
    \rangle_{\partial\mathcal{T}_h}
    =: T_1 + T_2.
  \end{equation}
  We bound $T_1$ using the definition of the projection operator
  $\Pi_h$:
  \begin{equation}
    \label{eq:supginfsup2_2}
    \begin{split}
      T_1
      &=
      \del[1]{
        \del{I-\Pi_h}\del[0]{\beta\cdot\nabla w_h}
        ,
        \tfrac{\delta t_{\mathcal{K}}h_K^2}{\delta t_{\mathcal{K}}+h_K}
        \Pi_h\del{\beta\cdot\nabla w_h}
      }_{\mathcal{T}_h}
      +
      \del[1]{
        \Pi_h\del[0]{\beta\cdot\nabla w_h}
        ,
        \tfrac{\delta t_{\mathcal{K}}h_K^2}{\delta t_{\mathcal{K}}+h_K}
        \Pi_h\del{\beta\cdot\nabla w_h}
      }_{\mathcal{T}_h}
      \\
      &=
      \norm{w_h}_{sd}^2.
    \end{split}
  \end{equation}
  For $T_2$, we note that
  $\envert[0]{\beta_s-\beta\cdot n}\leq
  2\envert[0]{\beta_s-\tfrac{1}{2}\beta\cdot n}$ for any
  $F\in\mathcal{T}_h$. Then, also using \cref{eq:eg_inv_4} and
  H\"older's inequality for sums,
  \begin{equation}
    \label{eq:supginfsup2_3}
    T_2
    \leq
    c
    \sum_{\mathcal{K}\in\mathcal{T}_h}
    \norm[1]{
      \envert[1]{\beta_s-\tfrac{1}{2}\beta\cdot n}^{1/2}
      \sbr{\boldsymbol{w}_h}
    }_{\partial\mathcal{K}}
    \tfrac{\delta t_{\mathcal{K}}h_K^2}{\delta t_{\mathcal{K}}+h_K}
    \norm{
      \Pi_h\del{\beta\cdot\nabla w_h}
    }_{\partial\mathcal{K}}
    \leq
    c
    \tnorm{ \boldsymbol{w}_h }_{s}
    \norm{ w_h }_{sd}.
  \end{equation}
  For the diffusive part of the bilinear form, we write:
  \begin{equation}
    \label{eq:supginfsup2_4}
    \begin{split}
      a_{h,d}(\boldsymbol{w}_h,\boldsymbol{\kappa}_h)
      &=
      \del[0]{
        \varepsilon\overline{\nabla}w_h,
        \overline{\nabla}\kappa_h
      }_{\mathcal{T}_h}
      -
      \langle
      \varepsilon\sbr{\boldsymbol{w}_h},
      \overline{\nabla}_{\bar{n}}
      \kappa_h
      \rangle_{\mathcal{Q}_h}
      %\\
      %&
      -
      \langle
      \varepsilon{\kappa_h},
      \overline{\nabla}_{\bar{n}}
      w_h
      \rangle_{\mathcal{Q}_h}
      +
      \langle
      \alpha\varepsilon h_K^{-1}
      \sbr{\boldsymbol{w}_h}
      ,
      {\kappa_h}
      \rangle_{\mathcal{Q}_h}
      \\
      &=:
      I_1+I_2+I_3+I_4.
    \end{split}
  \end{equation}
  For $I_1$, we apply the Cauchy--Schwarz inequality,
  \cref{eq:eg_inv_2}, and H\"older's inequality for sums:
  \begin{equation}
    \label{eq:supginfsup2_5}
    \begin{split}
      I_1
      &\leq
      \sum_{\mathcal{K}\in\mathcal{T}_h}
      \varepsilon
      \norm[0]{\overline{\nabla}w_h}_{\mathcal{K}}
      \tfrac{\delta t_{\mathcal{K}}h_K^2}{\delta t_{\mathcal{K}}+h_K}
      \norm[0]{
        \overline{\nabla}
        \del[0]{
          \Pi_h\del[0]{\beta\cdot\nabla w_h}
        }
      }_{\mathcal{K}}
      \\
      &\leq
      c
      \sum_{\mathcal{K}\in\mathcal{T}_h}
      \varepsilon^{1/2}
      \norm[0]{\overline{\nabla}w_h}_{\mathcal{K}}
      \del[1]{\tfrac{\delta t_{\mathcal{K}}h_K^2}{\delta
          t_{\mathcal{K}}+h_K}}^{1/2}
      \norm[0]{
        \Pi_h\del[0]{\beta\cdot\nabla w_h}
      }_{\mathcal{K}}
      \leq
      c
      \tnorm{\boldsymbol{w}_h}_s\norm{w_h}_{sd}.
    \end{split}
  \end{equation}
  For $I_2$, we apply the Cauchy--Schwarz inequality,
  \cref{eq:eg_inv_2,eq:eg_inv_3}, and H\"older's inequality for
  sums:
  \begin{equation}
    \label{eq:supginfsup2_6}
    \begin{split}
      I_2
      &\leq
      \sum_{\mathcal{K}\in\mathcal{T}_h}
      \varepsilon\norm[0]{\sbr{\boldsymbol{w}_h}}_{\mathcal{Q}_{\mathcal{K}}}
      \tfrac{\delta t_{\mathcal{K}}h_K^2}{\delta t_{\mathcal{K}}+h_K}
      \norm[0]{
        \overline{\nabla}
        \del[0]{
          \Pi_h\del[0]{\beta\cdot\nabla w_h}
        }
      }_{\mathcal{Q}_{\mathcal{K}}}
      \\
      &\leq
      c
      \sum_{\mathcal{K}\in\mathcal{T}_h}
      \varepsilon^{1/2}h_K^{-1/2}\norm[0]{\sbr{\boldsymbol{w}_h}}_{\mathcal{Q}_{\mathcal{K}}}
      \del[1]{\tfrac{\delta t_{\mathcal{K}}h_K^2}{\delta
          t_{\mathcal{K}}+h_K}}^{1/2}
      \norm[0]{
        \Pi_h\del[0]{\beta\cdot\nabla w_h}
      }_{\mathcal{K}}
      \leq
      c
      \tnorm{\boldsymbol{w}_h}_s\norm{w_h}_{sd}.
    \end{split}
  \end{equation}
  Similarly for $I_3$ and $I_4$, we apply the Cauchy--Schwarz inequality,
  \cref{eq:eg_inv_3}, and H\"older's inequality for sums:
  \begin{equation}
    \label{eq:supginfsup2_8}
    I_3 + I_4 \le c
      \tnorm{\boldsymbol{w}_h}_s\norm{w_h}_{sd}.
  \end{equation}
  Combining
  \cref{eq:supginfsup2_1,eq:supginfsup2_2,eq:supginfsup2_3,eq:supginfsup2_4,eq:supginfsup2_5,eq:supginfsup2_6,eq:supginfsup2_8},
  we conclude \cref{eq:supginfsup2}.

  Combining \cref{eq:supginfsup2} and \cref{eq:supginfsup1} then
  yields:
  \begin{equation*}
    c_1^{-1}
    \del{
      \norm[0]{w_h}_{sd}
      -
      c_2
      \tnorm{\boldsymbol{w}_h}_{s}
    }
    \leq
    \frac{a_h({\boldsymbol{w}_h,\boldsymbol{\kappa}_h})}{
      c_1
      \norm{w_h}_{sd}
    }
    \leq
    \frac{a_h({\boldsymbol{w}_h,\boldsymbol{\kappa}_h})}{\tnorm{\boldsymbol{\kappa}_h}_s}
    \leq
    \sup_{\boldsymbol{v}_h\in \boldsymbol{V}_h}
    \frac{a_h({\boldsymbol{w}_h,\boldsymbol{v}_h})}{\tnorm{\boldsymbol{v}_h}_s}.
  \end{equation*}
  By combining the above with \cref{eq:new_inf_sup_s_norm},
  \begin{equation*}
    \del[1]{1+(c_{1}^{-1}c_{2}+1)c_T}
    \sup_{\boldsymbol{v}_h\in \boldsymbol{V}_h}
    \frac{a_h({\boldsymbol{w}_h,\boldsymbol{v}_h})}{\tnorm{\boldsymbol{v}_h}_s}
    \geq
    c_{1}^{-1}
    \norm{w_h}_{sd}
    +\tnorm{\boldsymbol{w}_h}_s
    \geq
    c
    \tnorm{\boldsymbol{w}_h}_{ss},
  \end{equation*}
  proving \cref{eq:supginfsup}.
\end{proof}

%------------------------------------------------------------------------------
\section{Error analysis}
\label{s:conv_anal}

The following projection estimates for $\Pi_h$ and
$\Pi_h^{\mathcal{F}}$ were shown to hold for any
$u|_{\mathcal{K}} \in H^{(p_t+1,p_s+1)}(\mathcal{K})$,
$\mathcal{K} \in \mathcal{T}_h$, see \citet[Lemma 5.2]{Kirk:2019},
\citet[Lemma 6.1 and Remark 6.2]{Sudirham:2006}, and \citet[Lemmas
3.13 and 3.17]{Georgoulis:thesis}:
\begin{subequations}
  \label{eq:proj_est_conv}
  \begin{align}
    \norm[0]{u-\Pi_hu}_\mathcal{K}^2
    &
      \leq
      c
      \del[1]{h_K^{2p_s+2}+{\delta t_{\mathcal{K}}}^{2p_t+2}}
      \norm[0]{u}_{
      {H^{(p_t+1,p_s+1)}(\mathcal{K})}
      }^2,
      \label{eq:proj_est_conv_1}
    \\
    \norm[0]{\overline{\nabla}\del[0]{u-\Pi_hu}}_\mathcal{K}^2
    &
      \leq
      c
      \del[1]{h_K^{2p_s}+{\delta t_{\mathcal{K}}}^{2p_t+2}}
      \norm[0]{u}_{
      {H^{(p_t+1,p_s+1)}(\mathcal{K})}
      }^2,
      \label{eq:proj_est_conv_2}
    \\
    \norm[0]{\partial_t\del[0]{u-\Pi_hu}}_\mathcal{K}^2
    &
      \leq
      c
      \del[1]{h_K^{2p_s}+{\delta t_{\mathcal{K}}}^{2p_t}}
      \norm[0]{u}_{
      {H^{(p_t+1,p_s+1)}(\mathcal{K})}
      }^2,
      \label{eq:proj_est_conv_3}
    \\
    \norm[0]{
    \overline{\nabla}_{\bar{{n}}}\del[0]{u-\Pi_hu}
    }_{\mathcal{Q}_\mathcal{K}}^2
    &
      \leq
      c
      \del[1]{h_K^{2p_s-1}+h_K^{-1}{\delta t_{\mathcal{K}}}^{2p_t+2}}
      \norm[0]{u}_{
      {H^{(p_t+1,p_s+1)}(\mathcal{K})}
      }^2,
      \label{eq:proj_est_conv_4}
    \\
    \norm[0]{u-\Pi_hu}_{\partial\mathcal{K}}^2
    &
      \leq
      c
      \del[1]{h_K^{2p_s+1}+{\delta t_{\mathcal{K}}}^{2p_t+1}}
      \norm[0]{u}_{
      {H^{(p_t+1,p_s+1)}(\mathcal{K})}
      }^2,
      \label{eq:proj_est_conv_5}
    \\
    \norm[0]{u-\Pi^{\mathcal{F}}_hu}_{\partial\mathcal{K}}^2
    &
      \leq
      c
      \del[1]{h_K^{2p_s+1}+{\delta t_{\mathcal{K}}}^{2p_t+1}}
      \norm[0]{u}_{
      {H^{(p_t+1,p_s+1)}(\mathcal{K})}
      }^2.
      \label{eq:proj_est_conv_6}
  \end{align}
\end{subequations}
Let us define $h := \max_{\mathcal{K}\in\mathcal{T}_h}h_K$ and
$\delta t := \max_{\mathcal{K}\in\mathcal{T}_h}\delta
t_{\mathcal{K}}$. An immediate consequence of \cref{eq:proj_est_conv}
is the following estimate.

\begin{lemma}
  \label{lem:uminPihusbound}
  Let $u$, with $u|_{\mathcal{K}} \in H^{(p_t+1,p_s+1)}(\mathcal{K})$
  for all $\mathcal{K} \in \mathcal{T}_h$, and define
  $\boldsymbol{u} := (u, u|_{\Gamma})$. Let
  $\boldsymbol{\Pi}_h\boldsymbol{u} = (\Pi_hu,\Pi_h^{\mathcal{F}}u)$.
  Then,
  \begin{equation*}
    \tnorm{\boldsymbol{u}-\boldsymbol{\Pi}_h\boldsymbol{u}}_{ss}^2
    \le
    c\sbr{ h^{2p_s}(h + \varepsilon + \tilde{\varepsilon} \delta t)
      + \delta t^{2p_t}
      (\delta t + \varepsilon h^{-1}\delta t)},
  \end{equation*}
  where the constant $c$ depends on
  $\sum_{\mathcal{K}\in\mathcal{T}_h}\norm[0]{u}_{H^{(p_t+1,p_s+1)}(\mathcal{K})}$.
\end{lemma}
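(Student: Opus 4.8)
The plan is to expand $\tnorm{\boldsymbol{u}-\boldsymbol{\Pi}_h\boldsymbol{u}}_{ss}^2$ according to \cref{eq:norm-defs} into its volume, facet, and Neumann boundary contributions, and to estimate each of them directly with the projection bounds \cref{eq:proj_est_conv}. Write $\boldsymbol{v}:=\boldsymbol{u}-\boldsymbol{\Pi}_h\boldsymbol{u} = \del[0]{u-\Pi_hu,\,u|_\Gamma - \Pi_h^{\mathcal{F}}u}$, so that on a face $\sbr{\boldsymbol{v}} = \del[0]{u-\Pi_hu} - \del[0]{u-\Pi_h^{\mathcal{F}}u}$. Throughout I will use that $\norm[0]{\bar{\beta}}_{L^\infty(\mathcal{E})}\le 1$ gives $\envert[0]{\beta\cdot n}\le 2$, hence the facet weights $\envert[0]{\tfrac12\beta\cdot n}$ and $\envert[0]{\beta_s - \tfrac12\beta\cdot n}$ are bounded by an absolute constant; that $\mathcal{Q}_{\mathcal{K}}\subseteq\partial\mathcal{K}$ and each $F\in\partial\mathcal{E}_N$ is contained in some $\partial\mathcal{K}$; that $\tau_\varepsilon = \Delta t_{\mathcal{K}}\tilde{\varepsilon}\le c\,\delta t_{\mathcal{K}}\tilde{\varepsilon}$ with $\tilde{\varepsilon}\le 1$; and that $\delta t_{\mathcal{K}}\le h_K$ together with the boundedness of $\mathcal{E}$, so that $\delta t_{\mathcal{K}},h_K\le c$ and $\varepsilon\le1$.

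For the volume terms I would estimate $\sum_{\mathcal{K}}\norm[0]{u-\Pi_hu}_{\mathcal{K}}^2$ by \cref{eq:proj_est_conv_1}, $\sum_{\mathcal{K}}\varepsilon\norm[0]{\overline{\nabla}\del[0]{u-\Pi_hu}}_{\mathcal{K}}^2$ by \cref{eq:proj_est_conv_2}, and $\sum_{\mathcal{K}}\tau_\varepsilon\norm[0]{\partial_t\del[0]{u-\Pi_hu}}_{\mathcal{K}}^2$ by \cref{eq:proj_est_conv_3} after inserting $\tau_\varepsilon\le c\,\delta t_{\mathcal{K}}\tilde{\varepsilon}$. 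For the streamline-diffusion term $\norm{u-\Pi_hu}_{sd}^2$ I would discard $\Pi_h$ by its $L^2$-stability, use $\norm[0]{\beta\cdot\nabla\del[0]{u-\Pi_hu}}_{\mathcal{K}}\le\norm[0]{\partial_t\del[0]{u-\Pi_hu}}_{\mathcal{K}} + \norm[0]{\overline{\nabla}\del[0]{u-\Pi_hu}}_{\mathcal{K}}$ (since $\envert[0]{\bar{\beta}}\le1$), apply \cref{eq:proj_est_conv_2,eq:proj_est_conv_3} once more, and absorb the weight using $\tfrac{\delta t_{\mathcal{K}}h_K^2}{\delta t_{\mathcal{K}}+h_K}\le\min\del[0]{h_K^2,\,\delta t_{\mathcal{K}}h_K}$.

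For the facet and boundary terms: after bounding the weights by a constant, the advective facet term and the Neumann term reduce to $\sum_{\mathcal{K}}\norm[0]{\sbr{\boldsymbol{v}}}_{\partial\mathcal{K}}^2$ and $\sum_{F\in\partial\mathcal{E}_N}\norm[0]{u-\Pi_h^{\mathcal{F}}u}_F^2$, each controlled by the triangle inequality and \cref{eq:proj_est_conv_5,eq:proj_est_conv_6}. The diffusive facet term $\sum_{\mathcal{K}}\varepsilon h_K^{-1}\norm[0]{\sbr{\boldsymbol{v}}}_{\mathcal{Q}_{\mathcal{K}}}^2$ is treated identically, carrying the extra factor $\varepsilon h_K^{-1}$, which converts $h_K^{2p_s+1}$ into $\varepsilon h_K^{2p_s}$ and $\delta t_{\mathcal{K}}^{2p_t+1}$ into $\delta t_{\mathcal{K}}^{2p_t}\,\varepsilon h_K^{-1}\delta t_{\mathcal{K}}$. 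Summing all the contributions and repeatedly using $\delta t_{\mathcal{K}}\le h_K\le c$, $\tilde{\varepsilon}\le1$, $\varepsilon\le1$ to subsume each resulting monomial in $h_K$ and $\delta t_{\mathcal{K}}$ under one of $h^{2p_s}h$, $h^{2p_s}\varepsilon$, $h^{2p_s}\tilde{\varepsilon}\delta t$, $\delta t^{2p_t}\delta t$, or $\delta t^{2p_t}\varepsilon h^{-1}\delta t$ gives the asserted bound, with the constant absorbing $\sum_{\mathcal{K}}\norm[0]{u}_{H^{(p_t+1,p_s+1)}(\mathcal{K})}$ and fixed powers of the mesh sizes.

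I do not expect any step to be genuinely difficult; the one place that needs care is the bookkeeping across the three classes $\mathcal{T}_h^d,\mathcal{T}_h^x,\mathcal{T}_h^c$ entering $\tau_\varepsilon$ and $\tilde{\varepsilon}$, and the verification that every power of $h_K$, $\delta t_{\mathcal{K}}$ produced by \cref{eq:proj_est_conv} is majorized, term by term, by the right-hand side, which is exactly where the inequalities $\delta t_{\mathcal{K}}\le h_K$, $\tilde{\varepsilon}\le1$ and $\varepsilon\le1$ come in.
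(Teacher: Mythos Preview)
Your plan is correct and mirrors the paper's proof almost exactly: the paper likewise expands $\tnorm{\boldsymbol{u}-\boldsymbol{\Pi}_h\boldsymbol{u}}_{ss}^2$ term by term, bounds the volume contributions via \cref{eq:proj_est_conv_1,eq:proj_est_conv_2,eq:proj_est_conv_3}, the facet and Neumann terms by the triangle inequality together with \cref{eq:proj_est_conv_5,eq:proj_est_conv_6}, the streamline term via $\tfrac{\delta t_{\mathcal{K}}h_K^2}{\delta t_{\mathcal{K}}+h_K}\le \delta t_{\mathcal{K}}h_K$ and \cref{eq:proj_est_conv_2,eq:proj_est_conv_3}, and the $\tau_\varepsilon$ term by splitting over $\mathcal{T}_h^d,\mathcal{T}_h^x,\mathcal{T}_h^c$ exactly as you anticipate. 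The only cosmetic difference is that the paper writes out the three $\tau_\varepsilon$ cases explicitly before collapsing them into the single factor $\tilde{\varepsilon}$, whereas you invoke $\tau_\varepsilon\le c\,\delta t_{\mathcal{K}}\tilde{\varepsilon}$ directly.
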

\begin{proof}
  By \cref{eq:proj_est_conv_1},
  \begin{equation}
    \label{eq:conv_anal_2}
    \norm[0]{
      u-\Pi_hu
    }_\mathcal{K}^2
    \leq
    c
    \del[1]{
      h_K^{2p_s+2}
      +{\delta t_{\mathcal{K}}}^{2p_t+2}
    }
    \norm[0]{u}_{H^{(p_t+1,p_s+1)}(\mathcal{K})}^2.
  \end{equation}
  Next, \cref{eq:proj_est_conv_2} gives us:
  \begin{equation}
    \label{eq:conv_anal_4}
    \varepsilon
    \norm[0]{
      \overline{\nabla}\del{u-\Pi_hu}
    }_\mathcal{K}^2
    \leq
    c
    \varepsilon
    \del[1]{h_K^{2p_s}+{\delta t_{\mathcal{K}}}^{2p_t+2}}
    \norm[0]{u}_{H^{(p_t+1,p_s+1)}(\mathcal{K})}^2.
  \end{equation}
  For the advective facet terms, we use
  \cref{eq:proj_est_conv_5,eq:proj_est_conv_6} and the triangle
  inequality:
  \begin{multline}
    \label{eq:conv_anal_5}
    \norm[1]{
      \envert[0]{\beta_s-\tfrac{1}{2}\beta\cdot{n}}^{1/2}
      \del[1]{
        \del[0]{u-\Pi_hu}-\del[0]{\gamma(u)-\Pi_h^\mathcal{F}\gamma(u)}
      }
    }_{\partial\mathcal{K}}^2
    +
    \norm[0]{
      \envert[0]{
        \tfrac{1}{2}\beta\cdot{n}
      }^{1/2}
      \del[0]{\gamma(u)-\Pi_h^\mathcal{F}\gamma(u)}
    }_{\partial\mathcal{K}\cap\partial\mathcal{E}_N}^2
    \\
    \leq
    c
    \del[0]{h_K^{2p_s+1}+{\delta t_{\mathcal{K}}}^{2p_t+1}}
    \norm[0]{u}_{H^{(p_t+1,p_s+1)}(\mathcal{K})}^2.
  \end{multline}
  Similarly, for the diffusive facet term, we again apply the triangle
  inequality and \cref{eq:proj_est_conv_5,eq:proj_est_conv_6}:
  \begin{equation}
    \label{eq:conv_anal_6}
    \varepsilon h_K^{-1}
    \norm[0]{
      \del[0]{u-\Pi_hu}
      -
      \del[0]{\gamma(u)-\Pi_h^{\mathcal{F}}\gamma(u)}
    }_{\mathcal{Q}_\mathcal{K}}^2
    \leq
    c
    \varepsilon
    \del[0]{h_K^{2p_s}+h_K^{-1}{\delta t_{\mathcal{K}}}^{2p_t+1}}
    \norm[0]{u}_{H^{(p_t+1,p_s+1)}(\mathcal{K})}^2.
  \end{equation}
  For the streamline derivative term, we use
  \cref{eq:proj_est_conv_2,eq:proj_est_conv_3} and that
  $\tfrac{\delta t_{\mathcal{K}}h_K^2}{\delta t_{\mathcal{K}}+h_K}\leq
  \delta t_{\mathcal{K}}h_K$:
  \begin{equation}
    \label{eq:conv_anal_supg}
    \begin{split}
      \tfrac{\delta t_{\mathcal{K}}h_K^2}{\delta t_{\mathcal{K}}+h_K}
      \norm{\Pi_h\del[0]{\beta\cdot\nabla\del[0]{u-\Pi_hu}}}_{\mathcal{K}}^2
      \leq
      &
      c
      \delta t_{\mathcal{K}}h_K
      \del[1]{
        \norm[0]{
          \overline{\nabla}\del[0]{u-\Pi_hu}
        }_{\mathcal{K}}^2
        +
        \norm[0]{
          \partial_t\del[0]{u-\Pi_hu}
        }_{\mathcal{K}}^2
      }
      \\
      \leq
      &
      c
      \delta t_{\mathcal{K}}h_K
      \del[0]{h_K^{2p_s}+{\delta t_{\mathcal{K}}}^{2p_t}}
      \norm[0]{u}_{H^{(p_t+1,p_s+1)}(\mathcal{K})}^2.
    \end{split}
  \end{equation}
  Finally, for the time-derivative term, using
  \cref{eq:proj_est_conv_3},
  \begin{equation}
    \label{eq:conv_anal_3}
    \begin{split}
      \tau_{\varepsilon}
      \norm[0]{
        \partial_t\del{u-\Pi_hu}
      }_\mathcal{K}^2
      &\le
      \begin{cases}
        c
        \del[0]{h_K^{2p_s}{\delta t_{\mathcal{K}}}+{\delta t_{\mathcal{K}}}^{2p_t+1}}
        \norm[0]{u}_{H^{(p_t+1,p_s+1)}(\mathcal{K})}^2
        &
        \text{if } \mathcal{K}\in\mathcal{T}_h^d,
        \\
	c
        \varepsilon^{1/2}
        \del[0]{h_K^{2p_s}{\delta t_{\mathcal{K}}}+{\delta t_{\mathcal{K}}}^{2p_t+1}}
        \norm[0]{u}_{H^{(p_t+1,p_s+1)}(\mathcal{K})}^2
        &
        \text{if } \mathcal{K}\in\mathcal{T}_h^x,
        \\
	c
        \varepsilon
        \del[0]{h_K^{2p_s}{\delta t_{\mathcal{K}}}+{\delta t_{\mathcal{K}}}^{2p_t+1}}
        \norm[0]{u}_{H^{(p_t+1,p_s+1)}(\mathcal{K})}^2
        &
        \text{if } \mathcal{K}\in\mathcal{T}_h^c.
      \end{cases}
      \\
      &\le
      c
      \tilde{\varepsilon}\del[0]{h_K^{2p_s}{\delta t_{\mathcal{K}}}+{\delta t_{\mathcal{K}}}^{2p_t+1}}
      \norm[0]{u}_{H^{(p_t+1,p_s+1)}(\mathcal{K})}^2.
    \end{split}
  \end{equation}
  The result follows after combining
  \cref{eq:conv_anal_2,eq:conv_anal_4,eq:conv_anal_5,eq:conv_anal_supg,eq:conv_anal_6,eq:conv_anal_3}
  and summing over all $\mathcal{K} \in \mathcal{T}_h$.
\end{proof}

The following lemma will be used to prove the global error estimate of
\cref{thm:global_err_est}.

\begin{lemma}
  \label{lem:ahuminPihu}
  Let $u$, with $u|_{\mathcal{K}} \in H^{(p_t+1,p_s+1)}(\mathcal{K})$
  for all $\mathcal{K} \in \mathcal{T}_h$, solve \cref{eq:st_adr} and
  define $\boldsymbol{u} := (u, \lambda)$ with
  $\lambda = u|_{\Gamma}$. Let
  $\boldsymbol{\Pi}_h\boldsymbol{u} = (\Pi_hu,\Pi_h^{\mathcal{F}}u)$
  and let
  $\boldsymbol{u}_h=\del[0]{u_h,\lambda_h} \in \boldsymbol{V}_h$ be
  the solution to \cref{eq:st_hdg_adr_compact}. The following holds:
  \begin{multline*}
    |a_h(\boldsymbol{u}-\boldsymbol{\Pi}_h\boldsymbol{u}, \boldsymbol{v}_h)|
    \\
    \le
    \sbr[2]{
    c	\tnorm{\boldsymbol{u}-\boldsymbol{\Pi}_h\boldsymbol{u}}_{ss}
    +
    c	\norm[0]{\envert[0]{\beta_s- \tfrac{1}{2}\beta\cdot{n}}^{1/2}
      \del[0]{u-\Pi_hu}}_{\partial\mathcal{T}_h}
    +\del[1]{
      \sum_{\mathcal{K}\in\mathcal{T}_h}
      \varepsilon h_K
      \norm[0]{
        \overline{\nabla}_{\bar{{n}}}\del[0]{u-\Pi_hu}
      }_{\mathcal{Q}_{\mathcal{K}}}^2
    }^{1/2}
    }
    \tnorm{\boldsymbol{v}_h}_s.
  \end{multline*}
\end{lemma}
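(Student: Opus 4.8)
The plan is to write $\boldsymbol{\eta} := \boldsymbol{u} - \boldsymbol{\Pi}_h\boldsymbol{u} = (\eta,\eta_\lambda)$ with $\eta := u - \Pi_h u$ and $\eta_\lambda := \lambda - \Pi_h^{\mathcal{F}}u$, to split $a_h = a_{h,d} + a_{h,c}$, and to estimate each resulting term against the three quantities on the right-hand side, but carrying the weaker factor $\tnorm{\boldsymbol{v}_h}_v$ (which suffices, since $\tnorm{\cdot}_v \le \tnorm{\cdot}_s$). Three observations will be used throughout. First, $\eta$ is $L^2(\mathcal{K})$-orthogonal to $V_h^{(p_t,p_s)}$ on every $\mathcal{K}\in\mathcal{T}_h$ by definition of $\Pi_h$. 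Second, each weighted quadratic contribution of $\boldsymbol{\eta}$ that appears in $\tnorm{\cdot}_v^2$ — in particular $\sum_{\mathcal{K}}\norm[0]{\eta}_{\mathcal{K}}^2$, $\sum_{\mathcal{K}}\varepsilon\norm[0]{\overline{\nabla}\eta}_{\mathcal{K}}^2$, $\sum_{\mathcal{K}}\varepsilon h_K^{-1}\norm[0]{\sbr[0]{\boldsymbol{\eta}}}_{\mathcal{Q}_{\mathcal{K}}}^2$, $\sum_{\mathcal{K}}\norm[0]{\envert[0]{\beta_s-\tfrac12\beta\cdot n}^{1/2}\sbr[0]{\boldsymbol{\eta}}}_{\partial\mathcal{K}}^2$ and $\sum_{F\in\partial\mathcal{E}_N}\norm[0]{\envert[0]{\tfrac12\beta\cdot n}^{1/2}\eta_\lambda}_F^2$ — is bounded by $\tnorm{\boldsymbol{\eta}}_v^2 \le \tnorm{\boldsymbol{\eta}}_{ss}^2$. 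Third, on every face one has $\beta_s \le 2(\beta_s-\tfrac12\beta\cdot n)$, $\envert[0]{\beta_s-\beta\cdot n}\le 2(\beta_s-\tfrac12\beta\cdot n)$ and $\tfrac12\envert[0]{\beta\cdot n}\le\beta_s-\tfrac12\beta\cdot n$, all from \cref{eq:betasinfmax}.

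For the diffusive part, I would treat the four terms of $a_{h,d}(\boldsymbol{\eta},\boldsymbol{v}_h)$ with the Cauchy--Schwarz inequality and the trace inequality \cref{eq:eg_inv_3}, as in \cref{eq:bnd_v}. The volume term $\del[0]{\varepsilon\overline{\nabla}\eta,\overline{\nabla}v_h}_{\mathcal{T}_h}$, the penalty term, and $-\langle\varepsilon\sbr[0]{\boldsymbol{\eta}},\overline{\nabla}_{\bar n}v_h\rangle_{\mathcal{Q}_h}$ are all controlled by the norm pieces of the second observation, hence by $c\tnorm{\boldsymbol{\eta}}_{ss}\tnorm{\boldsymbol{v}_h}_v$. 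For the remaining term I would factor $-\langle\varepsilon\overline{\nabla}_{\bar n}\eta,\sbr[0]{\boldsymbol{v}_h}\rangle_{\mathcal{Q}_h} \le \sum_{\mathcal{K}}\bigl(\varepsilon^{1/2}h_K^{1/2}\norm[0]{\overline{\nabla}_{\bar n}\eta}_{\mathcal{Q}_{\mathcal{K}}}\bigr)\bigl(\varepsilon^{1/2}h_K^{-1/2}\norm[0]{\sbr[0]{\boldsymbol{v}_h}}_{\mathcal{Q}_{\mathcal{K}}}\bigr)$ and bound it, after Cauchy--Schwarz for sums, by $\bigl(\sum_{\mathcal{K}}\varepsilon h_K\norm[0]{\overline{\nabla}_{\bar n}(u-\Pi_h u)}_{\mathcal{Q}_{\mathcal{K}}}^2\bigr)^{1/2}\tnorm{\boldsymbol{v}_h}_v$, which is the third term in the claim.

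The advective part is the core of the argument. Since $\nabla\cdot\beta = 0$, $-\del[0]{\beta\eta,\nabla v_h}_{\mathcal{T}_h} = -\del[0]{\eta,\beta\cdot\nabla v_h}_{\mathcal{T}_h}$. The cell-mean field $\beta_0 = (1,\bar{\beta}_0)$ satisfies $\beta_0\cdot\nabla v_h = \partial_t v_h + \bar{\beta}_0\cdot\overline{\nabla}v_h\in V_h^{(p_t,p_s)}$ on each element (as used in the proof of \cref{lem:inf_sup_v_norm_wtd_proj}), so the first observation gives $-\del[0]{\eta,\beta\cdot\nabla v_h}_{\mathcal{T}_h} = -\del[0]{\eta,(\bar{\beta}-\bar{\beta}_0)\cdot\overline{\nabla}v_h}_{\mathcal{T}_h}$, the time components of $\beta-\beta_0$ being zero. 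Then Cauchy--Schwarz, the oscillation bound $\norm[0]{\bar{\beta}-\bar{\beta}_0}_{L^\infty(\mathcal{K})}\le ch_K$ of \cref{eq:betaminbeta0} (using $\norm[0]{\bar{\beta}}_{W^{1,\infty}}\le c$), and the inverse inequality \cref{eq:eg_inv_2} yield $-\del[0]{\beta\eta,\nabla v_h}_{\mathcal{T}_h}\le c\sum_{\mathcal{K}}\norm[0]{\eta}_{\mathcal{K}}\,h_K\,h_K^{-1}\norm[0]{v_h}_{\mathcal{K}}\le c\bigl(\sum_{\mathcal{K}}\norm[0]{\eta}_{\mathcal{K}}^2\bigr)^{1/2}\tnorm{\boldsymbol{v}_h}_v\le c\tnorm{\boldsymbol{\eta}}_{ss}\tnorm{\boldsymbol{v}_h}_v$. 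The Neumann term $\langle\zeta^+\beta\cdot n\,\eta_\lambda,\mu_h\rangle_{\partial\mathcal{E}_N}$ is estimated via $\envert[0]{\zeta^+\beta\cdot n}\le\envert[0]{\beta\cdot n}$ and Cauchy--Schwarz by $c\bigl(\sum_F\norm[0]{\envert[0]{\tfrac12\beta\cdot n}^{1/2}\eta_\lambda}_F^2\bigr)^{1/2}\bigl(\sum_F\norm[0]{\envert[0]{\tfrac12\beta\cdot n}^{1/2}\mu_h}_F^2\bigr)^{1/2}\le c\tnorm{\boldsymbol{\eta}}_{ss}\tnorm{\boldsymbol{v}_h}_v$. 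Finally, using $\sbr[0]{\boldsymbol{\eta}} = \eta-\eta_\lambda$ I would rewrite $\langle(\beta\cdot n)\eta_\lambda + \beta_s\sbr[0]{\boldsymbol{\eta}},\sbr[0]{\boldsymbol{v}_h}\rangle_{\partial\mathcal{T}_h} = \langle(\beta\cdot n)\eta + (\beta_s-\beta\cdot n)\sbr[0]{\boldsymbol{\eta}},\sbr[0]{\boldsymbol{v}_h}\rangle_{\partial\mathcal{T}_h}$, and apply Cauchy--Schwarz on each $\partial\mathcal{K}$ with the third observation: the $(\beta\cdot n)\eta$ part is absorbed into $c\norm[0]{\envert[0]{\beta_s-\tfrac12\beta\cdot n}^{1/2}(u-\Pi_h u)}_{\partial\mathcal{T}_h}\tnorm{\boldsymbol{v}_h}_v$, and the $(\beta_s-\beta\cdot n)\sbr[0]{\boldsymbol{\eta}}$ part into $c\tnorm{\boldsymbol{\eta}}_v\tnorm{\boldsymbol{v}_h}_v\le c\tnorm{\boldsymbol{\eta}}_{ss}\tnorm{\boldsymbol{v}_h}_v$. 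Summing all contributions and using $\tnorm{\boldsymbol{v}_h}_v\le\tnorm{\boldsymbol{v}_h}_s$ finishes the proof.

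The hard part is the advective volume term $-\del[0]{\beta\eta,\nabla v_h}_{\mathcal{T}_h}$. A brute-force estimate (Cauchy--Schwarz followed by $\norm[0]{\overline{\nabla}v_h}_{\mathcal{K}}\le ch_K^{-1}\norm[0]{v_h}_{\mathcal{K}}$) would only give $c\bigl(\sum_{\mathcal{K}}h_K^{-2}\norm[0]{\eta}_{\mathcal{K}}^2\bigr)^{1/2}\tnorm{\boldsymbol{v}_h}_v = \mathcal{O}(h^{p_s})$, which is one half order below $\tnorm{\boldsymbol{\eta}}_{ss}=\mathcal{O}(h^{p_s+1/2})$ and would reduce the convergence rate obtained in \cref{thm:global_err_est}. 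Subtracting the cell mean $\bar{\beta}_0$ and invoking the elementwise $L^2$-orthogonality of $\eta$ to $V_h^{(p_t,p_s)}$, so that only the $\mathcal{O}(h_K)$ oscillation $\bar{\beta}-\bar{\beta}_0$ is left to pair against $\overline{\nabla}v_h$, is precisely what recovers the missing power of $h_K$.
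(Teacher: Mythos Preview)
Your proposal is correct and follows essentially the same route as the paper: the $\beta_0$-subtraction and orthogonality argument for the advective volume term, the $\varepsilon^{1/2}h_K^{1/2}$ split for $\langle\varepsilon\overline{\nabla}_{\bar n}\eta,[\boldsymbol{v}_h]\rangle_{\mathcal{Q}_h}$, and the weighted Cauchy--Schwarz estimates for the remaining terms all match. The only cosmetic difference is in the advective facet term, where the paper uses $\eta_\lambda + [\boldsymbol{\eta}] = \eta$ to absorb everything into $\norm[0]{\envert[0]{\beta_s-\tfrac12\beta\cdot n}^{1/2}(u-\Pi_h u)}_{\partial\mathcal{T}_h}$, whereas you split into $(\beta\cdot n)\eta$ and $(\beta_s-\beta\cdot n)[\boldsymbol{\eta}]$ and put the second piece into $\tnorm{\boldsymbol{\eta}}_{ss}$; both are valid and yield the stated bound.
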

\begin{proof}
  We start with the advective part of $a_h(\cdot, \cdot)$. Writing
  $\zeta^+ \beta \cdot n =
  (\beta\cdot{n}+\envert[0]{\beta\cdot{n}})/2$ and using the triangle
  inequality,
  \begin{multline*}
    |a_{h,c}(\boldsymbol{u}-\boldsymbol{\Pi}_h\boldsymbol{u},\boldsymbol{v}_h)|
    \leq
    \envert[0]{ \del[0]{\beta \del[0]{u-\Pi_hu},\nabla v_h}_{\mathcal{T}_h} }
    +
    \envert[0]{
      \langle
      \tfrac{1}{2}
      \del{\beta\cdot{n}+\envert[0]{\beta\cdot{n}}}
      \del[0]{\lambda-\Pi_h^{\mathcal{F}}\lambda}
      ,
      \mu_h
      \rangle_{\partial\mathcal{E}_N}
    }
    \\
    +
    \envert[0]{
      \langle
      \del{\beta\cdot{n}}
      \del[0]{\lambda-\Pi_h^{\mathcal{F}}\lambda}
      +
      \beta_s \sbr{\boldsymbol{u}-\boldsymbol{\Pi}_h\boldsymbol{u}}
      ,\sbr{\boldsymbol{v}_h}
      \rangle_{\partial\mathcal{T}_h}
    }
    =:I_1 + I_2 + I_3.
  \end{multline*}
  To bound $I_1$, we follow the proof of \citet[Theorem
  5.1]{Ayuso:2009} by noting that if $\beta_0 = (1,\bar{\beta}_0)$
  then
  ${\del[0]{{\beta_0} \del[0]{u-\Pi_hu},\nabla v_h}_{\mathcal{T}_h}
  }=0$ and
  ${\del[0]{\del[0]{\beta-\beta_0} \del[0]{u-\Pi_hu},\nabla
      v_h}_{\mathcal{T}_h}
  }={\del[0]{\del[0]{\bar{\beta}-\bar{\beta}_0}
      \del[0]{u-\Pi_hu},\overline{\nabla} v_h}_{\mathcal{T}_h}
  }$. Then, using the Cauchy--Schwarz inequality,
  \cref{eq:betaminbeta0}, \cref{eq:eg_inv_2}, and H\"older's
  inequality for sums, we obtain
  \begin{equation*}
    I_1
    \leq
    \sum_{\mathcal{K}\in\mathcal{T}_h}
    c
    \norm[0]{u-\Pi_hu}_\mathcal{K}
    \norm[0]{v_h}_\mathcal{K}.
  \end{equation*}
  Using the Cauchy--Schwarz inequality, we bound $I_2$ as:
  \begin{equation*}
    I_2
    \leq
    c
    \norm[0]{
      \envert[0]{\tfrac{1}{2}\beta\cdot{n}}^{1/2}
      \del[0]{\lambda-\Pi_h^{\mathcal{F}}\lambda}
    }_{\partial\mathcal{E}_N}
    \norm[0]{
      \envert[0]{\tfrac{1}{2}\beta\cdot{n}}^{1/2}
      \mu_h
    }_{\partial\mathcal{E}_N}.
  \end{equation*}
  With
  $ \beta\cdot n \leq {\sup\envert[0]{\beta\cdot{n}}} \leq {2}
  \del[0]{ \sup\envert[0]{\beta\cdot{n}} - \tfrac{1}{2}\beta\cdot{n}
  }$, for all $F\in\partial\mathcal{T}_h$, and the Cauchy--Schwarz
  inequality, we bound $I_3$ as:
  \begin{equation*}
    \begin{split}
      I_3
      &
      \leq
      c
      \envert[0]{
        \langle
        \del[0]{
          \sup\envert[0]{\beta\cdot{n}}
          -
          \tfrac{1}{2}\beta\cdot{n}
        }
        \del[0]{
          \lambda-\Pi_h^{\mathcal{F}}\lambda
          +
          \sbr{\boldsymbol{u}-\boldsymbol{\Pi}_h\boldsymbol{u}}
        }
        ,\sbr{\boldsymbol{v}_h}
        \rangle_{\partial\mathcal{T}_h}
      }
      \\
      &
      \leq
      c
      \sum_{\mathcal{K}\in\mathcal{T}_h}
      \norm[0]{
        \envert[0]{
          \beta_s- \tfrac{1}{2}\beta\cdot{n}
        }^{1/2}
        \del[0]{u-\Pi_hu}
      }_{\partial\mathcal{K}}
      \norm[0]{
        \envert[0]{
          \beta_s -
          \tfrac{1}{2}\beta\cdot{n}
        }^{1/2}
        \sbr{\boldsymbol{v}_h}
      }_{\partial\mathcal{K}}.
    \end{split}
  \end{equation*}
  Collecting the bounds for $I_1$, $I_2$, and $I_3$, and using
  H\"older's inequality for sums,
  \begin{equation}
    \label{eq:ahcboundupiu}
    |a_{h,c}(\boldsymbol{u}-\boldsymbol{\Pi}_h\boldsymbol{u},\boldsymbol{v}_h)|
    \le
    \del[1]{
      c
      \tnorm{\boldsymbol{u}-\boldsymbol{\Pi}_h\boldsymbol{u}}_{ss}
      +
      c
      \norm[0]{
        \envert[0]{\beta_s- \tfrac{1}{2}\beta\cdot{n}}^{1/2}
        \del[0]{u-\Pi_hu}}_{\partial\mathcal{T}_h}
    }
    \tnorm{\boldsymbol{v}_h}_{s}.
  \end{equation}
  We now proceed with the diffusive part of $a_h(\cdot, \cdot)$. By
  the triangle inequality,
  \begin{equation*}
    \begin{split}
    \envert[0]{a_{h,d}(\boldsymbol{u}-\boldsymbol{\Pi}_h\boldsymbol{u},\boldsymbol{v}_h)}
    \leq &
    \envert[0]{
      \del[0]{
        \varepsilon\overline{\nabla}\del[0]{u-\Pi_hu}
        ,
        \overline{\nabla}v_h
      }_{\mathcal{T}_h}
    }
    +
    \envert[0]{
      \langle
      \varepsilon\alpha h_K^{-1}
      \sbr{\boldsymbol{u}-\boldsymbol{\Pi}_h\boldsymbol{u}},
      \sbr{\boldsymbol{v}_h}
      \rangle_{\mathcal{Q}_h}
    }
    \\
    &
    +
    \envert[0]{
      \langle
      \varepsilon\sbr{\boldsymbol{u}-\boldsymbol{\Pi}_h\boldsymbol{u}}
      ,
      \overline{\nabla}_{\bar{{n}}}v_h
      \rangle_{\mathcal{Q}_h}
    }
    +
    \envert[0]{
      \langle
      \varepsilon \overline{\nabla}_{\bar{{n}}}\del[0]{u-\Pi_hu}
      ,
      \sbr{\boldsymbol{v}_h}
      \rangle_{\mathcal{Q}_h}
    }.
    \end{split}
  \end{equation*}
  By applying the Cauchy--Schwarz inequality, the first two terms on
  the right-hand side can be bounded by
  $\del[0]{1+\alpha}\tnorm{\boldsymbol{u}-\boldsymbol{\Pi}_h\boldsymbol{u}}_{s}\tnorm{\boldsymbol{v}_h}_s$.
  For the last two terms, by the Cauchy--Schwarz inequality and
  \cref{eq:eg_inv_3},
  \begin{equation*}
    \begin{split}
      | \langle
      \varepsilon \sbr[0]{\boldsymbol{u} - \boldsymbol{\Pi}_h\boldsymbol{u}}
      ,
      \overline{\nabla}_{\bar{{n}}}v_h
      \rangle_{\mathcal{Q}_h} |
      +&
      \envert[0]{
        \langle
        \varepsilon \overline{\nabla}_{\bar{{n}}}\del[0]{u-\Pi_hu}
        ,
        \sbr{\boldsymbol{v}_h}
        \rangle_{\mathcal{Q}_h}
      }
      \\
      \leq
      &
      c
      \sum_{\mathcal{K}\in\mathcal{T}_h}
      \varepsilon^{1/2}h_K^{-1/2}
      \norm[0]{
        \sbr{\boldsymbol{u}-\boldsymbol{\Pi}_h\boldsymbol{u}}
      }_{\mathcal{Q}_\mathcal{K}}
      \varepsilon^{1/2}
      \norm[0]{
        \overline{\nabla}v_h
      }_{\mathcal{K}}
      \\
      &
      +
      \sum_{\mathcal{K}\in\mathcal{T}_h}
      \varepsilon^{1/2}h_K^{1/2}
      \norm[0]{
        \overline{\nabla}_{\bar{{n}}}\del[0]{u-\Pi_hu}
      }_{\mathcal{Q}_\mathcal{K}}
      \varepsilon^{1/2}h_K^{-1/2}
      \norm[0]{
        \sbr{\boldsymbol{v}_h}
      }_{\mathcal{Q}_\mathcal{K}}.
    \end{split}
  \end{equation*}
  Therefore, using H\"older's inequality for sums,
  \begin{equation}
    \label{eq:bnd_proj_diff_1}
    \envert[0]{a_{h,d}(\boldsymbol{u}-\boldsymbol{\Pi}_h\boldsymbol{u},\boldsymbol{v}_h)}
    \leq
    \del[1]{
      c
      \tnorm{\boldsymbol{u}-\boldsymbol{\Pi}_h\boldsymbol{u}}_{ss}
      +
      \del[1]
      {
        \sum_{\mathcal{K}\in\mathcal{T}_h}
        \varepsilon h_K
        \norm[0]{
          \overline{\nabla}_{\bar{{n}}}\del[0]{u-\Pi_hu}
        }_{\mathcal{Q}_{\mathcal{K}}}^2
      }^{1/2}
    }
    \tnorm{\boldsymbol{v}_h}_s.
  \end{equation}
  The result follows by combining \cref{eq:ahcboundupiu} and
  \cref{eq:bnd_proj_diff_1}.
\end{proof}

\begin{theorem}[Global error estimate]
  \label{thm:global_err_est}
  Let $\boldsymbol{u}$ and $\boldsymbol{u}_h$ be as in
  \cref{lem:ahuminPihu}. Then
  \begin{equation*}
    \tnorm{\boldsymbol{u}-\boldsymbol{u}_h}_{ss}^2
    \le
    c_T
    \sbr{
      h^{2p_s}(h + \varepsilon + \tilde{\varepsilon} \delta t)
      +
      \delta t^{2p_t+1}(1 + \varepsilon h^{-1})
    }.
  \end{equation*}
  where  $c_T$ depends on
  $\sum_{\mathcal{K}\in\mathcal{T}_h}\norm[0]{u}_{H^{(p_t+1,p_s+1)}(\mathcal{K})}$.
\end{theorem}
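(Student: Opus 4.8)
The plan is to split the error in the standard way and combine the inf-sup stability of \cref{thm:supginfsup} with Galerkin orthogonality. Write $\boldsymbol{u}-\boldsymbol{u}_h = \boldsymbol{\eta} + \boldsymbol{\xi}_h$, where $\boldsymbol{\eta} := \boldsymbol{u}-\boldsymbol{\Pi}_h\boldsymbol{u}$ is the projection error and $\boldsymbol{\xi}_h := \boldsymbol{\Pi}_h\boldsymbol{u}-\boldsymbol{u}_h \in \boldsymbol{V}_h$ the discrete error. By the triangle inequality $\tnorm{\boldsymbol{u}-\boldsymbol{u}_h}_{ss} \le \tnorm{\boldsymbol{\eta}}_{ss} + \tnorm{\boldsymbol{\xi}_h}_{ss}$; the first term is bounded directly by \cref{lem:uminPihusbound}, whose estimate already has exactly the form claimed in the theorem (note that $\delta t^{2p_t}(\delta t + \varepsilon h^{-1}\delta t) = \delta t^{2p_t+1}(1+\varepsilon h^{-1})$). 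It remains to bound $\tnorm{\boldsymbol{\xi}_h}_{ss}$.

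For the discrete error, apply \cref{thm:supginfsup} to $\boldsymbol{\xi}_h\in\boldsymbol{V}_h$, giving $c_T^{-1}\tnorm{\boldsymbol{\xi}_h}_{ss} \le \sup_{\boldsymbol{v}_h\in\boldsymbol{V}_h} a_h(\boldsymbol{\xi}_h,\boldsymbol{v}_h)/\tnorm{\boldsymbol{v}_h}_s$. Since $u$ solves \cref{eq:st_adr} with the assumed regularity, the HDG discretization is consistent, i.e.\ $\boldsymbol{u}$ itself satisfies \cref{eq:st_hdg_adr_compact} for every $\boldsymbol{v}_h\in\boldsymbol{V}_h$: the HDG jump $\sbr{\boldsymbol{u}}$ vanishes because $\lambda = u|_\Gamma$, and integration by parts reproduces the strong form of the equation and the Neumann datum. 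Subtracting \cref{eq:st_hdg_adr_compact} for $\boldsymbol{u}_h$ yields Galerkin orthogonality $a_h(\boldsymbol{u}-\boldsymbol{u}_h,\boldsymbol{v}_h)=0$, hence $a_h(\boldsymbol{\xi}_h,\boldsymbol{v}_h) = -a_h(\boldsymbol{\eta},\boldsymbol{v}_h)$. Now \cref{lem:ahuminPihu} bounds $|a_h(\boldsymbol{\eta},\boldsymbol{v}_h)|$ by $\del[1]{c\tnorm{\boldsymbol{\eta}}_{ss} + c\norm[0]{\envert[0]{\beta_s-\tfrac12\beta\cdot n}^{1/2}(u-\Pi_h u)}_{\partial\mathcal{T}_h} + (\sum_{\mathcal{K}}\varepsilon h_K\norm[0]{\overline{\nabla}_{\bar n}(u-\Pi_h u)}_{\mathcal{Q}_\mathcal{K}}^2)^{1/2}}\tnorm{\boldsymbol{v}_h}_s$, so dividing by $\tnorm{\boldsymbol{v}_h}_s$ and taking the supremum gives $\tnorm{\boldsymbol{\xi}_h}_{ss} \le c_T\del[1]{\tnorm{\boldsymbol{\eta}}_{ss} + (\text{two trace terms})}$.

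The last step is to bound those two trace terms by the projection estimates \cref{eq:proj_est_conv}. Using $\envert[0]{\beta_s-\tfrac12\beta\cdot n}\le \tfrac32\beta_s \le c$ (which holds since $\norm[0]{\bar\beta}_{L^\infty}\le 1$ and $n$ is a unit space-time normal, so $\beta_s=\sup\envert[0]{\beta\cdot n}\le\sqrt 2$) together with \cref{eq:proj_est_conv_5}, the first trace term is $O(h^{2p_s+1}+\delta t^{2p_t+1})$; using \cref{eq:proj_est_conv_4}, the second satisfies $\sum_{\mathcal{K}}\varepsilon h_K\norm[0]{\overline{\nabla}_{\bar n}(u-\Pi_h u)}_{\mathcal{Q}_\mathcal{K}}^2 \le c\sum_{\mathcal{K}}\varepsilon(h_K^{2p_s}+\delta t_{\mathcal{K}}^{2p_t+2})\norm[0]{u}_{H^{(p_t+1,p_s+1)}(\mathcal{K})}^2$, which — using $\delta t_{\mathcal{K}}\le h_K$ — is absorbed into $h^{2p_s}(h+\varepsilon+\tilde\varepsilon\delta t)+\delta t^{2p_t+1}(1+\varepsilon h^{-1})$. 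Squaring the bound on $\tnorm{\boldsymbol{\xi}_h}_{ss}$, combining with \cref{lem:uminPihusbound} for $\tnorm{\boldsymbol{\eta}}_{ss}^2$, and absorbing $\sum_{\mathcal{K}}\norm[0]{u}_{H^{(p_t+1,p_s+1)}(\mathcal{K})}^2$ into the constant gives the claim. The only non-routine point is the consistency step: one must verify that inserting the exact solution into $a_h$ reproduces the right-hand side, which requires the element traces and normal derivatives of $u$ to be well defined (ensured by $u\in H^2(\mathcal{E})$) and the HDG jump of $\boldsymbol{u}$ to vanish; everything else is an assembly of \cref{lem:uminPihusbound,lem:ahuminPihu}, \cref{thm:supginfsup}, and \cref{eq:proj_est_conv}.
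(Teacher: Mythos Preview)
Your proof is correct and follows essentially the same route as the paper: split $\boldsymbol{u}-\boldsymbol{u}_h$ into projection and discrete parts, bound the former by \cref{lem:uminPihusbound}, and for the latter combine the inf-sup condition \cref{thm:supginfsup} with Galerkin orthogonality (which the paper simply cites from \cite{Kirk:2019} rather than re-deriving) and \cref{lem:ahuminPihu}, then estimate the two residual trace terms via \cref{eq:proj_est_conv_5,eq:proj_est_conv_4}. The only cosmetic difference is that you sketch the consistency argument explicitly, whereas the paper invokes it as a known fact.
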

\begin{proof}
  We start by noting that Galerkin orthogonality was shown in
  \citet{Kirk:2019}:
  \begin{equation}
    \label{eq:galerkin_ort}
    a_h(\boldsymbol{u}-\boldsymbol{u}_h,\boldsymbol{v}_h)=0
    \quad
    \forall
    \boldsymbol{v}_h:=\del{v_h,\mu_h}\in \boldsymbol{V}_h.
  \end{equation}
  By a triangle inequality, \cref{thm:supginfsup}, and
  \cref{eq:galerkin_ort} we find:
  \begin{equation*}
    \tnorm{\boldsymbol{u}_h-\boldsymbol{u}}_{ss}
    \le
    \tnorm{\boldsymbol{u}-\boldsymbol{\Pi}_h\boldsymbol{u}}_{ss}
    +
    c_T
    \sup_{\boldsymbol{v}_h \in \boldsymbol{V}_h}
    \frac{a_h(\boldsymbol{u}-\boldsymbol{\Pi}_h\boldsymbol{u}, \boldsymbol{v}_h)}{
      \tnorm{\boldsymbol{v}_h}_{s}}.
  \end{equation*}
  Using \cref{lem:ahuminPihu},
  \begin{multline}
    \label{eq:isupu_final}
    \tnorm{\boldsymbol{u}_h-\boldsymbol{u}}_{ss}
    \le
    c_T
    \tnorm{\boldsymbol{u}-\boldsymbol{\Pi}_h\boldsymbol{u}}_{ss}
    +
    c_T
    \norm[0]{
      \envert[0]{
        \beta_s
        -
        \tfrac{1}{2}\beta\cdot{n}
      }^{1/2}
      \del[0]{u-\Pi_hu}
    }_{\partial\mathcal{T}_h}
    \\
    +
    c_T
    \del[1]
    {
      \sum_{\mathcal{K}\in\mathcal{T}_h}
      \varepsilon h_K
      \norm[0]{
        \overline{\nabla}_{\bar{{n}}}\del[0]{u-\Pi_hu}
      }_{\mathcal{Q}_{\mathcal{K}}}^2
    }^{1/2}.
  \end{multline}
  The second term on the right-hand side of \cref{eq:isupu_final} is
  bounded using \cref{eq:proj_est_conv_5}:
  \begin{equation}
    \label{eq:conv_anal_7}
    \sum_{\mathcal{K}\in\mathcal{T}_h}
    \norm[0]{
      \envert[0]{
        \beta_s
        -
        \tfrac{1}{2}\beta\cdot{n}
      }^{1/2}
      \del[0]{u-\Pi_hu}
    }_{\partial\mathcal{K}_h}^2
    \le
    c
    \del[0]{h^{2p_s+1}+{\delta t}^{2p_t+1}}
    \sum_{\mathcal{K}\in\mathcal{T}_h}
    \norm[0]{u}_{H^{(p_t+1,p_s+1)}(\mathcal{K})}^2.
  \end{equation}
  The last term on the right-hand side of \cref{eq:isupu_final} is
  bounded using \cref{eq:proj_est_conv_4}:
  \begin{equation}
    \label{eq:conv_anal_8}
    \sum_{\mathcal{K}\in\mathcal{T}_h}\varepsilon h_K
    \norm[0]{
      \overline{\nabla}_{\bar{{n}}}
      \del{u-\Pi_hu}
    }_{\mathcal{Q}_{\mathcal{K}}}^2
    \le
    c
    \varepsilon
    \del[0]{h^{2p_s}+{\delta t}^{2p_t+2}}
    \sum_{\mathcal{K}\in\mathcal{T}_h}
    \norm[0]{u}_{H^{(p_t+1,p_s+1)}(\mathcal{K})}^2.
  \end{equation}
  The result follows after combining
  \cref{eq:isupu_final,eq:conv_anal_7,eq:conv_anal_8,lem:uminPihusbound}.
\end{proof}

\begin{remark}
  \label{rem:leadingordertermsestimate}
  The error estimate of \cref{thm:global_err_est} shows that if
  $\varepsilon < \delta t = h$ then
  $\tnorm{\boldsymbol{u}-\boldsymbol{u}_h}_{ss} =
  \mathcal{O}(h^{p_s+1/2} + \delta t^{p_t+1/2})$, while if
  $\delta t = h < \varepsilon$ then
  $\tnorm{\boldsymbol{u}-\boldsymbol{u}_h}_{ss} = \mathcal{O}(h^{p_s}
  + \delta t^{p_t})$.
\end{remark}

%------------------------------------------------------------------------------
\section{Numerical example}
\label{s:numerics}

We consider the solution of a two-dimensional rotating Gaussian pulse
on a deforming domain \citep{Rhebergen:2013} to demonstrate the
convergence properties of the space-time HDG method predicted by
\cref{thm:global_err_est}. In \cref{eq:st_adr} we set
$\beta = \del[0]{1,-4x_2,4x_1}^T$ and $f=0$. Defining
$\widetilde{x}_1 := x_1\cos(4t)+x_2\sin(4t)$ and
$\widetilde{x}_2 := -x_1\sin(4t)+x_2\cos(4t)$, the exact solution to
this problem is given by
\begin{equation*}
  u(t,x_1,x_2)
  =
  \tfrac{\sigma^2}{\sigma^2+2\varepsilon t}
  \exp\del[1]{
    -\tfrac{\del[0]{\widetilde{x}_1-x_{1c}}^2+\del[0]{\widetilde{x}_2-x_{2c}}^2}{2\sigma^2+4\varepsilon t}
  },
\end{equation*}
with initial and boundary conditions set appropriately. We choose
$\sigma = 0.1$ and $(x_{1c},x_{2c})=(-0.2,0.1)$. The deforming domain
$\Omega(t)$ is obtained by transforming a uniform mesh, with
coordinates $(x_1^u,x_2^u) \in (-0.5,0.5)^2$, to
\begin{equation}
  \label{eq:ex_deform_1}
  x_i
  =
  x_i^u+A(\tfrac{1}{2}-x_i^u)
  \sin(2\pi(\tfrac{1}{2}-x_i^*+t)),
  \quad
  i=1,2,
\end{equation}
where $\del[0]{x_1^*,x_2^*}=\del[0]{x_2,x_1}$ and $A=0.1$. We consider
this problem for $t \in [0,1]$.

The space-time HDG method \cref{eq:st_hdg_adr_compact} is implemented
using the finite element library deal.II \citep{dealII94} on
unstructured hexahedral space-time meshes with p4est
\citep{Bangerth:2011} to obtain distributed mesh information. We use
PETSc \citep{petsc-web-page,petsc-user-ref,petsc-efficient} to solve
the linear systems arising at each time step (GMRES preconditioned by
classical algebraic multigrid from BoomerAMG \citep{Henson:2002} with
an absolute solver tolerance of $10^{-12}$). To create our coarsest
mesh, we start with an initial mesh with elements of size
$h \approx \delta t = 10^{-1}$. Space-time elements in a ring
prescribed by $|((x_1^c)^2+(x_2^c)^2)^{1/2}-0.2|<0.1$, where
$(x_1^c,x_2^c)$ is the spatial coordinate of the center of a
space-time element, are then uniformly refined once and are of size
$h\approx \delta t = 0.05$. See \cref{fig:Gaussian-moving-mesh} for
plots of the solution and spatial mesh at different time levels. The
reason to consider two sets of elements is to verify that the analysis
of previous sections hold on 1-irregular space-time meshes. Finer
meshes are obtained by uniformly refining our coarsest mesh. In our
implementation we furthermore choose the penalty parameter
$\alpha = 8 p_s^2$ \citep[see, for example][]{Riviere:book}. We show
the rates of convergence for different polynomial degrees when the
error is measured in $\tnorm{\cdot}_{ss}$ in
\cref{tb:guasspulse_conv_m2,tb:guasspulse_conv_m8} for
$\varepsilon = 10^{-2}$ and $\varepsilon = 10^{-8}$, respectively.

\begin{figure}[tbp]
  \centering
  \begin{subfigure}{0.3\textwidth}
    \centering
    \includegraphics[width=\textwidth]{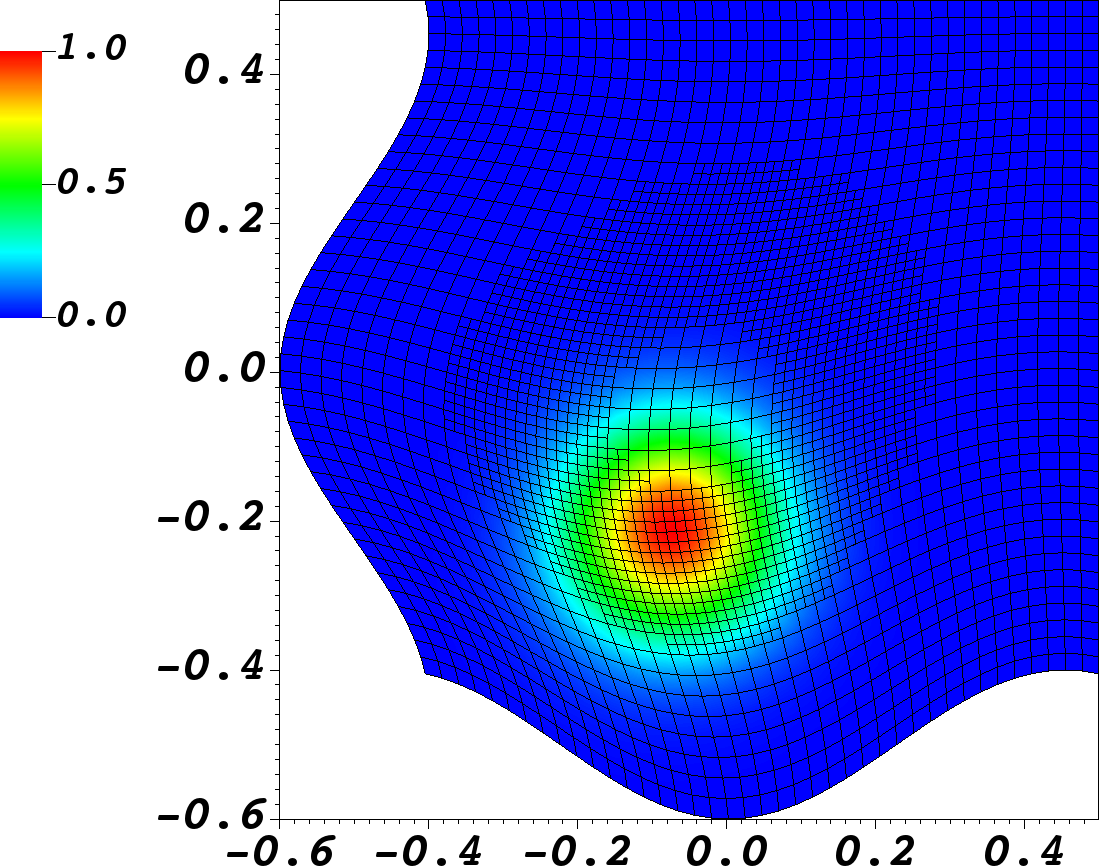}
  \end{subfigure}
  \;\;
  \begin{subfigure}{0.3\textwidth}
    \centering
    \includegraphics[width=\textwidth]{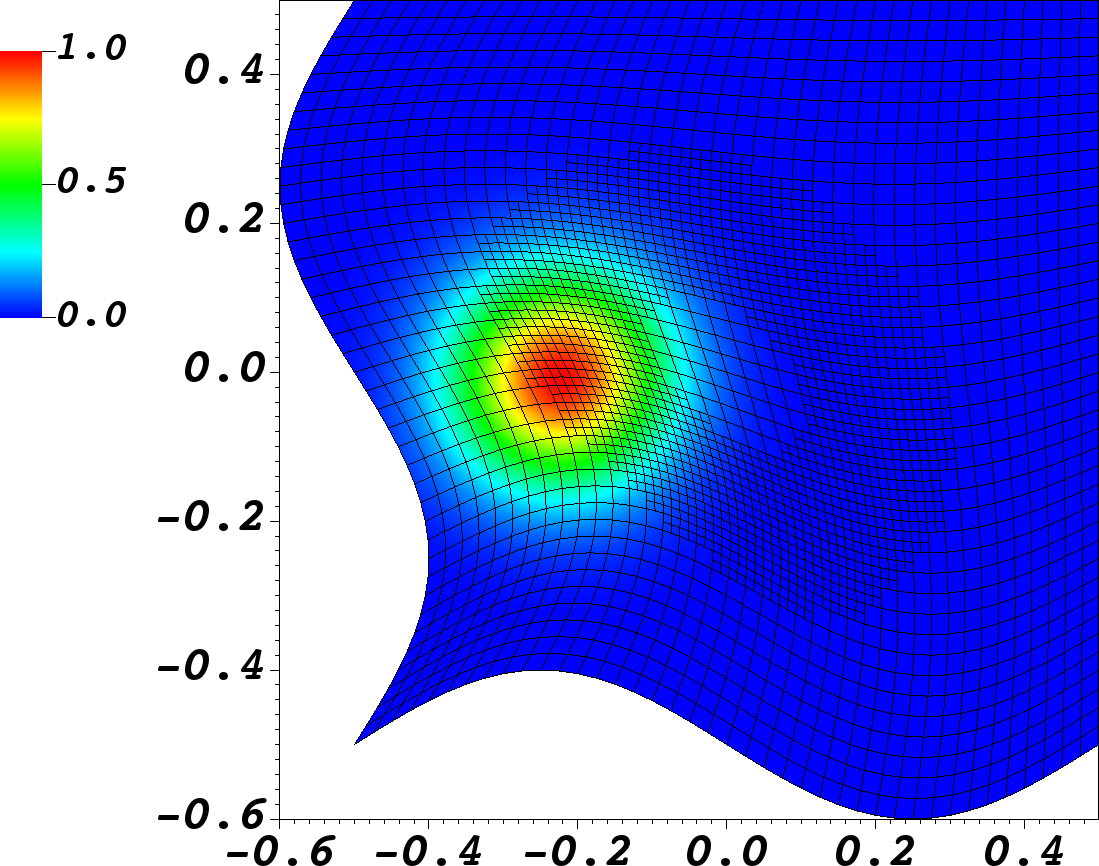}
  \end{subfigure}
  \;\;
  \begin{subfigure}{0.3\textwidth}
    \centering
    \includegraphics[width=\textwidth]{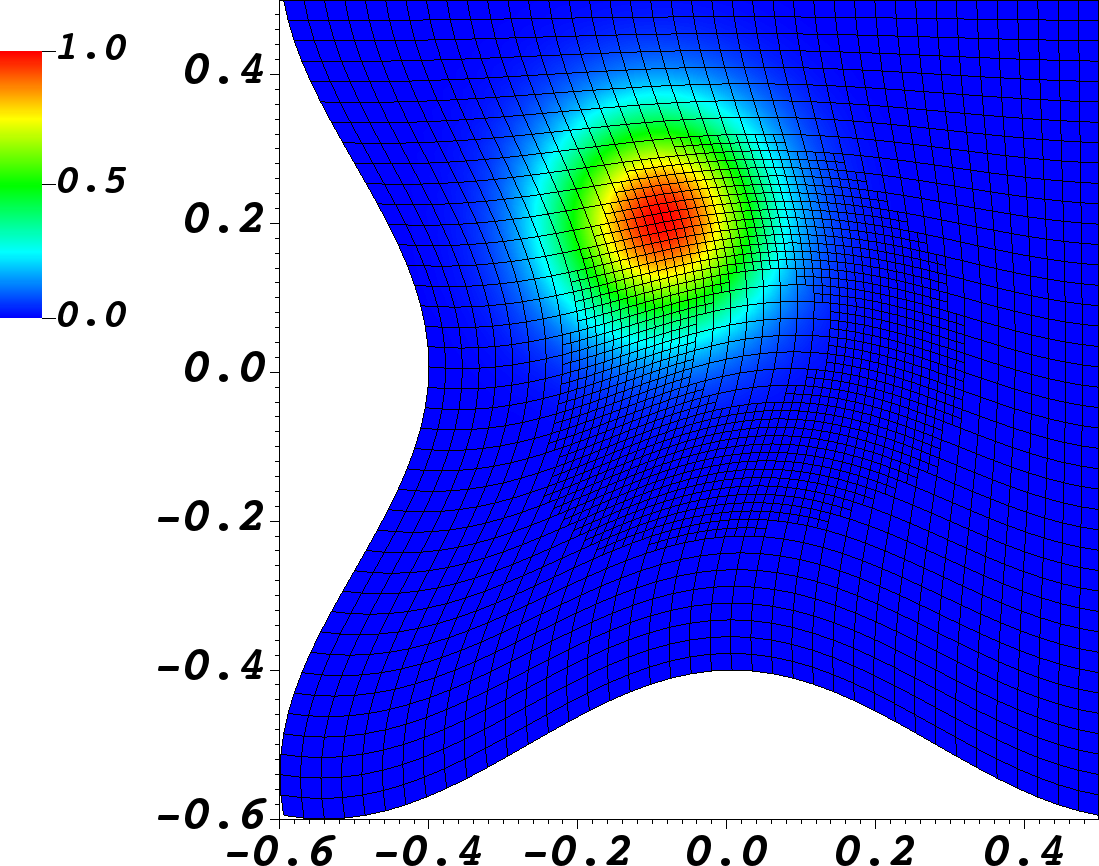}
  \end{subfigure}
  \caption{The spatial mesh and the ring of elements with an extra
    level of refinement deform over time. The solution shown is for
    $\varepsilon=10^{-8}$. Plots correspond to time levels $t = 0.2,0.5,0.8$
    from left to right.}
  \label{fig:Gaussian-moving-mesh}
\end{figure}

In the third row of \cref{tb:guasspulse_conv_m2} we have that
$h\approx\delta t =1.25\times 10^{-2}$ inside the refined ring while
elsewhere $h\approx\delta t =2.5\times 10^{-2}$. Therefore, for the
first three rows in \cref{tb:guasspulse_conv_m2},
$h\approx\delta t \ge\varepsilon=10^{-2}$ and we observe a rate of convergence
of approximately $p+1/2$. In the following three rows we observe a
drop in the rate of convergence to approximately $p$. This happens in
two stages since there are two sets of elements in our mesh, see
\cref{fig:Gaussian-moving-mesh}.  In the first stage (the fourth row
of \cref{tb:guasspulse_conv_m2}), elements in the refined ring are
such that $h\approx\delta t=6.25\times 10^{-3}<\varepsilon$, but elsewhere
$h\approx\delta t=1.25\times 10^{-2}>\varepsilon$. In the next stage (fifth
row of \cref{tb:guasspulse_conv_m2}), all elements satisfy
$h\approx\delta t<\varepsilon$, resulting in a rate of convergence of $p$
after the fifth row.  In \cref{tb:guasspulse_conv_m8} we observe that
$h \approx \delta t>\varepsilon=10^{-8}$ for all cycles and the error
converges at a rate of approximately $p+1/2$. These observations from
\cref{tb:guasspulse_conv_m2,tb:guasspulse_conv_m8} are in agreement
with \cref{rem:leadingordertermsestimate}.

\begin{table}[tbp]
  \caption{The solution errors measured in
    $|\mkern-1.5mu|\mkern-1.5mu|\cdot|\mkern-1.5mu|\mkern-1.5mu|_{ss}$
    and corresponding rates of convergence when using polynomial
    approximation $p=1,2,3$ for the case $\varepsilon=10^{-2}$.}
  \label{tb:guasspulse_conv_m2}
  \centering
  \begin{tabular}{cc|cc|cc|cc}
    \hline
     Cells per slab & Number of slabs & $p=1$ & Rates & $p=2$
     & Rates & $p=3$ & Rates\\
     \hline
     296 & 10 & 4.7e-2 & - & 7.8e-3 & - &1.3e-3&-\\
     1100& 20 & 1.8e-2 & 1.4 & 1.6e-3 & 2.4 &1.2e-4&3.6\\
     4372& 40 & 7.7e-3 & 1.3 & 3.2e-4 & 2.3 &1.7e-5&3.4\\
     17572& 80 & 3.7e-3 & 1.1 & 7.3e-5 & 2.1 &1.4e-6&3.2\\
     70540&160 & 2.0e-3 & 0.9 & 2.3e-5 & 1.7 & 2.4e-7&2.4\\
     282580&320 & 9.0e-4 & 1.1 & 4.9e-6 & 2.2 & 2.5e-8& 3.3\\
     \hline
  \end{tabular}
\end{table}

\begin{table}[tbp]
  \caption{The solution errors measured in
    $|\mkern-1.5mu|\mkern-1.5mu|\cdot|\mkern-1.5mu|\mkern-1.5mu|_{ss}$
    and corresponding rates of convergence when using polynomial
    approximation $p=1,2,3$ for the case $\varepsilon=10^{-8}$.}
  \label{tb:guasspulse_conv_m8}
  \centering
  \begin{tabular}{cc|cc|cc|cc}
    \hline
    Cells per slab & Number of slabs & $p=1$ & Rates & $p=2$
    & Rates & $p=3$ & Rates\\
    \hline
    289 & 10 & 1.1e-1 & - & 1.6e-2 & - &2.8e-3&-\\
    1086& 20 & 3.9e-2 & 1.5 & 2.8e-3 & 2.7 &2.3e-4&3.8\\
    4372& 40 & 1.1e-2 & 1.8 & 4.4e-4 & 2.7 &1.8e-5&3.7\\
    17572& 80 & 3.4e-3 & 1.7 & 7.1e-5 & 2.6 &1.4e-6&3.7\\
    70540&160 & 1.1e-3 & 1.6 & 1.2e-5 & 2.6 &1.1e-7&3.6\\
    282580&320 & 4.0e-4 & 1.5 & 2.1e-6& 2.5 &9.7e-9&3.6 \\
    \hline
  \end{tabular}
\end{table}

%------------------------------------------------------------------------------
\section{Conclusions}
\label{s:conclusions}

In this paper we analyzed the space-time HDG method for the
time-dependent advection-diffusion equation on deforming domains. We
proved stability of the discretization in the advection-dominated
regime using a time-dependent weighted test function, and derived a
priori error estimates that show a drop of $1/2$ in the rate of
convergence in a mesh dependent norm when transitioning from a mesh
size larger than the diffusion parameter to a mesh size smaller than
the diffusion parameter. A numerical example supports our theoretical
analysis.

%------------------------------------------------------------------------------
\appendix
%------------------------------------------------------------------------------
\section{Proof of \cref{eq:projforstab}}
\label{s:projforstab}

The following results from \citet[Lemma B.7]{Sudirham:thesis} will be
useful in what follows: For
$\widetilde{u} \in H^{(k_t,k_s)}(\widetilde{\mathcal{K}})$,
$0 \le \alpha_t \le k_t$, $\alpha_s=(\alpha_1,\hdots,\alpha_d)$ with
$|\alpha_s| \le k_s$, and
$\widetilde{F}_{\mathcal{Q}} \subset
\mathcal{Q}_{\widetilde{\mathcal{K}}}$:
\begin{subequations}
  \label{eq:lb7sudir}
  \begin{align}
    \label{eq:lb7sudir-1}
    \norm[0]{\partial_{\widehat{t}}^{\alpha_t}\partial_{\widehat{x}}^{\alpha_s}\widehat{u}}_{\widehat{\mathcal{K}}}^2
    &=
    \del[1]{\tfrac{1}{2}}^{2(\alpha_t+|\alpha_s|)-d-1}
    \delta t_{\mathcal{K}}^{2\alpha_t-1}
    h_K^{2|\alpha_s|-d}
    \norm[0]{\partial_{\widetilde{t}}^{\alpha_t}\partial_{\widetilde{x}}^{\alpha_s}\widetilde{u}}_{\widetilde{\mathcal{K}}}^2,
    \\
    \label{eq:lb7sudir-2}
    \norm[0]{\partial_{\widehat{t}}^{\alpha_t}\partial_{\widehat{x}}^{\alpha_s}\widehat{u}}_{\widehat{F}_{\mathcal{Q}}}^2
    &= \del[1]{\tfrac{1}{2}}^{2(\alpha_t+|\alpha_s|)-d}
      \delta t_{\mathcal{K}}^{2\alpha_t-1}h_K^{2|\alpha_s|-d+1}
      \norm[0]{\partial_{\widetilde{t}}^{\alpha_t}\partial_{\widetilde{x}}^{\alpha_s}\widetilde{u}}_{\widetilde{F}_{\mathcal{Q}}}^2.
  \end{align}
\end{subequations}

To begin, we note that the projection operator on
$\widehat{\mathcal{K}}$ and $\widetilde{\mathcal{K}}$ are related to
the projection operator on $\mathcal{K}$ as follows
\citep[see][Definition 3.12]{Georgoulis:thesis}:
\begin{equation*}
  \widetilde{\Pi}_h
  \widetilde{u}
  =
  \del[0]{
    {\Pi_h}
    \del[0]{
      \widetilde{u}\circ {\phi}_{\mathcal{K}}^{-1}
    }
  }\circ{\phi}_{\mathcal{K}}
  \quad
  \forall\widetilde{u}\in L^2(\widetilde{\mathcal{K}}),
  \qquad
  \widehat{\Pi}
  \widehat{u}
  =
  \del[0]{
    \widetilde{\Pi}_h
    \del[0]{\widehat{u}\circ G_{\mathcal{K}}^{-1} }
  }\circ G_{\mathcal{K}}
  \quad
  \forall\widehat{u}\in L^2({\widehat{\mathcal{K}}}).
\end{equation*}
Similarly, on any $F\in\partial{\mathcal{K}}$, we have the following
relations:
\begin{equation*}
  \widetilde{\Pi}_h^{\mathcal{F}}
  \widetilde{u}
  =
  \del[0]{
    {\Pi_h^{\mathcal{F}}}
    \del[0]{
      \widetilde{u}\circ {\phi}_{F}^{-1}
    }
  }\circ{\phi}_{F}
  \
  \forall\widetilde{u}\in L^2(\widetilde{F}),
  \quad
  \widehat{\Pi}^{\mathcal{F}}
  \widehat{u}
  =
  \del[0]{
    \widetilde{\Pi}_h^{\mathcal{F}}
    \del[0]{\widehat{u}\circ G_{F}^{-1} }
  }\circ G_{F}
  \
  \forall\widehat{u}\in L^2({\widehat{F}}),
\end{equation*}
where $G_F$ and $\phi_F$ are the restrictions of $G_{\mathcal{K}}$ and
$\phi_{\mathcal{K}}$ on $F$, respectively.

To show \cref{eq:proj_spatial_grad} we first note that the following
projection estimate holds on $\widehat{\mathcal{K}}$ for any
$1\leq i\leq d$ \citep[see][lemma 3.7, eq. (3.12)]{Georgoulis:thesis}:
\begin{equation}
  \label{eq:projestpartialxhat}
  \norm[0]{\partial_{\widehat{x}_i}\del[0]{\widehat{u}-\widehat{\Pi}\widehat{u}}}_{\widehat{\mathcal{K}}}
  \leq
  c
  \norm[0]{\partial_{\widehat{x}_i}\widehat{u}}_{\widehat{\mathcal{K}}}.
\end{equation}
By the chain rule
$
{\partial_{\widetilde{x}_i}}\del[0]{\widetilde{u}-\widetilde{\Pi}_h\widetilde{u}}
= 2h_K^{-1} {\partial_{\widehat{x}_i}}\del[0]{
  \del[0]{\widetilde{u}-\widetilde{\Pi}_h\widetilde{u}} \circ
  {G}_{\mathcal{K}} } \circ {G}_{\mathcal{K}}^{-1} $, using that
$|\text{det }G_{\mathcal{K}}| = \delta t_{\mathcal{K}}h_K^d2^{-d-1}$,
and using \cref{eq:projestpartialxhat,eq:lb7sudir-1}, we find
\begin{equation}
  \label{eq:tildeprojectionestimatexi}
  \norm[0]{
    {\partial_{\widetilde{x}_i}}
    \del[0]{\widetilde{u}-\widetilde{\Pi}_h\widetilde{u}}
  }_{\widetilde{\mathcal{K}}}^2
  \leq
  c
  {\delta t_{\mathcal{K}}}
  h_K^{d-2}
  \norm[0]{
    {\partial_{\widehat{x}_i}}{\widehat{u}}
  }^2
  \leq
  c
  \norm[0]{
    {\partial_{\widetilde{x}_i}\widetilde{u}}
  }_{\widehat{\mathcal{K}}}^2.
\end{equation}
To obtain the result on the physical element, consider first that by
the chain rule,
\begin{equation*}
  {\partial_{{x}_i}}({u}-\Pi_{h}{u})
  =
  \sum_{1\leq j\leq d} \del[0]{
    {\partial_{\widetilde{x}_j}}\del[0]{ \del[0]{{u}-\Pi_{h}{u}} \circ
      \phi_\mathcal{K} } \circ \phi_\mathcal{K}^{-1} }
  \del[0]{ (-1)^{i+j}(\det
    J_{\phi_{\mathcal{K}}})^{-1}\det
    J_{\phi_{\mathcal{K}}\backslash ij} },
\end{equation*}
where we used that $\widetilde{t}$ only depends on $t$ in
$\phi_\mathcal{K}^{-1}$ and that
$\tfrac{\partial\widetilde{x}_j}{\partial{x}_i} = (-1)^{i+j}(\det
J_{\phi_{\mathcal{K}}})^{-1}\det J_{\phi_{\mathcal{K}}\backslash
  ij}$. By assumptions \cref{eq:diffeom_regular} and
\cref{eq:diffeom_regular_inv}, and using
\cref{eq:tildeprojectionestimatexi}, we therefore find that:
\begin{equation*}
  \norm[0]{{\partial_{{x}_i}}\del[0]{{u}-\Pi_{h}{u}}}_{\mathcal{K}}^2
  \le
  c
  \sum_{1\leq j\leq d}
  \norm[0]{
    {\partial_{\widetilde{x}_i}}
    \del[0]{\widetilde{u}-\widetilde{\Pi}_h\widetilde{u}}
  }_{\widetilde{\mathcal{K}}}^2
  \leq
  c
  \sum_{1\leq j\leq d}
  \norm[0]{
    \partial_{\widetilde{x}_j}\widetilde{u}
  }_{\widetilde{\mathcal{K}}}^2
  \le
  c
  \norm[0]{
    \overline{\nabla}u
  }_{\mathcal{K}}^2,
\end{equation*}
where the last step reverses the scaling arguments proving
\cref{eq:proj_spatial_grad}. The proof for
\cref{eq:proj_time_derivative} is similar and therefore omitted.

For \cref{eq:proj_diff_elem_facet_Q}, we consider a $d$-dimensional
hypersurface $F_{\mathcal{Q}}\in\mathcal{Q}_{\mathcal{K}}$. We first
map $F_{\mathcal{Q}}$ to the reference domain. For this, let
$\phi_{F_{\mathcal{Q}}}\circ
G_{F_{\mathcal{Q}}}(\widehat{F}_{\mathcal{Q}}) = F_{\mathcal{Q}}$,
i.e., the transformation of a face from the reference domain to the
physical domain. We then observe that one of the spatial coordinates,
which is denoted by $\widehat{x}_j$ without loss of generality, of
$\widehat{F}_\mathcal{Q}$ is fixed. We further consider a
decomposition of
$\widehat{\Pi}=\widehat{\pi}_{\widehat{t}} \Pi_{1\leq i\leq d}
\widehat{\pi}_{\widehat{x}_i}$ where $\widehat{\pi}_{\widehat{t}}$ and
$\widehat{\pi}_{\widehat{x}_i}$ are the one-dimensional
$L^2$-projection operators applied in the time direction and in the
spatial direction $\widehat{x}_i$, respectively. Similarly,
$\del[0]{\widehat{\Pi}^{\mathcal{F}}}|_{\widehat{F}_{\mathcal{Q}}}=\widehat{\pi}_{\widehat{t}}
\Pi_{1\leq i\leq d,i\neq j} \widehat{\pi}_{\widehat{x}_i}$. By
\citet[Definitions 3.1, 3.6]{Georgoulis:thesis}, we have:
\begin{equation}
  \label{eq:pihatpiFhat}
    \norm[0]{
      \widehat{\Pi}
      \widehat{u}
      -
      \widehat{\Pi}^{\mathcal{F}}
      \widehat{u}
    }_{\widehat{F}_\mathcal{Q}}
    =
    \norm[0]{
      \widehat{\pi}_{\widehat{t}}
      \Pi_{1\leq i\leq d,i\neq j}
      \widehat{\pi}_{\widehat{x}_i}
      \del[0]{
        \widehat{u}
        -
        \widehat{\pi}_{\widehat{x}_j}
        \widehat{u}
      }
    }_{\widehat{F}_\mathcal{Q}}
    \leq
    c
    \norm[0]{
      \widehat{u}
      -
      \widehat{\pi}_{\widehat{x}_j}
      \widehat{u}
    }_{\widehat{F}_\mathcal{Q}}
    \leq
    c
    \norm[0]{\partial_{\widehat{x}_j}\widehat{u}}_{\widehat{\mathcal{K}}},
\end{equation}
where the equality is by commutativity of
$\widehat{\pi}_{\widehat{x}_i}$ and $\widehat{\pi}_{\widehat{x}_j}$
($i\ne j$) and the last two inequalities are due to the boundedness of
any composition of projections $\widehat{\pi}$ and \citet[Lemma
3.3]{Georgoulis:thesis}. Next, using \cref{eq:lb7sudir-2},
\cref{eq:pihatpiFhat}, and \cref{eq:lb7sudir-1},
\begin{equation*}
  \norm[0]{
    \widetilde{\Pi}_h\widetilde{u}
    -
    \widetilde{\Pi}_h^{\mathcal{F}}\widetilde{u}
  }_{\widetilde{F}_{\mathcal{Q}_i}}^2
  \le
  c
  h_K
  \norm[0]{\partial_{\widetilde{x}_j}\widetilde{u}}_{\widetilde{\mathcal{K}}}^2.
\end{equation*}
Therefore, also using \cref{eq:k_surface} and
\cref{eq:diffeom_regular_d},
\begin{equation*}
    \norm[0]{
      \Pi_{h}{u}
      -
      \Pi_{h}^{\mathcal{F}}{u}
    }^2_{F_{\mathcal{Q}}}
    \leq
    c
    \norm[0]{
      \widetilde{\Pi}_h\widetilde{u}
      -
      \widetilde{\Pi}_h^{\mathcal{F}}\widetilde{u}
    }_{\widetilde{F}_\mathcal{Q}}^2
    \leq
    c
    h_K
    \norm[0]{\partial_{\widetilde{x}_j}\widetilde{u}}_{\widetilde{\mathcal{K}}}^2
    \le
    c	 h_K
    \norm[0]{\overline{\nabla}u}_{{\mathcal{K}}}^2,
\end{equation*}
where we reverse the scaling arguments in the final inequality,
proving \cref{eq:proj_diff_elem_facet_Q}.

%------------------------------------------------------------------------------
\section{Proof of \cref{lem:proj_with_wt}}
\label{s:proof_prof_with_wt}

To prove \cref{lem:proj_with_wt} we use the following result from
\citet[Lemma B.6]{Sudirham:thesis}. For any
$\mathcal{K}\in\mathcal{T}_h$ and $v\in H^1(\mathcal{K})$, we consider
its affine domain
$\widetilde{\mathcal{K}}:=\phi_\mathcal{K}^{-1}\circ\mathcal{K}$ (with
$\mathcal{Q}_{\widetilde{\mathcal{K}}}:=\phi_\mathcal{K}^{-1}\circ\mathcal{Q}_\mathcal{K}$
and
$\mathcal{R}_{\widetilde{\mathcal{K}}}:=\phi_\mathcal{K}^{-1}\circ\mathcal{R}_\mathcal{K}$)
and
$\widetilde{v}:=v\circ\phi_\mathcal{K}\in
H^1(\widetilde{\mathcal{K}})$. The following trace inequalities hold:
\begin{subequations}
  \label{eq:refined_aniso_trace_aff}
  \begin{align}
    \label{eq:refined_aniso_trace_aff_1}
    \norm[0]{
    \widetilde{v}
    }_{\mathcal{Q}_{\widetilde{\mathcal{K}}}}^2
    &\leq
      c
    \del[0]{
    h_K^{-1}
    \norm[0]{
    \widetilde{v}
    }_{\widetilde{\mathcal{K}}}^2
    +
    \norm[0]{
    \widetilde{v}
    }_{\widetilde{\mathcal{K}}}
    \norm[0]{
    \widetilde{
    \overline{\nabla}
    }
    \widetilde{v}
    }_{\widetilde{\mathcal{K}}}
    }.
    \\
    \norm[0]{
    \widetilde{v}
    }_{\mathcal{R}_{\widetilde{\mathcal{K}}}}^2
    &
      \leq
      c
      \del[0]{
      \delta t_{\mathcal{K}}^{-1}
      \norm[0]{
      \widetilde{v}
      }_{\widetilde{\mathcal{K}}}^2
      +
      \norm[0]{
      \widetilde{v}
      }_{\widetilde{\mathcal{K}}}
      \norm[0]{
      \partial_{\widetilde{t}}
      \widetilde{v}
      }_{\widetilde{\mathcal{K}}}
      }.
      \label{eq:refined_aniso_trace_aff_2}
  \end{align}
\end{subequations}
We further observe that when $\varphi$ is a function of the time
variable only on $\mathcal{K}$, so are $\widetilde{\varphi}$ and
$\widehat{\varphi}$ on $\widetilde{\mathcal{K}}$ and
$\widehat{\mathcal{K}}$ respectively. Therefore, for
$w_h \in V_h^{(p_t,p_s)}$, and for $1 \le i,j \le d$, $j \ne i$,
\begin{equation}
  \label{eq:zeroequalities}
  \partial_{\widetilde{x}_i}^{p_s+1} \del[0]{\widetilde{\varphi
      w_h}}
  =
  \partial_{\widetilde{x}_i}^{p_s+1} \partial_{\widetilde{x}_j}
  \del[0]{\widetilde{\varphi w_h}}
  =
  \partial_{\widetilde{x}_i}^{p_s+1} \partial_{\widetilde{t}}
  \del[0]{\widetilde{\varphi w_h}}
  =
  0.
\end{equation}
The equivalent derivatives above are also zero on the reference domain
$\widehat{\mathcal{K}}$. Furthermore,
\begin{subequations}
  \label{eq:lebnizinverse}
  \begin{align}
    \norm[0]{
    \partial_{\widetilde{t}}^{p_t+1}
    \del[0]{\widetilde{\varphi w_h}}
    }_{\widetilde{\mathcal{K}}}
    &\leq
      c
      \delta t_{\mathcal{K}}^{-p_t}
      \norm[0]{\widetilde{w}_h}_{\widetilde{\mathcal{K}}},
      \label{eq:lebnizinverse_1}
    \\
    \norm[0]{
    \partial_{\widetilde{t}}^{p_t+1}
    \partial_{\widetilde{x}_i}
    \del[0]{\widetilde{\varphi w_h}}
    }_{\widetilde{\mathcal{K}}}
    &\leq
      c
      \delta t_{\mathcal{K}}^{-p_t}
      h_K^{-1}
      \norm[0]{\widetilde{w}_h}_{\widetilde{\mathcal{K}}}
      \label{eq:lebnizinverse_2}.
  \end{align}
\end{subequations}
\Cref{eq:lebnizinverse_1} can be shown using the general Leibniz rule,
that $\partial_{\widetilde{t}}^{p_t+1}\widetilde{w_h}=0$, that
$\norm[0]{\partial_{\widetilde{t}}^{j_t}\widetilde{\varphi}}_{\widetilde{\mathcal{K}}}
\le e$ for all $1\le j_t\le p_t+1$, that \cref{eq:eg_inv_1} reduces to
$\norm[0]{\partial_{\widetilde{t}}\widetilde{v}_h}_{\widetilde{\mathcal{K}}}
\leq c {\delta
  t_{\mathcal{K}}^{-1}}\norm[0]{\widetilde{v}_h}_{\widetilde{\mathcal{K}}}
$ on the axiparallel element $\widetilde{\mathcal{K}}$
\citep[see][Corollary 3.54]{Georgoulis:thesis} and using that
$\delta t_{\mathcal{K}}<1$. Similar arguments can be used to show
\cref{eq:lebnizinverse_2}.

To prove \cref{eq:proj_with_wt_1} we follow the proof of \citet[Lemma
4.2]{Ayuso:2009}, and apply the projection estimates in \citet[Lemma
3.4]{Houston:2002} when considered on the affine domain
$\widetilde{\mathcal{K}}$, \cref{eq:zeroequalities} and
\cref{eq:lebnizinverse_1},
\begin{equation}
  \label{eq:some_eq_102}
  \norm[0]{
    \widetilde{\varphi w_h}
    -
    \widetilde{\Pi}_h
    \del{\widetilde{\varphi w_h}}
  }_{\widetilde{\mathcal{K}}}
  \le
  c
  {{\delta t_{\mathcal{K}}}}^{p_t+1}
  \norm[0]{
    \partial_{\widetilde{t}}^{p_t+1}
    \del{\widetilde{\varphi w_h}}
  }_{\widetilde{\mathcal{K}}}
  \leq
  c
  \delta t_{\mathcal{K}}
  \norm[0]{
    \widetilde{w_h}
  }_{\widetilde{\mathcal{K}}}.
\end{equation}
A scaling argument applied to \cref{eq:some_eq_102} from
$\widetilde{\mathcal{K}}$ to $\mathcal{K}$ yields
\cref{eq:proj_with_wt_1}.

We next prove \cref{eq:proj_with_wt_2}. First note,
\begin{equation}
  \label{eq:some_eq_40}
  \norm[1]{
    \widetilde{\overline{\nabla}}
    \del[1]{
      \widetilde{\varphi w_h}
      -
      \widetilde{\Pi}_h
      \del[0]{\widetilde{\varphi w_h}}
    }
  }_{\widetilde{\mathcal{K}}}^2
  \le
  \sum_{1\leq i\leq d}
  c
  {h_K^{d-2}{{\delta t_{\mathcal{K}}}}}
  \norm[0]{
    \partial_{\widehat{x}_{i}}
    \del[0]{
      \widehat{\varphi w_h}
      -
      \widehat{\Pi}_h
      \del[0]{\widehat{\varphi w_h}}
    }
  }_{\widehat{\mathcal{K}}}^2.
\end{equation}
Following similar steps as in the proof of \citet[Lemma
7.5]{Georgoulis:2006}, the right-hand side of \cref{eq:some_eq_40} can
be bound further using the triangle inequality, commutativity of
$\partial_{\widehat{x}_i}$ with
$ \widehat{\pi}_{t} \Pi_{1\leq j\leq d,j\neq i} \widehat{\pi}_{x_j}$,
boundedness of
$ \widehat{\pi}_{t} \Pi_{1\leq j\leq d,j\neq i} \widehat{\pi}_{x_j}$,
the projection estimates in \citet[Lemma 3.4]{Houston:2002} and
\citet[Lemma 7.3]{Georgoulis:2006}, \cref{eq:zeroequalities},
\cref{eq:lb7sudir-1}, and \cref{eq:lebnizinverse_2},
\begin{equation}
  \label{eq:some_eq_103}
  \begin{split}
    \norm[0]{
      \partial_{\widehat{x}_{i}}
      \del[1]{
        \widehat{\varphi w_h}
        -
        \widehat{\Pi}_h
        \del[0]{\widehat{\varphi w_h}}
      }
    }_{\widehat{\mathcal{K}}}
    &\leq
    \norm[0]{
      \partial_{\widehat{x}_{i}}
      \widehat{\varphi w_h}
      -
      \widehat{\Pi}_h
      \del[0]{
        \partial_{\widehat{x}_{i}}
        \widehat{\varphi w_h}
      }
    }_{\widehat{\mathcal{K}}}
    +
    c
    \norm[0]{
      \widehat{\pi}_{x_i}
      \del[0]{
        \partial_{\widehat{x}_{i}}
        \widehat{\varphi w_h}
      }
      -
      \partial_{\widehat{x}_{i}}
      \del[0]{
        \widehat{\pi}_{x_i}
        \del[0]{\widehat{\varphi w_h}}
      }
    }_{\widehat{\mathcal{K}}}
    \\
    &\leq
    c
    \norm[0]{
      \partial_{\widehat{t}}^{p_t+1}
      \partial_{\widehat{x}_{i}}
      \widehat{\varphi w_h}
    }_{\widehat{\mathcal{K}}}
    \leq
    c
    h_K^{-d/2}\delta t_{\mathcal{K}}^{1/2}
    \norm[0]{ \widetilde{w_h} }_{\widetilde{\mathcal{K}}}.
  \end{split}
\end{equation}
Combining the right-hand side of \cref{eq:some_eq_103} with
\cref{eq:some_eq_40}, we find:
\begin{equation}
  \label{eq:some_eq_108}
  \norm[0]{
    \widetilde{\overline{\nabla}}
    \del[0]{
      \widetilde{\varphi w_h}
      -
      \widetilde{\Pi}_h
      \del[0]{\widetilde{\varphi w_h}}
    }
  }_{\widetilde{\mathcal{K}}}
  \leq
  c
  {\delta t_{\mathcal{K}}h_K^{-1}}
  \norm[0]{ \widetilde{w_h} }_{\widetilde{\mathcal{K}}}.
\end{equation}
A scaling argument applied to \cref{eq:some_eq_108} from
$\widetilde{\mathcal{K}}$ to $\mathcal{K}$ yields
\cref{eq:proj_with_wt_2}. With similar steps it can be shown that
\begin{equation}
  \label{eq:some_eq_109}
  \norm[0]{
    \partial_{\widetilde{t}}
    \del[0]{
      \widetilde{\varphi w_h}
      -
      \widetilde{\Pi}_h
      \del[0]{\widetilde{\varphi w_h}}
    }
  }_{\widetilde{\mathcal{K}}}
  \leq
  c
  \norm[0]{\widetilde{w}_h}_{\widetilde{\mathcal{K}}}.
\end{equation}

We next prove \cref{eq:proj_with_wt_5}. We start with a scaling
argument to transform the integral on a $\mathcal{Q}$-face
$\widetilde{F}_{\mathcal{Q},m}$ from the affine domain to the
reference domain. Note that, without loss of generality, subscript $m$
denotes the index of the spatial coordinate for which
$\widehat{x}_m\equiv 1$. Using \cref{eq:lb7sudir-2} we find
\begin{equation}
  \label{eq:some_eq_43}
  \norm[0]{
    \widetilde{\overline{\nabla}}
    \del[0]{
      \widetilde{\varphi w_h}
      -
      \widetilde{\Pi}_h
      \del[0]{\widetilde{\varphi w_h}}
    }
  }_{\widetilde{F}_{\mathcal{Q},m}}^2
  \le
  \sum_{1\leq i\leq d}
  c
  \delta t_{\mathcal{K}}h_K^{d-3}
  \norm[0]{
    \partial_{\widehat{x}_{i}}
    \del[0]{
      \widehat{\varphi w_h}
      -
      \widehat{\Pi}_h
      \del[0]{\widehat{\varphi w_h}}
    }
  }_{\widehat{F}_{\mathcal{Q},m}}^2.
\end{equation}
Consider now the right hand side term. Following \citet[Lemma
7.9]{Georgoulis:2006} we consider the cases $i=m$ and $i\neq m$
separately, starting with $i=m$. Using the commutativity of
$\partial_{\widehat{x}_m}$ with
$\widehat{\pi}_{t} \Pi_{1\leq j\leq d,j\neq m} \widehat{\pi}_{x_j}$,
the triangle inequality, and \citet[Lemma 3.47]{Georgoulis:thesis}
\citep[see also][Lemma 7.8]{Georgoulis:2006},
\begin{equation}
  \label{eq:some_eq_45}
  \begin{split}
    \norm[0]{
      \partial_{\widehat{x}_{m}}
      \del[0]{
        \widehat{\varphi w_h}
        -
        \widehat{\Pi}_h
        \del[0]{\widehat{\varphi w_h}}
      }
    }_{\widehat{F}_{\mathcal{Q},m}}
    \leq &
    \norm[0]{
      \partial_{\widehat{x}_{m}}
      \del[0]{ \widehat{\varphi w_h} }
      -
      \widehat{\pi}_{x_m}
      \partial_{\widehat{x}_{m}}
      \del[0]{ \widehat{\varphi w_h} }
    }_{\widehat{F}_{\mathcal{Q},m}}
    \\
    &
    +
    c
    \norm[0]{
      \widehat{\pi}_{x_m}
      \partial_{\widehat{x}_{m}}
      \del[0]{ \widehat{\varphi w_h} }
      -
      \widehat{\pi}_{t}
      \Pi_{1\leq j\leq d,j\neq m}
      \widehat{\pi}_{x_j}
      \del[0]{
        \widehat{\pi}_{x_m}
        \partial_{\widehat{x}_{m}}
        \del[0]{ \widehat{\varphi w_h} }
      }
    }_{\widehat{\mathcal{K}}}
    \\
    &
    +
    c
    \norm[0]{
      \widehat{\pi}_{t}
      \Pi_{1\leq j\leq d,j\neq m}
      \widehat{\pi}_{x_j}
      \del[0]{
        \widehat{\pi}_{x_m}
        \partial_{\widehat{x}_{m}}
        \del[0]{ \widehat{\varphi w_h} }
        -
        \partial_{\widehat{x}_{m}}
        \widehat{\pi}_{x_m}
        \del[0]{ \widehat{\varphi w_h} }
      }
    }_{\widehat{\mathcal{K}}}.
  \end{split}
\end{equation}
The first and third terms on the right-hand side of
\cref{eq:some_eq_45} vanish by \citet[Lemmas 7.2 and
7.3]{Georgoulis:2006} and \cref{eq:zeroequalities}.  The second term
on the right-hand side of \cref{eq:some_eq_45} is bounded using the
same argument as in the proof of \citet[Lemma 3.4]{Houston:2002} by
noting that $\widehat{\pi}_{x_m}$ and $\widehat{\pi}_t$ are
one-dimensional $L^2$-projections applied in the spatial direction
$\widehat{x}_m$ and time direction, respectively, the commutativity of
$\widehat{\pi}_{x_m}$ with $\partial_{\widehat{t}}^{p_t+1}$ and
$\partial_{\widehat{x}_{j}}^{p_s+1}$ ($j\ne m$), the boundedness of
$\widehat{\pi}_{x_m}$, and \cref{eq:zeroequalities},
\begin{equation}
  \label{eq:some_eq_47}
  \norm[0]{
    \widehat{\pi}_{x_m}
    \partial_{\widehat{x}_{m}}
    \del[0]{ \widehat{\varphi w_h} }
    -
    \widehat{\pi}_{t}
    \Pi_{1\leq j\leq d,j\neq m}
    \widehat{\pi}_{x_j}
    \del[0]{
      \widehat{\pi}_{x_m}
      \partial_{\widehat{x}_{m}}
      \del[0]{ \widehat{\varphi w_h} }
    }
  }_{\widehat{\mathcal{K}}}
  \leq
  c
  \norm[0]{
    \partial_{\widehat{t}}^{p_t+1}
    \partial_{\widehat{x}_{m}}
    \del[0]{ \widehat{\varphi w_h} }
  }_{\widehat{\mathcal{K}}},
\end{equation}
so that \cref{eq:some_eq_45} becomes:
\begin{equation}
  \label{eq:some_eq_49}
  \norm[0]{
    \partial_{\widehat{x}_{m}}
    \del[0]{
      \widehat{\varphi w_h}
      -
      \widehat{\Pi}_h
      \del[0]{\widehat{\varphi w_h}}
    }
  }_{\widehat{F}_{\mathcal{Q},m}}
  \leq
  c
  \norm[0]{
    \partial_{\widehat{t}}^{p_t+1}
    \partial_{\widehat{x}_{m}}
    \del[0]{ \widehat{\varphi w_h} }
  }_{\widehat{\mathcal{K}}}.
\end{equation}

We now consider the right hand side of \cref{eq:some_eq_43} with
$i\neq m$. We have by a triangle inequality
\begin{equation}
  \label{eq:some_eq_50}
  \begin{split}
    \norm[0]{
      \partial_{\widehat{x}_{i}}
      \del[0]{
        \widehat{\varphi w_h}
        -
        \widehat{\Pi}_h
        \del[0]{\widehat{\varphi w_h}}
      }
    }_{\widehat{F}_{\mathcal{Q},m}}
    \leq
    &
    \norm[0]{
      \partial_{\widehat{x}_{i}}
      \del[0]{ \widehat{\varphi w_h} }
      -
      \widehat{\pi}_{x_m}
      \partial_{\widehat{x}_{i}}
      \del[0]{ \widehat{\varphi w_h} }
    }_{\widehat{F}_{\mathcal{Q},m}}
    \\
    & +
    \norm[0]{
      \del[0]{
        I-
        \widehat{\pi}_{t}
        \Pi_{1\leq j\leq d,j\neq i,j\neq m}
        \widehat{\pi}_{x_j}
      }
      \widehat{\pi}_{x_m}
      \partial_{\widehat{x}_{i}}
      \del[0]{ \widehat{\varphi w_h} }
    }_{\widehat{F}_{\mathcal{Q},m}}
    \\
    & +
    \norm[0]{
      \partial_{\widehat{x}_{i}}
      \del[0]{ \widehat{\varphi w_h} }
      -
      \partial_{\widehat{x}_{i}}
      \widehat{\pi}_{x_i}
      \del[0]{ \widehat{\varphi w_h} }
    }_{\widehat{F}_{\mathcal{Q},m}}
    \\
    & +
    \norm[0]{
      \del[0]{
        I-
        \widehat{\pi}_{x_m}
      }
      \del[0]{
        \partial_{\widehat{x}_{i}}
        \del[0]{ \widehat{\varphi w_h} }
        -
        \partial_{\widehat{x}_{i}}
        \widehat{\pi}_{x_i}
        \del[0]{ \widehat{\varphi w_h} }
      }
    }_{\widehat{F}_{\mathcal{Q},m}}
    \\
    & +
    \norm[0]{
      \del[0]{
        I-
        \widehat{\pi}_{t}
        \Pi_{1\leq j\leq d,j\neq i,j\neq m}
        \widehat{\pi}_{x_j}
      }
      \widehat{\pi}_{x_m}
      \del[0]{
        \partial_{\widehat{x}_{i}}
        \del[0]{ \widehat{\varphi w_h} }
        -
        \partial_{\widehat{x}_{i}}
        \widehat{\pi}_{x_i}
        \del[0]{ \widehat{\varphi w_h} }
      }
    }_{\widehat{F}_{\mathcal{Q},m}}.
  \end{split}
\end{equation}
For the second and the fifth terms on the right-hand side, we observe
that the functions inside the norms are polynomials in the
$\widehat{x}_m$-direction. Therefore, \citet[Lemma
7.8]{Georgoulis:2006} gives us, using similar steps used to find
\cref{eq:some_eq_47},
\begin{equation*}
  \begin{split}
    &\norm[0]{
      \del[0]{
        I-
        \widehat{\pi}_{t}
        \Pi_{1\leq j\leq d,j\neq i,j\neq m}
        \widehat{\pi}_{x_j}
      }
      \widehat{\pi}_{x_m}
      \partial_{\widehat{x}_{i}}
      \del[0]{ \widehat{\varphi w_h} }
    }_{\widehat{F}_{\mathcal{Q},m}}
    \\
    &\quad
    +
    \norm[0]{
      \del[0]{
        I-
        \widehat{\pi}_{t}
        \Pi_{1\leq j\leq d,j\neq i,j\neq m}
        \widehat{\pi}_{x_j}
      }
      \widehat{\pi}_{x_m}
      \del[0]{
        \partial_{\widehat{x}_{i}}
        \del[0]{ \widehat{\varphi w_h} }
        -
        \partial_{\widehat{x}_{i}}
        \widehat{\pi}_{x_i}
        \del[0]{ \widehat{\varphi w_h} }
      }
    }_{\widehat{F}_{\mathcal{Q},m}}
    \\
    \leq
    &
    c
    \del[1]{
      \norm[0]{
        \partial_{\widehat{t}}^{p_t+1}
        \partial_{\widehat{x}_{i}}
        \del[0]{ \widehat{\varphi w_h} }
      }_{\widehat{\mathcal{K}}}
      +
      \norm[0]{
        \partial_{\widehat{t}}
        \del[0]{
          \partial_{\widehat{x}_{i}}
          \del[0]{ \widehat{\varphi w_h} }
          -
          \partial_{\widehat{x}_{i}}
          \widehat{\pi}_{x_i}
          \del[0]{ \widehat{\varphi w_h} }
        }
      }_{\widehat{\mathcal{K}}}
    }
    \\
    &\quad
    +
    c
    \del[0]{
      \sum_{1\leq j\leq d,j\neq m,j\neq i}
      \norm[0]{
        \partial_{\widehat{x}_{j}}
        \del[0]{
          \partial_{\widehat{x}_{i}}
          \del[0]{ \widehat{\varphi w_h} }
          -
          \partial_{\widehat{x}_{i}}
          \widehat{\pi}_{x_i}
          \del[0]{ \widehat{\varphi w_h} }
        }
      }_{\widehat{\mathcal{K}}}
    }.
  \end{split}
\end{equation*}
Next, using that $\widehat{\pi}_{x_i}$ and $\partial_{\widehat{x}_j}$
commute, using \citet[eq.(7.5) in Lemma 7.3]{Georgoulis:2006} and
\cref{eq:zeroequalities}, we find that for any $j\neq i$ where
$1\le i \le d$ and $0 \le j\le d$ with $\widehat{x}_0 = \widehat{t}$,
\begin{equation*}
  \norm[0]{
    \partial_{\widehat{x}_{j}}
    \del[0]{
      \partial_{\widehat{x}_{i}}
      \del[0]{ \widehat{\varphi w_h} }
      -
      \partial_{\widehat{x}_{i}}
      \widehat{\pi}_{x_i}
      \del[0]{ \widehat{\varphi w_h} }
    }
  }_{\widehat{\mathcal{K}}}^2
  \leq
  c
  \norm[0]{\partial_{\widehat{x}_{i}}^{p_s+1}
    \partial_{\widehat{x}_{j}}
    \del[0]{ \widehat{\varphi w_h} }
  }_{\widehat{\mathcal{K}}}^2
  =0.
\end{equation*}
Therefore, the second and fifth terms on the right hand side of
\cref{eq:some_eq_50} are bounded by
$c\norm[0]{ \partial_{\widehat{t}}^{p_t+1} \partial_{\widehat{x}_{i}}
  \del[0]{ \widehat{\varphi w_h} } }_{\widehat{\mathcal{K}}}$. All
remaining terms on the right hand side of \cref{eq:some_eq_50} vanish
by combining \citet[Lemmas 7.2 and 7.3]{Georgoulis:2006}
\cref{eq:refined_aniso_trace_aff,eq:zeroequalities} and so, for
$i\ne m$,
\begin{equation}
  \label{eq:some_eq_54}
  \norm[0]{
    \partial_{\widehat{x}_{i}}
    \del[0]{
      \widehat{\varphi w_h}
      -
      \widehat{\Pi}_h
      \del[0]{\widehat{\varphi w_h}}
    }
  }_{\widehat{F}_{\mathcal{Q},m}}
  \leq
  c
  \norm[0]{
    \partial_{\widehat{t}}^{p_t+1}
    \partial_{\widehat{x}_{i}}
    \del[0]{ \widehat{\varphi w_h} }
  }_{\widehat{\mathcal{K}}}.
\end{equation}

Combining \cref{eq:some_eq_43,eq:some_eq_49,eq:some_eq_54} and
applying \cref{eq:lb7sudir-1,eq:lebnizinverse_2} we obtain:
\begin{equation}
  \label{eq:some_eq_107}
  \norm[0]{
    \widetilde{\overline{\nabla}}
    \del[0]{
      \widetilde{\varphi w_h}
      -
      \widetilde{\Pi}_h
      \del[0]{\widetilde{\varphi w_h}}
    }
  }_{\widetilde{F}_{\mathcal{Q},m}}
  \leq
  c
  \delta t_{\mathcal{K}}h_K^{-3/2}
  \norm[0]{\widetilde{w}_h}_{\widetilde{\mathcal{K}}}.
\end{equation}
A scaling argument applied to \cref{eq:some_eq_107} from
$\widetilde{\mathcal{K}}$ to $\mathcal{K}$ yields
\cref{eq:proj_with_wt_5}.

\Cref{eq:proj_with_wt_3} follows directly by combining the local trace
inequality \cref{eq:refined_aniso_trace_aff_1} with
\cref{eq:some_eq_102,eq:some_eq_108}:
\begin{equation}
  \label{eq:some_eq_110}
  \norm[0]{
    \widetilde{\varphi w_h}
    -
    \widetilde{\Pi}_h
    \del[0]{\widetilde{\varphi w_h}}
  }_{\widetilde{\mathcal{Q}}_\mathcal{K}}
  \leq
  c
  h_K^{-1/2}
  \delta t_{\mathcal{K}}
  \norm[0]{\widetilde{w_h}}_{\widetilde{\mathcal{K}}},
\end{equation}
and a scaling argument applied to \cref{eq:some_eq_110} from
$\widetilde{\mathcal{K}}$ to $\mathcal{K}$. Lastly,
\cref{eq:proj_with_wt_4} follows by combining the local trace
inequality \cref{eq:refined_aniso_trace_aff_2} with
\cref{eq:some_eq_102,eq:some_eq_109}:
\begin{equation}
  \label{eq:some_eq_111}
  \norm[0]{
    \widetilde{\varphi w_h}
    -
    \widetilde{\Pi}_h
    \del[0]{\widetilde{\varphi w_h}}
  }_{\widetilde{\mathcal{R}}_\mathcal{K}}
  \leq
  c
  {\delta t_{\mathcal{K}}}^{1/2}
  \norm[0]{\widetilde{w_h}}_{\widetilde{\mathcal{K}}},
\end{equation}
and a scaling argument applied to \cref{eq:some_eq_111} from
$\widetilde{\mathcal{K}}$ to $\mathcal{K}$.

%------------------------------------------------------------------------------
\section{Proof of \cref{lem:stab_fortin}}
\label{s:stab_fortin}

We start with volume terms in the definition of $\tnorm{\cdot}_v$
\cref{eq:v_norm}. Due to boundedness of $\Pi_h$, and using
\cref{eq:proj_spatial_grad},
\begin{equation}
  \label{eq:volumetermbound1}
  \sum_{\mathcal{K}\in\mathcal{T}_h}
  \norm[0]{\Pi_h\del[0]{\varphi w_h}}_\mathcal{K}^2
  +
  \sum_{\mathcal{K}\in\mathcal{T}_h}
  \varepsilon
  \norm[0]{
    \overline{\nabla}\del[0]{\Pi_h\del[0]{\varphi w_h}}
  }_{\mathcal{K}}^2
  \leq
  c
  \del[0]{eT+\chi}^2
  \tnorm{\boldsymbol{w}_h}_{v}^2.
\end{equation}
Next, the diffusive facet terms are bounded using a triangle
inequality, \cref{eq:proj_diff_elem_facet_Q}, and boundedness of
$\Pi_h^{\mathcal{F}}$:
\begin{equation}
  \label{eq:diftermbound3}
  \begin{split}
    &
    \sum_{\mathcal{K}\in\mathcal{T}_h}
    \varepsilon h_K^{-1}
    \norm[0]{
      \Pi_h\del[0]{\varphi w_h}
      -
      \Pi_h^{\mathcal{F}}
      \del[0]{\varphi\varkappa_h}
    }_{\mathcal{Q}_\mathcal{K}}^2
    \\
    &\quad \leq
    c
    \sum_{\mathcal{K}\in\mathcal{T}_h}
    \varepsilon h_K^{-1}
    h_K
    \norm[0]{
      \overline{\nabla}
      \del[0]{\varphi w_h}
    }_{\mathcal{K}}^2
    +
    c
    \sum_{\mathcal{K}\in\mathcal{T}_h}
    \varepsilon h_K^{-1}
    \norm[0]{
      \varphi \sbr{\boldsymbol{w}_h}
    }_{\mathcal{Q}_\mathcal{K}}^2
    \leq
    c\del[0]{eT+\chi}^2
    \tnorm{\boldsymbol{w}_h}_v^2.
  \end{split}
\end{equation}
For the Neumann boundary term in the definition of $\tnorm{\cdot}_v$,
consider first a single facet $F \in \partial\mathcal{E}_N$. Then,
\begin{multline*}
  \norm[0]{ \envert[0]{ \tfrac{1}{2} \beta\cdot{n} }^{1/2} \Pi_h^{\mathcal{F}} \del[0]{\varphi\varkappa_h} }_F
  \\
  \le
  \norm[0]{ \envert[0]{ \tfrac{1}{2} \beta\cdot{n} }^{1/2} \del[1]{ \Pi_h^{\mathcal{F}} \del[0]{\varphi\varkappa_h}
      - \Pi_h \del[0]{\varphi w_h} } }_F + \norm[0]{ \envert[0]{ \tfrac{1}{2} \beta\cdot{n} }^{1/2} \Pi_h \del[0]{\varphi w_h} }_F
  :=I + II.
\end{multline*}
For term $I$, using that
$\envert[0]{ \beta\cdot{n} } \le \del[0]{ \max_{(t,x)\in F}
  |\beta\cdot{n}| }$, that
$\Pi_h^{\mathcal{F}}\Pi_h(\varphi w_h) = \Pi_h(\varphi w_h)$ on $F$,
boundedness of $\Pi_h^{\mathcal{F}}$ and a triangle inequality we have:
\begin{equation*}
  \begin{split}
    \norm[0]{ \envert[0]{ \tfrac{1}{2} \beta\cdot{n} }^{1/2}
      \del[1]{ \Pi_h^{\mathcal{F}} \del[0]{\varphi\varkappa_h} - \Pi_h \del[0]{\varphi w_h} } }_F
    \le&
    \del[0]{ \tfrac{1}{2} \max_{(t,x)\in F} |\beta\cdot{n}| }^{1/2}
    \norm[0]{\varphi\varkappa_h - \Pi_h \del[0]{\varphi w_h} }_F
    \\
    \le&
    \del[0]{ \tfrac{1}{2} \max_{(t,x)\in F} |\beta\cdot{n}| }^{1/2}
    \norm[0]{\varphi\varkappa_h - \varphi w_h }_F
    \\
    &+
    \del[0]{ \tfrac{1}{2} \max_{(t,x)\in F} |\beta\cdot{n}| }^{1/2}
    \norm[0]{(I- \Pi_h)\del[0]{\varphi w_h} }_F.
  \end{split}
\end{equation*}
Using that $|\varphi| \le eT+\chi$ and \cref{eq:betasinfmax} for the
first term on the right hand side, and using \cref{eq:proj_with_wt_3}
and that $\delta t_{\mathcal{K}}\leq h_K$ for the second term, we
obtain
\begin{equation}
  \label{eq:termIbound}
  I \le
  \del[0]{eT+\chi}
  \norm[0]{ \del[0]{\beta_s - \tfrac{1}{2}\beta\cdot n}^{1/2}(\varkappa_h - w_h) }_F
  +
  c
  \norm[0]{w_h}_{\mathcal{K}_F},
\end{equation}
where $\mathcal{K}_F$ is the space-time element of which $F$ is a
facet. Next, for term $II$, by a triangle inequality, using
\cref{eq:proj_with_wt_3}, that $\delta t_{\mathcal{K}}\leq h_K$,
$|\varphi| \le eT+\chi$, and \cref{eq:betasinfmax},
\begin{equation*}
  \begin{split}
    II
    &
    \le
    \norm[0]{ \envert[0]{ \tfrac{1}{2} \beta\cdot{n} }^{1/2}
      (I-\Pi_h) (\varphi w_h) }_F
    + \norm[0]{ \envert[0]{ \tfrac{1}{2} \beta\cdot{n} }^{1/2} \varphi (w_h-\varkappa_h) }_F
    +
    \del[0]{eT+\chi}
    \norm[0]{ \envert[0]{ \tfrac{1}{2} \beta\cdot{n} }^{1/2} \varkappa_h }_F
    \\
    &
    \le
    c
    \norm[0]{w_h}_{\mathcal{K}}
    +
    \del[0]{eT+\chi}
    \norm[0]{ \envert[0]{ \beta_s - \tfrac{1}{2} \beta\cdot{n} }^{1/2} (w_h-\varkappa_h) }_F
    +
    \del[0]{eT+\chi}
    \norm[0]{ \envert[0]{ \tfrac{1}{2} \beta\cdot{n} }^{1/2} \varkappa_h }_F.
  \end{split}
\end{equation*}
For a facet $F \in \partial\mathcal{E}_N$ we therefore conclude that
\begin{multline}
  \label{eq:singleboundFinEN}
  \norm[0]{ \envert[0]{ \tfrac{1}{2} \beta\cdot{n} }^{1/2} \Pi_h^{\mathcal{F}} \del[0]{\varphi\varkappa_h} }_F
  \le
  c
  \del[0]{eT+\chi}
  \norm[0]{ \del[0]{\beta_s - \tfrac{1}{2}\beta\cdot n}^{1/2}(\varkappa_h - w_h) }_F
  \\
  +
  c
  \norm[0]{w_h}_{\mathcal{K}}
  + \del[0]{eT+\chi}
  \norm[0]{ \envert[0]{ \tfrac{1}{2} \beta\cdot{n} }^{1/2} \varkappa_h }_F.
\end{multline}
We find for the Neumann term in the definition of $\tnorm{\cdot}_v$:
\begin{multline}
  \label{eq:boundFinEN}
  \sum_{F \in \partial\mathcal{E}_N}\norm[0]{ \envert[0]{ \tfrac{1}{2} \beta\cdot{n} }^{1/2}
    \Pi_h^{\mathcal{F}} \del[0]{\varphi\varkappa_h} }_F^2
  \le
  c
  \norm[0]{\beta}_{L^{\infty}(\mathcal{E})}
  \sum_{\mathcal{K} \in \mathcal{T}_h}
  \norm[0]{w_h}_{\mathcal{K}}^2
  \\
  +
  \del[0]{eT+\chi}^2
  \del[1]{
    \sum_{\mathcal{K} \in \mathcal{T}_h}
    \norm[0]{
      \del[0]{
        \beta_s - \tfrac{1}{2}\beta\cdot n
      }^{1/2}
      (\varkappa_h - w_h)
    }_{\partial \mathcal{K}}^2
    + \sum_{F \in \partial \mathcal{E}_N}
    \norm[0]{
      \envert[0]{ \tfrac{1}{2} \beta\cdot{n} }^{1/2}
      \varkappa_h
    }_F^2
  }.
\end{multline}
Finally, we consider the advective facet terms in the definition of
$\tnorm{\cdot}_v$. On a single facet we have:
\begin{equation*}
  \norm[0]{
    \envert[0]{\beta_s-\tfrac{1}{2}\beta\cdot{n}}^{1/2}
    \del[0]{
      \Pi_h\del[0]{\varphi w_h}
      -
      \Pi_h^{\mathcal{F}}
      \del[0]{\varphi\varkappa_h}
    }
  }_{F}
  \le
  c
  (
  \max_{(t,x)\in F} |\beta\cdot n|)^{1/2}
  \norm[0]{
    \del[0]{
      \Pi_h\del[0]{\varphi w_h}
      -
      \Pi_h^{\mathcal{F}}
      \del[0]{\varphi\varkappa_h}
    }
  }_{F}.
\end{equation*}
Using identical steps as used to find the bound for $I$ in
\cref{eq:termIbound}, we find:
\begin{multline*}
  \norm[0]{
    \envert[0]{\beta_s-\tfrac{1}{2}\beta\cdot{n}}^{1/2}
    \del[0]{
      \Pi_h\del[0]{\varphi w_h}
      -
      \Pi_h^{\mathcal{F}}
      \del[0]{\varphi\varkappa_h}
    }
  }_{F}
  \\
  \le
  c
  \del[0]{eT+\chi}
  \norm[0]{ \del[0]{\beta_s - \tfrac{1}{2}\beta\cdot n}^{1/2}(\varkappa_h - w_h) }_F
  +
  c \norm[0]{w_h}_{\mathcal{K}},
\end{multline*}
so that
\begin{multline}
  \label{eq:FboundinAT}
    \sum_{\mathcal{K} \in \mathcal{T}_h}
    \norm[0]{
      \envert[0]{\beta_s-\tfrac{1}{2}\beta\cdot{n}}^{1/2}
      \del[0]{
        \Pi_h\del[0]{\varphi w_h}
        -
        \Pi_h^{\mathcal{F}}
        \del[0]{\varphi\varkappa_h}
      }
    }_{\partial \mathcal{K}}^2
    \\
    \le
    c \del[0]{eT+\chi}^2
    \sum_{\mathcal{K} \in \mathcal{T}_h}
    \norm[0]{ \del[0]{\beta_s - \tfrac{1}{2}\beta\cdot n}^{1/2}
      (\varkappa_h - w_h)
    }_{\partial\mathcal{K}}^2
    +
    c \del[1]{
      \sum_{\mathcal{K} \in \mathcal{T}_h}
      \norm[0]{w_h}_{\mathcal{K}}^2 }.
\end{multline}
The result follows after collecting the bounds in
\cref{eq:volumetermbound1,eq:diftermbound3,eq:boundFinEN,eq:FboundinAT}.

%------------------------------------------------------------------------------
\subsubsection*{Acknowledgements}

SR gratefully acknowledges support from the Natural Sciences and
Engineering Research Council of Canada through the Discovery Grant
program (RGPIN-05606-2015).

This research was enabled in part by support provided by Simon
Fraser University
(\url{https://www.sfu.ca/research/supercomputer-cedar}), Compute
Ontario (\url{https://www.computeontario.ca/}) and the Digital
Research Alliance of Canada (\url{https://alliancecan.ca}). We
furthermore acknowledge the support provided by the Math Faculty
Computing Facility at the University of Waterloo
(\url{https://uwaterloo.ca/math-faculty-computing-facility/}).

%------------------------------------------------------------------------------
\bibliographystyle{abbrvnat}
\bibliography{references}
%------------------------------------------------------------------------------
\end{document}